\newtheorem{thm}{Theorem}[section]
\newtheorem{Lem}{Lemma}[section]
\newtheorem{conjecture}{Conjecture}[section]
\newtheorem{defn}{Definition}[section]
\newtheorem{exam}{Example}[section]
\newtheorem{rem}{Remark}[section]
\begin{document}
	\begin{center}
		\LARGE {\bf Impartial Triangular Chocolate Bar Games}
	\end{center}
	
	\begin{center}
		\large Ryohei Miyadera\footnote{Kwansei Gakuin High School}, Shunsuke Nakamura\footnote{Osaka University}, Masanori Fukui\footnote{Hyogo University of Teacher Education}
	\end{center}

\begin{abstract}
	Chocolate bar games are variants of the game of Nim in which the goal is to leave your opponent with the single bitter part of the chocolate bar. The rectangular chocolate bar game is a thinly disguised form of classical multi-heap Nim. In this work, we investigate the mathematical structure of triangular chocolate bar games in which the triangular chocolate bar can be cut in three directions. 
	In the triangular chocolate bar game, a position is a $\mathcal{P}$-position if and only if $x \oplus y \oplus z = 0$, where the numbers $x,y,z$
	 stand for the maximum number of times that the chocolate bar can be cut in each direction. 
	Moreover, the Grundy number of a position $(x,y,z)$ is not always equal to $x \oplus y \oplus z $, and a generic formula for Grundy numbers in not known. Therefore, the mathematical structure of triangular chocolate bar game is different from that of classical Nim. 
\end{abstract}

\section{Introduction}\label{intro}
The original chocolate bar game \cite{robin} consists of square boxes in which one square is blue and other squares are brown.
Brown squares are sweet, and the blue square is considered too bitter to eat. For example, see Figure \ref{robinchoco1}.
Each player takes turns breaking the bar in a straight line along the grooves and eating the piece that does not contain the bitter part. The player who breaks the chocolate bar and leaves his opponent with the single bitter blue square is the winner.
Since the horizontal and vertical grooves are independent, the rectangular chocolate bar of Figure \ref{robinchoco1} is equivalent to a game of Nim with three heaps of 3 stones, 7 stones and 4 stones. Therefore, a rectangular chooclate bar game is mathematically the same as the game of Nim \cite{bouton}.
	
We have previously studied chocolate bar games such as those shown in Figure \ref{demochocolate4} and Figure \ref{2yzchoco1} in \cite{integer2015}.
In these chocolate bar games, a vertical break can
reduce the number of horizontal breaks, and the mathematical structure of these games is different from that of classical Nim and rectangular chocolate bar games. We can still consider the game as being
played with heaps, but now a single move may change more than one heap.
We have also studied chocolate bars games such as those shown in Figure \ref{demochocolate4} and Figure \ref{2yzchoco1} with a pass move in \cite{integer2016}.

There are other types of chocolate bar games. One of the most well known chocolate bar games is CHOMP \cite{gale}, which uses a rectangular chocolate bar. The players take turns to
choose one block, and the players eat this block together with the blocks below it and to its right. The top left block is bitter, and the players cannot eat this block. Although many people have studied this game, the winning strategy has yet to be discovered. 

In this study, we consider triangular chocolate bar games such as that shown in Figure \ref{demochocolate3}.
A triangular chocolate bar can be cut in three directions. We previously studied a simple type of triangular chocolate bar game in \cite{ipsj1}, and 
the results of \cite{ipsj1} are generalized in this article.

In Section \ref{sectionforcomputer}, Mathematica programs and CGSuite programs are presented. By these programs, readers can check the results of this article with their own computers.

	\begin{exam}
		Examples of chocolate bar games.

\begin{figure}[!htb]
	\begin{minipage}[!htb]{0.45\columnwidth}
		\centering
		\includegraphics[width=0.5\columnwidth,bb=0 0 260 292]{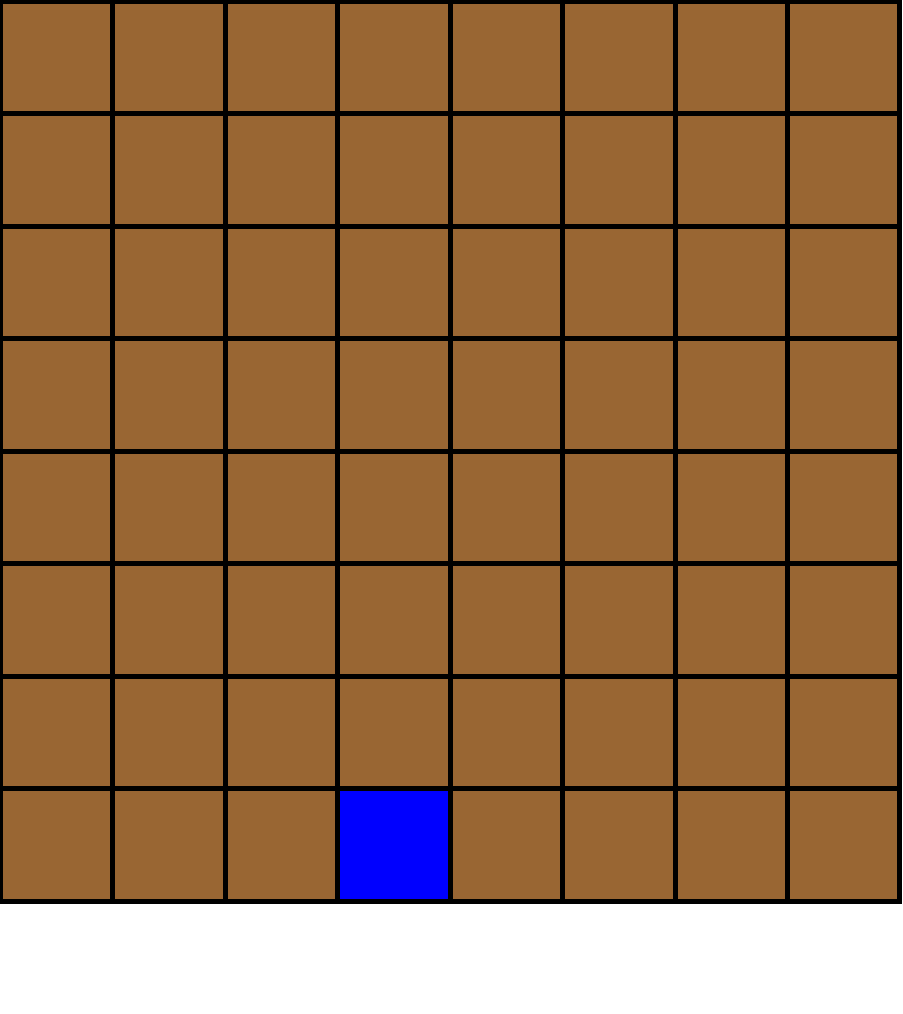}
		\caption{example (1)}
		\label{robinchoco1}
	\end{minipage}
	\begin{minipage}[!htb]{0.45\columnwidth}
		\centering
		\includegraphics[width=0.8\columnwidth,bb=0 0 260 58]{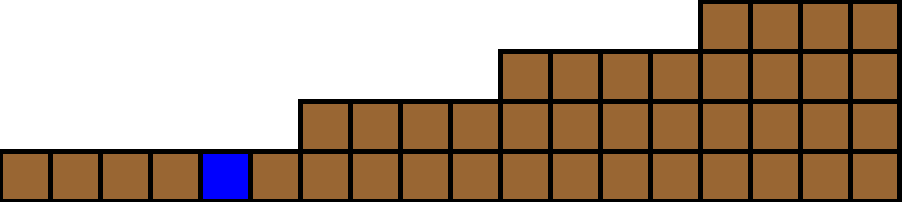}
		\caption{example (2)}
		\label{demochocolate4}
	\end{minipage}
\end{figure}
	
\begin{figure}[!htb]
	\begin{minipage}[!htb]{0.45\columnwidth}
		\centering
		\includegraphics[width=0.7\columnwidth,bb=0 0 260 93]{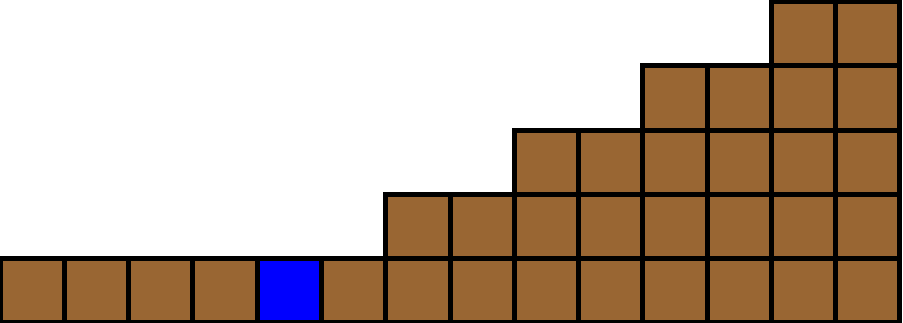}
		\caption{example (3)}
		\label{2yzchoco1}
	\end{minipage}
	\begin{minipage}[!htb]{0.45\columnwidth}
		\centering
		\includegraphics[width=0.6\columnwidth,bb=0 0 116 57]{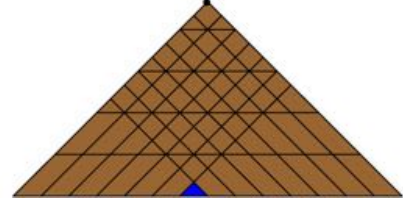}
		\caption{example (4)}
		\label{demochocolate3}
	\end{minipage}
\end{figure}							
\end{exam}

\section{Definitions and Theorems of Game Theory}
Throughout this study, we denote the set of non-negative integers by $Z_{\geq0}$, and $N$ is the set of natural numbers.
For completeness, we quickly review the necessary game theory concepts used in this study; see \cite{lesson} or \cite{combysiegel} for more details. 

As chocolate bar games are impartial games without draws, there will only be two outcome classes.
\begin{defn}\label{NPpositions}
 	$(i)$ $\mathcal{N}$-positions are positions from which the next player can force a win, as long as he plays correctly at every stage.\\
 	$(ii)$ $\mathcal{P}$-positions are positions from which the previous player (the player who will play after the next player) can force a win, as long as he plays correctly at every stage.
\end{defn}

The outcome of this game is not pre-determined; however, there is nothing that the potential loser can do if the potential winner plays correctly at ever stage. The potential winner cannot afford to make a single mistake, or his opponent can exploit the mistake and win the game.

One of the most important aims in the study of chocolate bar games is the identification of all $\mathcal{P}$-positions and $\mathcal{N}$-positions. 

\begin{defn}\label{sumofgames}
 The \textit{disjunctive sum} of two games, denoted by $G+H$, is a super-game in which a player may move either in $G$ or $H$, but not in both.
\end{defn}

\begin{defn}
For any position $\mathbf{p}$, there exists a set of positions that can be reached by making precisely one move from $\mathbf{p}$, which we will denote by \textit{move}$(\mathbf{p})$. 
\end{defn}

Examples \ref{chocoexmp1} and \ref{defofmovek} demonstrate the use of \textit{move}.

\begin{defn}\label{defofmexgrundy}
	$(i)$ The \textit{minimum excluded value} $(\textit{mex})$ of a set, $S$, of non-negative integers is the smallest non-negative integer not in S. \\
	$(ii)$ Each position $\mathbf{p}$ of an impartial game has an associated Grundy number, which is denoted by $\mathcal{G}(\mathbf{p})$.
	The Grundy number of the end position is $0$, and the Grundy number is found recursively for all other positions: 
	$\mathcal{G}(\mathbf{p}) = \textit{mex}\{\mathcal{G}(\mathbf{h}): \mathbf{h} \in move(\mathbf{p})\}.$
\end{defn}

The power of the Sprague--Grundy theory for impartial games is contained in the following theorem.

\begin{thm}\label{thmforsumofgame}
	Let $G$ and $H$ be impartial games, and let $\mathcal{G}_{G}$ and $\mathcal{G}_{H}$ be the Grundy numbers of $G$ and $H$, respectively. Then, the following relationships hold:\\
	$(i)$ For any position $\mathbf{g}$ of $G$ we have
	$\mathcal{G}_{G}(\mathbf{g})=0$ if and only if $\mathbf{g}$ is a $\mathcal{P}$-position.\\
	$(ii)$ The Grundy number of a position $\{\mathbf{g},\mathbf{h}\}$ in the game $G+H$ is
	$\mathcal{G}_{G}(\mathbf{g})\oplus \mathcal{G}_{H}(\mathbf{h})$.
\end{thm}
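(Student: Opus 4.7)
The plan is to prove both parts by induction on the position within the game tree (using the well-foundedness of the move relation, which holds since impartial games here end in finitely many moves).

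For part (i), the base case is an end position: it has no legal move, so the next player loses, making it a $\mathcal{P}$-position, while $\mathcal{G}(\mathbf{g}) = \mathit{mex}(\emptyset) = 0$. For the inductive step, assume the equivalence holds for every position in $\mathit{move}(\mathbf{g})$. If $\mathcal{G}(\mathbf{g}) = 0$, the mex definition forces every option to have nonzero Grundy number, hence by induction every option is an $\mathcal{N}$-position, so $\mathbf{g}$ is a $\mathcal{P}$-position. Conversely, if $\mathcal{G}(\mathbf{g}) > 0$, then $0$ lies in the mex-set, meaning some option has Grundy $0$ and by induction is a $\mathcal{P}$-position, so $\mathbf{g}$ is an $\mathcal{N}$-position.

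For part (ii), set $a = \mathcal{G}_{G}(\mathbf{g})$, $b = \mathcal{G}_{H}(\mathbf{h})$, $c = a \oplus b$, and induct on the rank of $(\mathbf{g}, \mathbf{h})$ in the sum. I need to verify that the mex of Grundy values of options equals $c$, which splits into two claims. \textbf{No option has Grundy value $c$:} a move in $G+H$ plays in a single summand, say $G$, producing $(\mathbf{g}', \mathbf{h})$ whose Grundy value, by induction, equals $\mathcal{G}_{G}(\mathbf{g}') \oplus b$; if this equaled $c = a \oplus b$, then $\mathcal{G}_{G}(\mathbf{g}') = a$, contradicting the mex definition of $a = \mathcal{G}_{G}(\mathbf{g})$. \textbf{Every $d < c$ is achieved:} let $e = c \oplus d$ and let $k$ be the position of the leading bit of $e$; then bit $k$ of $c$ is $1$ and bit $k$ of $d$ is $0$. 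Since $c = a \oplus b$ has a $1$ in bit $k$, either $a$ or $b$ does; WLOG say $a$ does. Then $a \oplus e < a$ (they agree above bit $k$ and $a$'s bit $k$ is turned off), so by the mex property of $a$ there is an option $\mathbf{g}' \in \mathit{move}(\mathbf{g})$ with $\mathcal{G}_{G}(\mathbf{g}') = a \oplus e$; the resulting sum position has Grundy value $(a \oplus e) \oplus b = c \oplus e = d$ by the inductive hypothesis.

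I expect part (i) to be essentially automatic from the definitions, so the real work lies in part (ii). The main obstacle, and the clever step, is the nim-addition argument establishing that every $d < c$ is realized: extracting the right summand to move in by looking at the most significant differing bit of $c$ and $d$, and then using the mex characterization of $a$ (or $b$) to land on the required Grundy value. Once this bit-manipulation is carried out, the induction closes cleanly and the rest is bookkeeping.
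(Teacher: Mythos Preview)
Your argument is correct and is essentially the standard proof of the Sprague--Grundy theorem. However, the paper does not prove this theorem at all: it simply states it and refers the reader to \cite{lesson} (Albert, Nowakowski, Wolfe, \emph{Lessons in Play}) for a proof. So there is no comparison to make beyond noting that what you have written is precisely the textbook argument one would find in that reference.
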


Please see \cite{lesson} for a proof of this theorem.

Finally, we define nim-sum, which is important for the theory of chocolate bar games.

\begin{defn}\label{definitionfonimsum11}
	Let $x,y$ be non-negative integers written in base $2$ so that $x = \sum\limits_{i = 0}^n {{x_i}} {2^i}$ and $y = \sum\limits_{i = 0}^n {{y_i}} {2^i}$ with ${x_i},{y_i} \in \{0,1\}$.\\
	We define the nim-sum $x \oplus y$ by
	\begin{align}
		x \oplus y = \sum\limits_{i = 0}^n {{w_i}} {2^i},
	\end{align}
	where $w_{i}=x_{i}+y_{i} \ (mod\ 2)$.
\end{defn}

When we use $ \sum\limits_{i = 0}^n {{x_i}} {2^i}$ and $y = \sum\limits_{i = 0}^n {{y_i}} {2^i}$, we assume that at least one $x_n$ and $y_n$ term is not zero.

\section{Rectangular Chocolate Bar Games}\label{rectangle}
We first define rectangular chocolate bar games. Please consult the chocolate bar in Figure \ref{robinchoco} as examples for definitions \ref{defofgeneralchoco} and \ref{defofchocoandcoordinate}. 

\begin{defn}\label{defofgeneralchoco}
The chocolate bar consists of square boxes, where one block is blue and the others are brown.
Brown blocks are sweet, and the blue block is considered too bitter to eat. This game is played by two players in turn. 
Each player breaks the chocolate (along a black line) into two areas.
The player eats the area that does not contain the bitter blue block. The player who breaks the chocolate and leaves his opponent with the single bitter blue block is the winner.
\end{defn}

\begin{exam}
The chocolate bars shown in Figure \ref{robinchoco} and Figure \ref{robinchocog} were proposed by Robin \cite{robin}.

\begin{figure}[!htb]
	\begin{minipage}[!htb]{0.45\columnwidth}
		\centering
		\includegraphics[width=0.6\columnwidth,bb=0 0 260 292]{robinchoco3.pdf}
		\caption{Position (3,7,4)}
		\label{robinchoco}
	\end{minipage}
	\begin{minipage}[!htb]{0.45\columnwidth}
		\centering
		\includegraphics[width=0.6\columnwidth,bb=0 0 170 153]{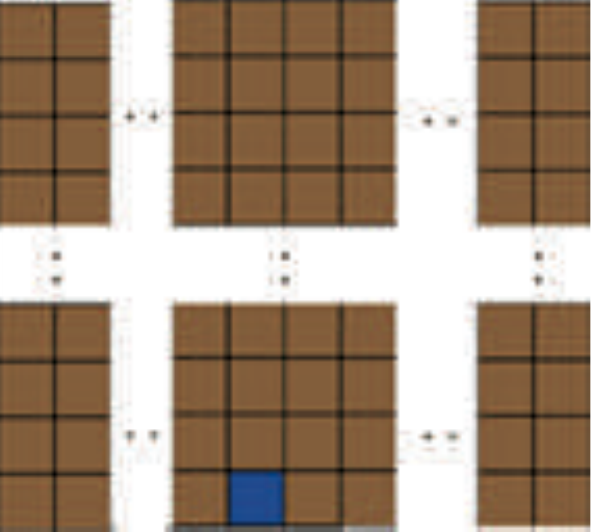}
		\caption{Position (x,y,z)}
		\label{robinchocog}
	\end{minipage}	
\end{figure}

\end{exam}
	
\begin{defn}\label{defofchocoandcoordinate}
	We can cut these chocolates along the segments in three ways:\\
	$(i)$ vertically on the left side of the bitter blue block;\\
	$(ii)$ horizontally above the bitter blue block; and\\
	$(iii)$ vertically on the right side of the bitter blue block.\\
	Therefore, this chocolate bar can be represented with $(x,y,z)$, where $x,y,z$ stand for the maximum number of times the chocolate bar can be cut in each direction. 
\end{defn}
	
\begin{rem}
	For example, in Figure\ref{robinchoco}, we can make at most three vertical cuts on the left side of the bitter blue block, seven horizontal cuts above the bitter blue block, and four vertical cuts on the right side of the bitter blue block. Therefore, we have $x=3$, $y = 7$, and $z=4$, and we represent the chocolate bar in Figure \ref{robinchoco} with the position $(3,7,4)$.
	We can make chocolates as large as we want. For example, the chocolate bar in Figure \ref{robinchocog} has position $(x,y,z)$, where $x,y,z \in Z_{\geq0} $.
	In Definition \ref{defofchocoandcoordinate}, we introduce the three ways to cut rectangular chocolates.
	Example \ref{chocoexmp1} shows how to cut chocolates, and we define how to cut chocolate mathematically in Definition \ref{defofmoverect}. 
\end{rem}

\begin{rem}
	Note that, in Definition \ref{defofgeneralchoco}, we are not considering a mis\`ere play since the player who breaks the chocolate bar for the last time is the winner. Therefore, the chocolate bar games in this paper are normal play games.
\end{rem}

\begin{defn}\label{defofmoverect}
	For $x,y,z \in Z_{\ge 0}$, we define $move((x,y,z))=\{(u,y,z);0 \leq u<x) \cup \{(x,v,z);0 \leq v<y) \cup \{(x,y,w);0 \leq w<z\}$, where $u,v,w \in Z_{\ge 0}$.
\end{defn}

	Here, $move((x,y,z))$ is the set of states that can be directly reached from the state $(x,y,z)$.
	
\begin{exam}\label{chocoexmp1}
	$(i)$ If we start with the chocolate bar in Figure \ref{robinchoco} and cut vertically to remove three columns to the right of the blue part, we get the chocolate bar in Figure \ref{robinchoco31}.
	Then, from the chocolate bar in Figure \ref{robinchoco}, we cut horizontally to remove the seven rows above the blue part, and we get the chocolate bar in Figure \ref{robinchocol1}.
	Therefore, $(3,7,1)$, $(3,0,4)$$\in move((3,7,4))$.
	
	\begin{figure}[!htb]
		\begin{minipage}[!htb]{0.45\columnwidth}
			\centering
	\includegraphics[width=0.6\columnwidth,bb=0 0 176 279]{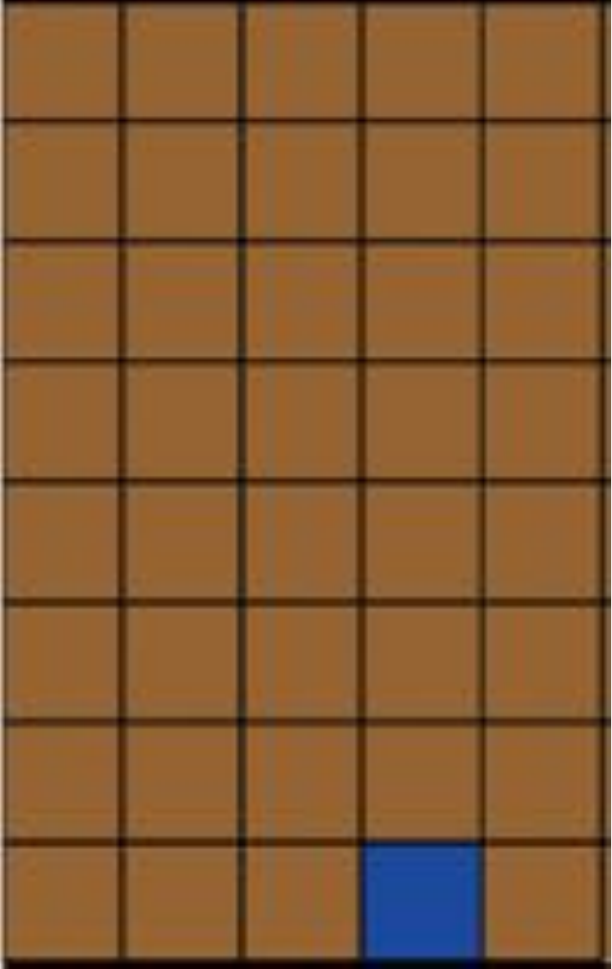}
	\caption{Position (3,7,1)}
		\label{robinchoco31}
		\end{minipage}
		\begin{minipage}[!htb]{0.45\columnwidth}
			\centering
	\includegraphics[width=0.6\columnwidth,bb=0 0 57 43]{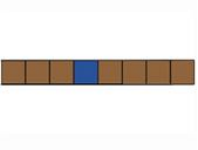}
		\caption{Position (3,0,4)}
		\label{robinchocol1}
		\end{minipage}
	\end{figure}
		
\noindent
$(ii)$ If we start with the chocolate bar in Figure \ref{robinchoco}, we cannot directly go to the chocolates in Figures \ref{robinchoco32}, \ref{robinchoco3l2}, and \ref{robinchoco33}. Therefore, $(3,2,1)$, $(0,4,4)$, $(0,0,0) \notin move((3,7,4))$.
	
\noindent
On the other hand, we can move directly from Figure \ref{robinchoco31} to Figure \ref{robinchoco32} by removing five rows at the same time. Therefore, $(3,2,1) \in move((3,7,1))$.

\begin{figure}[!htb]
\centering
\includegraphics[width=0.05\columnwidth,bb=0 0 35 176]{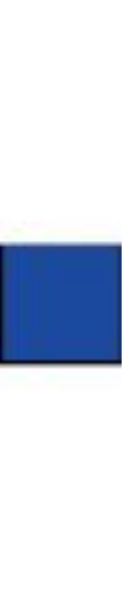}
\vspace{-10mm}
\caption{Position (0,0,0)}
\label{robinchoco33}
\end{figure}

\begin{figure}[!htb]
	\begin{minipage}[!htb]{0.45\columnwidth}
		\centering
		\includegraphics[width=0.6\columnwidth,bb=0 0 175 107]{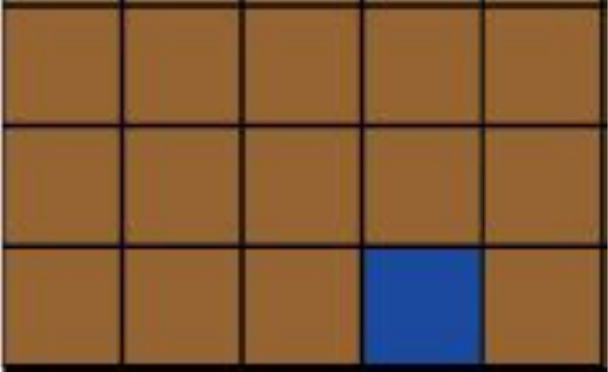}
		\caption{Position (3,2,1)}
		\label{robinchoco32}
	\end{minipage}
	\begin{minipage}[!htb]{0.45\columnwidth}
		\centering
		\includegraphics[width=0.4\columnwidth,bb=0 0 113 115]{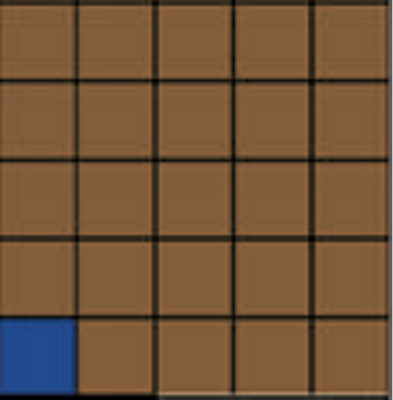}
		\caption{Position (0,4,4)}
		\label{robinchoco3l2}
	\end{minipage}
\end{figure}
\end{exam}
	
\begin{rem}
	It is easy to prove that the chocolate bar in Figure \ref{robinchoco3l2} is a $\mathcal{P}$-position. Note that the numbers of rows and columns are the same in the initial chocolate bar. With the first move, the number of columns will be different from the number of the rows. Then, the opposing player can break the bar to make the numbers of rows and columns the same. In this way, the opposing player will always keep the numbers of rows and columns the same, winning the game by moving to the single bitter block of Figure \ref{robinchoco33} that is represented by the position $(0,0,0)$ . 
\end{rem}

\begin{thm}
A position $(x,y,z)$ of a rectangular chocolate bar is a $\mathcal{P}$-position if and only if $x \oplus y \oplus z = 0$.
\end{thm}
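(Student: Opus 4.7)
The plan is to reduce this theorem to the classical Bouton theorem for three-heap Nim by exhibiting the rectangular chocolate bar game as a disjunctive sum of three one-pile Nim games. From Definition \ref{defofmoverect}, a move from $(x,y,z)$ replaces exactly one of the three coordinates by a strictly smaller non-negative integer, with the other two coordinates left untouched. This is precisely the move structure of the disjunctive sum $N_x + N_y + N_z$, where each $N_t$ is a single Nim pile of size $t$, whose Grundy number is $t$ (an easy induction using \textit{mex}).

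With this identification, I would invoke Theorem \ref{thmforsumofgame}$(ii)$ to conclude that the Grundy number of $(x,y,z)$ equals $x \oplus y \oplus z$, and then apply Theorem \ref{thmforsumofgame}$(i)$ to conclude that $(x,y,z)$ is a $\mathcal{P}$-position precisely when $x \oplus y \oplus z = 0$.

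If a self-contained direct argument is preferred over the disjunctive-sum machinery, I would instead induct on $x+y+z$ and verify the two standard Bouton conditions: (a) from any position with $x \oplus y \oplus z = 0$, every legal move breaks the nim-sum, because modifying a single coordinate $t$ to a different value $t'$ changes $x \oplus y \oplus z$ by $t \oplus t' \neq 0$; and (b) from any position with $s := x \oplus y \oplus z \neq 0$, one can move to a position with nim-sum $0$, by locating the highest bit of $s$, choosing a coordinate (say $x$) whose binary expansion has a $1$ in that bit, and replacing $x$ by $x \oplus s$, which is strictly smaller than $x$ and hence a legal move.

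Neither route presents a serious obstacle; the only subtle step is the standard Bouton verification that $x \oplus s < x$ when the chosen coordinate contains the leading bit of $s$, which follows because cancellation at that leading bit strictly decreases the binary value regardless of the lower-order bits. The result then follows by strong induction on $x+y+z$, with the terminal position $(0,0,0)$ trivially a $\mathcal{P}$-position whose nim-sum is $0$.
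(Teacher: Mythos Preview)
Your proposal is correct. Both routes you outline---the disjunctive-sum reduction via Theorem~\ref{thmforsumofgame} and the direct Bouton induction---are standard and sound, and either would constitute a complete proof.

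As for comparison with the paper: the paper does not actually prove this theorem. It simply writes ``For the proof of this theorem, please see \cite{robin}.'' So your proposal is strictly more than what the paper provides. Your first route (identifying the rectangular bar game with $N_x + N_y + N_z$ via Definition~\ref{defofmoverect} and applying Sprague--Grundy) is exactly the observation the paper makes informally in Section~\ref{intro} when it says the rectangular bar ``is equivalent to a game of Nim with three heaps,'' but you carry it through to a proof rather than leaving it as a remark. Your second route is Bouton's original 1901 argument, which is presumably what \cite{robin} or \cite{bouton} contains. Either is appropriate here; the disjunctive-sum version has the advantage of reusing machinery (Theorem~\ref{thmforsumofgame}) already stated in the paper.
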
	

For the proof of this theorem, please see \cite{robin}.

\section{Triangular Chocolate Bar Games}\label{introforkyxzgame}
Herein, we assume that $k$ is a fixed natural number such that $k = 4m+3$ for some $m \in Z_{\geq0}$.
In this section, we study triangular chocolate bar games.
Examples of triangular chocolate bars are presented in Example \ref{exampleoftrichoco}.

\begin{exam}\label{exampleoftrichoco} Triangular chocolate bars are shown in Figure \ref{choco747a} and Figure \ref{choco929a}.
These chocolate bars consist of polygons, where only one triangle is blue and other polygons are brown.
Brown polygons are sweet, and the blue triangle is considered too bitter to eat. 

A triangular chocolate bar can be cut in three ways: $(i)$ diagonally from the upper right to the lower left above the blue triangle; $(ii)$ horizontally above the blue triangle; or $(iii)$ diagonally from the upper left to the lower right above the blue triangle.
In Figure \ref{choco747a}, the numbers $7,4,7$ represent the maximum number of times we can cut the chocolate bar directions $(i)$, $(ii)$, and $(iii)$, respectively; in 
Figure \ref{choco929a}, the numbers $9,2,9$ represent the maximum number of times we can cut the chocolate bar directions $(i)$, $(ii)$, and $(iii)$, respectively.	

\begin{figure}[!htb]
	\begin{minipage}[!htb]{0.45\columnwidth}
		\centering
		\includegraphics[width=0.7\columnwidth,bb=0 0 222 113]{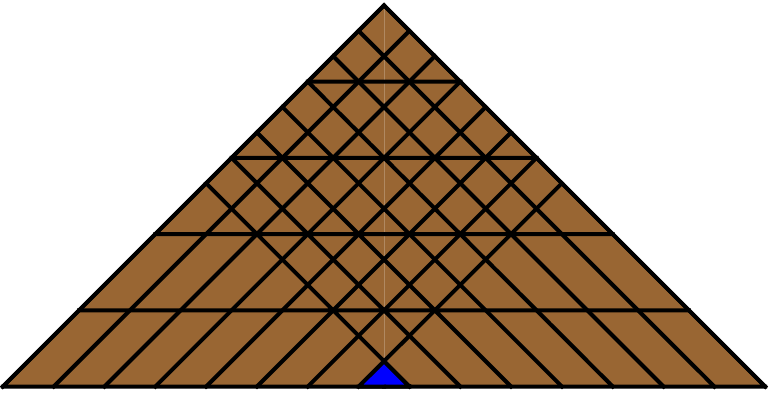}
		\caption{Position $(7,4,7)$}
		\label{choco747a}
	\end{minipage}
\begin{minipage}[!htb]{0.05\columnwidth}
	~
\end{minipage}	
	\begin{minipage}[!htb]{0.45\columnwidth}	
		\centering
		\includegraphics[width=0.7\columnwidth,bb=0 0 228 115]{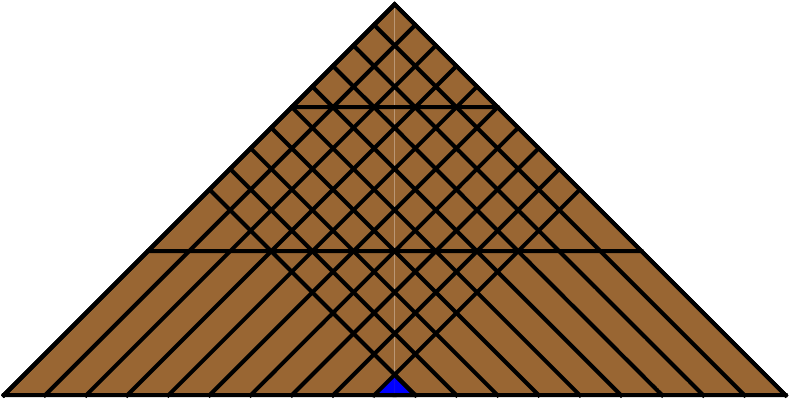}
		\caption{Position $(9,2,9)$}
		\label{choco929a}
		\end{minipage}
\end{figure}
	
\end{exam} 

We more precisely define the coordinates and cuts available in triangular chocolate bars in Definition \ref{definitionoflines} and 
Definition \ref{definitionofchoco}.

\begin{defn}\label{definitionoflines}
In the $x$ and $y$ coordinate system, we draw three groups of lines:
$(i)$ $y = x+1+2r$ for $r \in Z_{\geq0}$.\\
$(ii)$ $y = ks$ for $s \in N$.\\
$(iii)$ $y = -x+1+2t$ for $t \in Z_{\geq0}$.
\end{defn} 

The lines defined in Definition \ref{definitionoflines} are shown in Figure \ref{keq3lines} and 
Figure \ref{keq5lines} for $k=3$ and $k=7$, respectively.

\begin{figure}[!htb]
\begin{minipage}[!htb]{0.45\columnwidth}
	\centering
\includegraphics[width=0.7\columnwidth,bb=0 0 260 137]{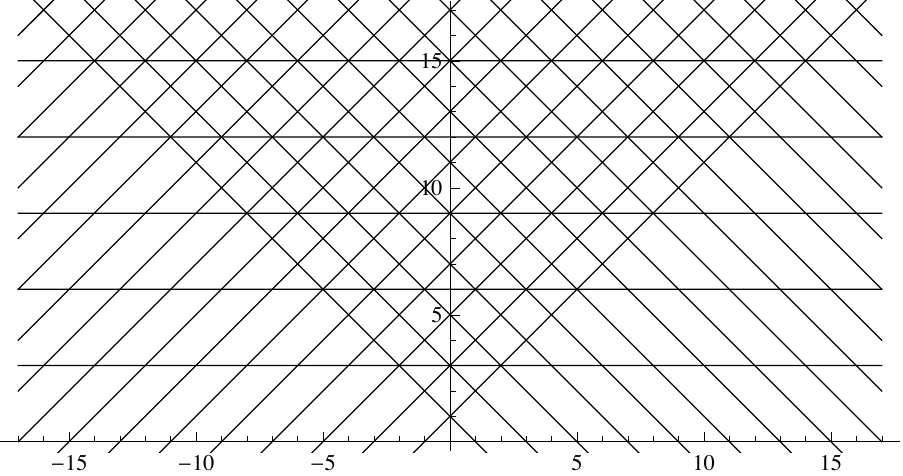}
\caption{lines for $k=3$}
\label{keq3lines}
\end{minipage}
\begin{minipage}[!htb]{0.05\columnwidth}
	~
\end{minipage}	
\begin{minipage}[!htb]{0.45\columnwidth}
	\centering
\includegraphics[width=0.7\columnwidth,bb=0 0 260 160]{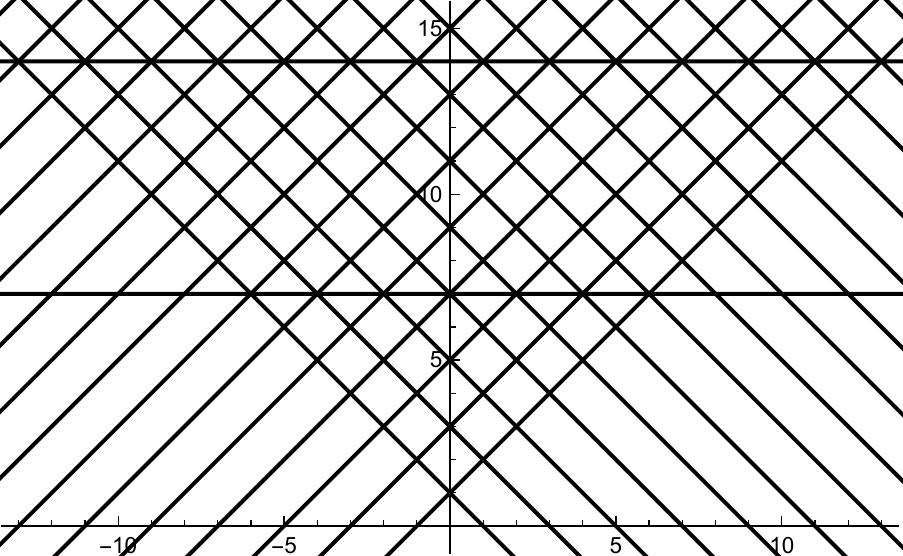}
\caption{lines for $k=7$}
\label{keq5lines}
\end{minipage}
\end{figure}

\begin{defn}\label{definitionofchoco}
Let $u,v,w \in Z_{\geq0}$ such that $kv \leq u+w$.\\
We denote the area of the chocolate bar described by the following four inequalitiesas position $(u,v,w)$:\\
$(a)$ $y \leq x+1+2u$;\\
$(b)$ $y \leq k(v+1)$;\\
$(c)$ $y \leq -x+1+2w$; and\\
$(d)$ $y \geq 0$.\\
The areas denoted by $(u,v,w)$ are colored brown, for except the triangular area defined by 
$y \leq x+1, y \leq -x+1$ and $ y \geq 0$, which is colored blue.

This chocolate bar game is played by two players in turn. 
Each player breaks the chocolate bar (along a black line) into two areas. 
The player eats the area that does not contain the bitter blue triangle. The player who breaks the chocolate bar and leaves his opponent with the single bitter blue triangle is the winner.	
The three numbers $u,v,w$ represent the maximum number of times we can cut the chocolate bar each direction.
\end{defn}

\begin{exam}\label{examkequal3}
Let $k=3$. Here, we present examples of triangular chocolate bars of position $(u,v,w)$ for $u,v,w \in Z_{\ge 0}$
such that $kv \leq u + w$.

\noindent
$(i)$ Let $(u,v,w) =(7,4,7)$. Then, by inequalities in $(a)$, $(b)$, $(c)$, and $(d)$ of Definition \ref{definitionofchoco}, 
 we have the following inequalities:
 
\begin{figure}[!htb]
	\centering
\includegraphics[width=0.45\columnwidth,bb=0 0 260 144]{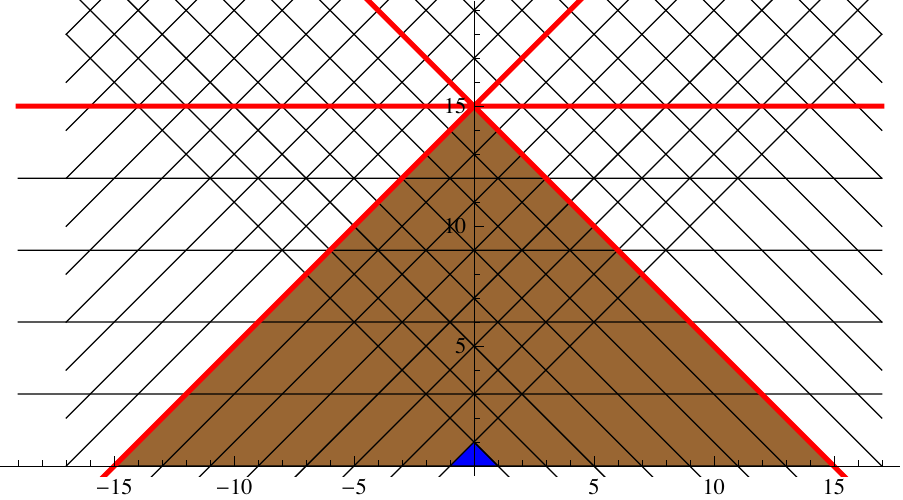}	
\caption{Position $(7,4,7)$}
\label{demok31}	
\end{figure}

\noindent
$(a.1)$ $y \leq x + 15$;\\
$(b.1)$ $y \leq 15$;\\
$(c.1)$ $y \leq -x+15$; and\\
$(d.1)$ $y \geq 0$.\\
Lines $y = x + 15$, $y = 15$, and $y = -x+15$ are represented by red lines in Figure \ref{demok31}. 
Here, we can omit inequality $(b.1)$ since the inequalities in $(a.1)$ and $(c.1)$ imply inequality $(b.1)$.\\	
It is easy to see that the three numbers $7,4,7$ represent the maximum number of times we can cut the chocolate bar each direction.

\noindent
$(ii)$ Let $(u,v,w) =(5,4,7)$. Then, by the inequalities in $(a)$, $(b)$, $(c)$, $(d)$ of Definition \ref{definitionofchoco},
 we have the following inequalities: 
 
\begin{figure}[!htb]
	\centering
\includegraphics[width=0.45\columnwidth,bb=0 0 260 169]{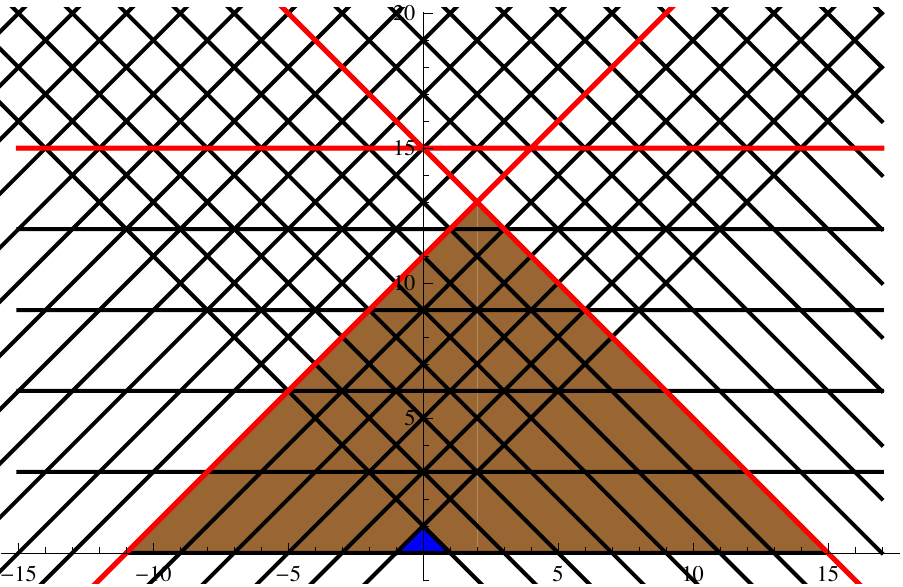}
\caption{Position $(5,4,7)$}
\label{demok32}
\end{figure}

\noindent
	$(a.2)$ $y \leq x+11$;
	
	\noindent
	$(b.2)$ $y \leq 15$;
	
	\noindent
	$(c.2)$ $y \leq -x+15$; and
	
	\noindent
	$(d.2)$ $y \geq 0$.
	
	\noindent
Lines $y = x+11$, $y = 15$ and $y = -x+15$	are represented by red lines in Figure \ref{demok32}.	
	Here, we can also omit inequality $(b.2)$ since the inequalities in $(a.2)$ and $(c.2)$ imply inequality $(b.2)$.

Next, we present an example that requires all inequalities in $(a)$, $(b)$, $(c)$, and $(d)$ of Definition \ref{definitionofchoco}.

\noindent
$(iii)$ Let $(u,v,w) =(4,2,7)$. Then, by the inequalities in $(a)$, $(b)$, $(c)$, $(d)$ of Definition \ref{definitionofchoco}, 
we have the following inequalities:

\begin{figure}[!htb]
	\centering
	\includegraphics[width=0.45\columnwidth,bb=0 0 260 115]{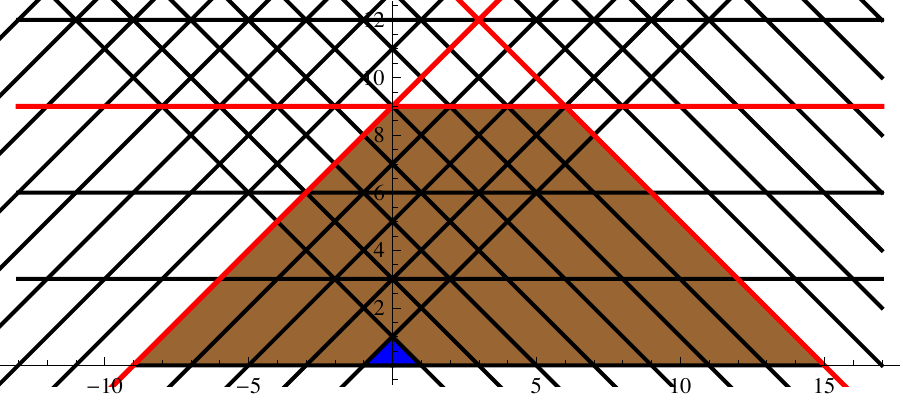}
	\caption{Position $(4,2,7)$}
	\label{demok33}
\end{figure}

\noindent
$(a.3)$ $y \leq x+9$;

\noindent
$(b.3)$ $y \leq 9$;

\noindent
$(c.3)$ $y \leq -x+15$; and

\noindent
$(d.3)$ $y \geq 0$.

\noindent
Lines $y = x+9$, $y=9$, and $y=-x+15$	are represented by red lines in Figure \ref{demok33}.	
Here, we cannot omit any inequalities.	
\end{exam}

\begin{exam}\label{examkequal7} Let $k =7$.
Here, we present examples of triangular chocolate bars of position $(u,v,w)$ for $u,v,w \in Z_{\ge 0}$
such that $kv \leq u + w$.

\noindent
$(i)$ Let $(u,v,w) =(9,2,9).$ Then, by the inequalities in $(a)$, $(b)$, $(c)$, and $(d)$ of Definition \ref{definitionofchoco}, 
we have the following inequalities: 

\begin{figure}[!htb]
	\centering
	\includegraphics[width=0.45\columnwidth,bb=0 0 260 148]{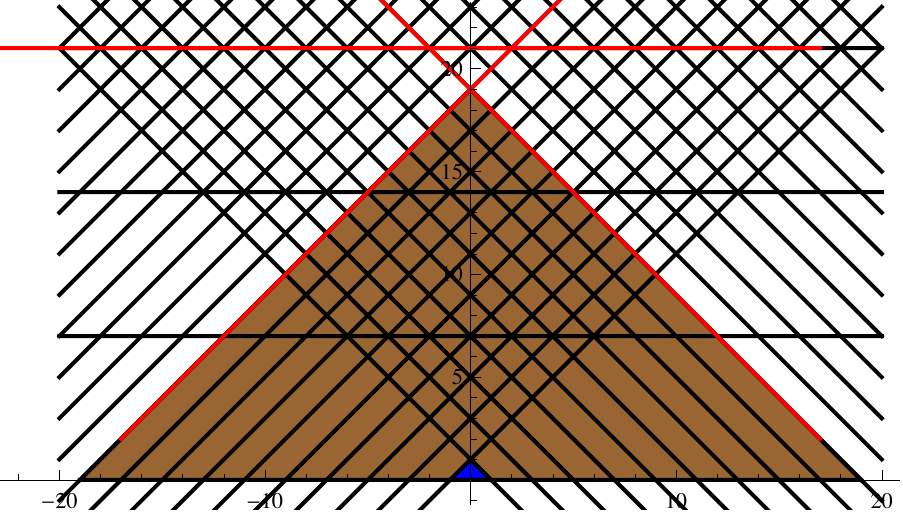}
	\caption{Position $(9,2,9)$}
	\label{demok51}
\end{figure}

\noindent
$(a.4)$ $y \leq x+19$;

\noindent
$(b.4)$ $y \leq 21$;

\noindent
$(c.4)$ $y \leq -x+19$; and 

\noindent
$(d.4)$ $y \geq 0$.

\noindent
Lines $y = x+19$, $y=21$, and $y=-x+19$	are represented by red lines in Figure \ref{demok33}.	
Here, we can omit inequality $(b.4)$ since the inequalities in $(a.4)$ and $(c.4)$ imply inequality $(b.4)$.		

\noindent
$(ii)$ Let $(u,v,w) =(6,2,8).$ Then, by the inequalities in $(a)$, $(b)$, $(c)$, and $(d)$ of Definition \ref{definitionofchoco}, we have the following inequalities:

\begin{figure}[!htb]
	\centering
	\includegraphics[width=0.45\columnwidth,bb=0 0 260 171]{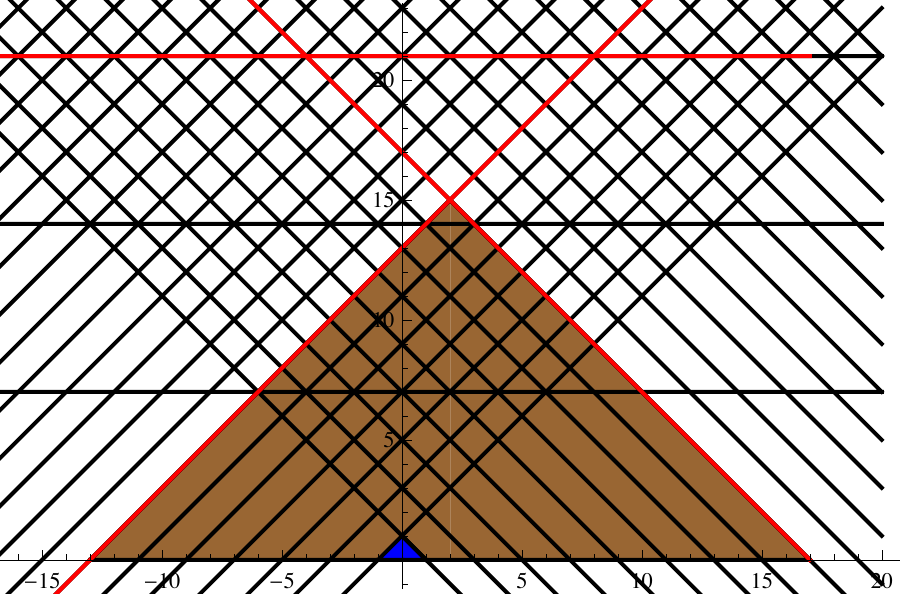}	
	\caption{Position $(6,2,8)$}
	\label{demok52}
\end{figure}

\noindent
$(a.5)$ $y \leq x+13$;

\noindent
$(b.5)$ $y \leq 21$;

\noindent
$(c.5)$ $y \leq -x+17$; and

\noindent
$(d.5)$ $y \geq 0$.

\noindent
Lines $y = x+13$, $y=21$, and $y=-x+17$	are represented by red lines in Figure \ref{demok52}.

\noindent	
Here, we can also omit inequality $(b.5)$ since the inequalities in $(a.5)$ and $(c.5)$ imply inequality $(b.5)$.

\noindent
$(iii)$ Let $(u,v,w) =(6,0,8)$. Then, by the inequalities in $(a)$, $(b)$, $(c)$, and $(d)$ of Definition \ref{definitionofchoco}, 
we have the following inequalities:	
	
\begin{figure}[!htb]
	\centering
\includegraphics[width=0.45\columnwidth,bb=0 0 260 83]{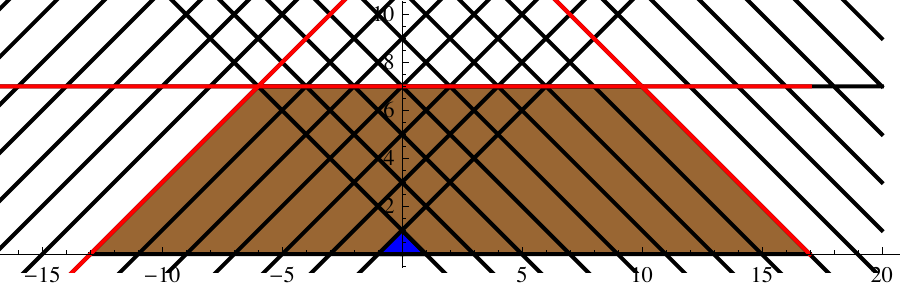}
\caption{Position $(6,0,8)$}
\label{demok53}	
\end{figure}

\noindent
$(a.6)$ $y \leq x+13$;

\noindent
$(b.6)$ $y \leq 7$;

\noindent
$(c.6)$ $y \leq -x+17$; and

\noindent
$(d.6)$ $y \geq 0$.	

Lines $y = x+13$, $y=7$, and $y=-x+17$	 are represented by red lines in Figure \ref{demok53}

\noindent		
Here, we cannot omit inequality $(b.6)$ since the inequalities in $(a.6)$ and $(c.6)$ do not imply inequality $(b.6)$.		
			
\end{exam}

In Definition \ref{definitionofchoco}, Example \ref{examkequal7}, and Example \ref{examkequal3}, we used the $x$ and $y$ coordinates to define the shape and position of the chocolate bar. However, in the remainder of this paper, we study chocolate bars and their positions without explicitly describing the coordinate system. 

\begin{exam}\label{howtocut}
Let $k=3$. Here, chocolates are presented with their positions $(x,y,z)$. 
These positions clearly satisfy the inequality 
$3y \leq x + z$, i.e., $y \leq \lfloor \frac{x+z}{3} \rfloor $, where $x,y,z$ are the first, second, and the third number of the position, respectively.
This example shows how to cut triangular chocolate bars.

\noindent
$(i)$ Suppose that we cut the chocolate bar in Figure \ref{chocok3747} to get the bar in Figure \ref{chocok3547} in the way demonstrated in Figure \ref{chocok3547cut}.
Starting with the chocolate bar in Figure \ref{chocok3747} that has the position $(x,y,z)=(7,4,7)$ and reducing $x$ to $5$, 
we have Figure \ref{chocok3547} with the position $(x,y,z)=(5,4,7)$.

\begin{figure}[!htb]
\begin{minipage}[!htb]{0.45\columnwidth}
	\centering
\includegraphics[width=0.7\columnwidth,bb=0 0 222 113]{chocok3747.pdf}
\caption{Position $(7,4,7)$}
\label{chocok3747}
\end{minipage}
\begin{minipage}[!htb]{0.45\columnwidth}
	\centering
\includegraphics[width=0.7\columnwidth,bb=0 0 193 98]{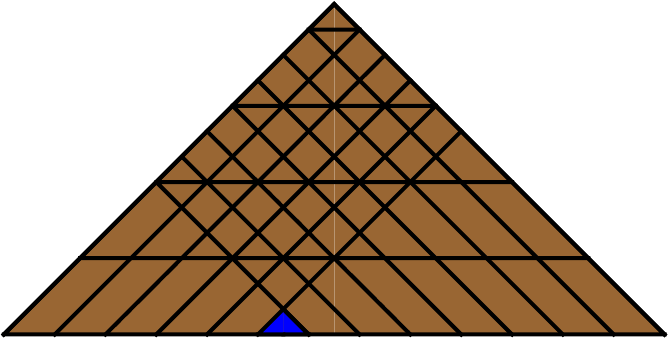}
\caption{Position $(5,4,7)$}
\label{chocok3547}
\end{minipage}
\end{figure}	

\begin{figure}[!htb]
	\centering
\includegraphics[width=0.45\columnwidth,bb=0 0 170 94]{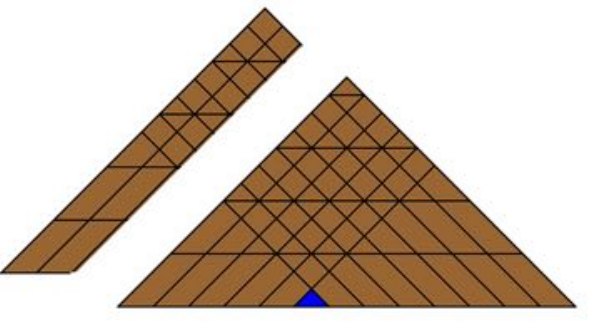}
\caption{cut the chocolate bar in Figure \ref{chocok3747}}	
\label{chocok3547cut}
\end{figure}

\noindent
$(ii)$ Suppose that we cut the chocolate bar in Figure \ref{chocok3747} to get the bar in Figure \ref{chocok3727} in the way demonstrated in Figure \ref{chococut727}.
Starting with the chocolate bar in Figure \ref{chocok3747} that has the position $(x,y,z)=(7,4,7)$ and reducing $y$ to $2$, 
 we have Figure \ref{chocok3727} with the position $(x,y,z)=(7,2,7)$.

\begin{figure}[!htb]
	\begin{minipage}[!htb]{0.45\columnwidth}
		\centering
\includegraphics[width=0.7\columnwidth,bb=0 0 260 84]{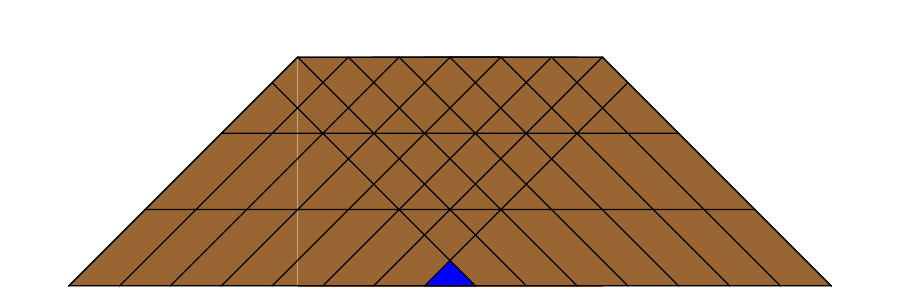}
\caption{Position $(7,2,7)$}
\label{chocok3727}
\end{minipage}
	\begin{minipage}[!htb]{0.45\columnwidth}
		\centering
\includegraphics[width=0.7\columnwidth,bb=0 0 170 94]{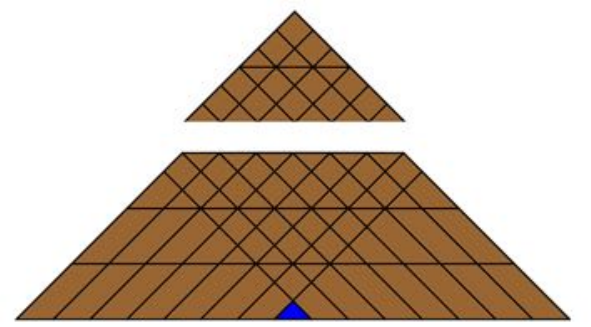}
\caption{Cut the chocolate bar in Figure \ref{chocok3727}}
\label{chococut727}
\end{minipage}
\end{figure}

\noindent
$(iii)$ Suppose that we cut the chocolate bar in Figure \ref{chocok3747} to get the chocolate in Figure \ref{chocok3732} in the way demonstrated in Figure \ref{chococut}.
Starting with the chocolate bar in Figure \ref{chocok3747} that has the position $(x,y,z)=(7,4,7)$ and reducing $z$ to $2$, 
the second number $y=4$ of the position will also be reduced to $y=3$. Therefore, we have Figure \ref{chocok3732} with the position $(x,y,z)=(7,3,2)$.

\begin{figure}[!htb]
	\begin{minipage}[!htb]{0.45\columnwidth}
		\centering
		\includegraphics[width=0.8\columnwidth,bb=0 0 260 84]{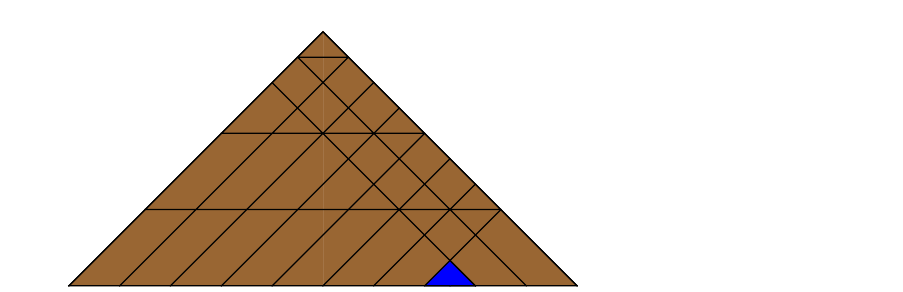}	
		\caption{Position $(7,3,2)$}
		\label{chocok3732}
	\end{minipage}
	\begin{minipage}[!htb]{0.45\columnwidth}
		\centering
		\includegraphics[width=0.8\columnwidth,bb=0 0 113 71]{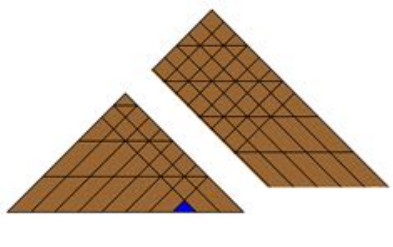}
		\caption{Cut the chocolate bar in Figure \ref{chocok3732}}
		\label{chococut}
	\end{minipage}
\end{figure}		
\end{exam}

Next, we are going to define the function $movek ((x, y, z))$ for position $(x, y, z)$ of triangular chocolate bars, where $movek ((x, y, z))$ is the set of all positions that can be reached from position $(x, y, z)$ in one step (directly).

\begin{defn}\label{defofmovek} Let $k$ be a natural number such that $k = 4m+3$ for some $m \in Z_{\geq0}$.
For $x,y,z \in Z_{\ge 0}$, we define $movek((x,y,z))=\{(u,y,z);u<x\} \cup \{(x,v,z);v<y\} \cup \{(x,y,w);w<z\} \cup \{(u,\min(y, \lfloor \frac{u+z}{k} \rfloor ),z);u<z\} $ 
\\$\cup \{(x,\min(y, \lfloor \frac{x+w}{k} \rfloor ),w);w<z\}$, where $u,v,w \in Z_{\ge 0}$.
\end{defn}

\begin{exam}
Here, we study the case of $k=3$.

\noindent
$(i)$ By Example \ref{howtocut}, $ (5,4,7) \in move3((7,4,7))$. 

\noindent
$(ii)$ Starting with the chocolate bar in Figure \ref{chocok3747} and reducing $z$ to $2$, 
 the second number of the position will be $\min(4, \lfloor \frac{7+2}{3} \rfloor )=\min(4,3)=3$. Therefore, we have the chocolate bar in Figure \ref{chocok3732} with the position $(7,3,2)$, and $ (7,3,2) \in move3((7,4,7))$. 
 
 \noindent
$(iii)$ Similarly, the chocolate bar with $(7,2,7)$ can be easily obtained by reducing the second number of the position to 2. Therefore, $ (7,2,7) \in move3((7,4,7))$.
\end{exam}

\section{Sequences of Three Functions}\label{studyofsequences}
Let $k = 4m+3$ for some $m \in Z_{\geq0}$. In this section, we study sequences constructed by three functions. These sequences of three functions will be used in Section \ref{relation1} to study the chocolate bar games that satisfy the inequality $y \leq \lfloor \frac{x+z}{k} \rfloor$.

\begin{defn}\label{definitionp1p2p3}
We define three functions 
${P_1}^k(x)=2x+2$, ${P_2}^k(x)=2x$, and ${P_3}^k(x)=2x+1-k$ for any $x \in Z_{\geq0}$.
\end{defn}
Note that ${P_1}^k(x) > {P_2}^k(x) > {P_3}^k(x)$ for any $x \in Z_{\geq0}$.

\begin{exam}\label{nonproperseq}
Let $k = 7$. Then, ${P_1}^7(x)=2x+2$, ${P_2}^7(x)=2x$, and ${P_3}^7(x)=2x+1-7=2x-6$.

\noindent
$(i)$ If we apply ${P_2}^7,{P_3}^7,{P_2}^7,{P_3}^7,{P_1}^7$ repeatedly to $2$, then 
we have 
$\{2,{P_2}^7(2)=4, {P_3}^7(4)=2,{P_2}^7(2)=4,
{P_3}^7 (4)=2, {P_1}^7 (2)=6\}$
$=\{2, 4,2,4,2,6\}$.\\
$(ii)$ 
 If we apply ${P_2}^7,{P_3}^7,{P_2}^7,{P_2}^7,{P_1}^7$ repeatedly to $2$, then we have 
$\{2,{P_2}^7(2)=4, {P_3}^7(4)=2,{P_2}^7(2)=4,
{P_2}^7(4)=8,{P_1}^7 (8)=18\}$
$=\{2, 4,2,4,8,18\}$.\\
$(iii)$ If we apply ${P_3}^7,{P_1}^7,{P_2}^7,{P_1}^7,{P_3}^7,{P_1}^7$ repeatedly to $2$, then we have
$\{2,{P_3}^7(2)=-2, {P_1}^7(-2)= -2,
{P_2}^7 (-2)=-4,{P_1}^7(-4)=-6,{P_3}^7(-6)=-18, {P_1}^7(-18)=-34\}$
$=\{2,-2,-2,-4,-6,-18,-34\}$.
\end{exam}

In Example \ref{nonproperseq}, we start with $2$ and make a sequence by repeatedly applying one of the three functions [${P_1}^k$, ${P_2}^k$, or ${P_3}^k$]. In this section, all sequences are constructed in this way. Note that these sentences are made by even numbers since we get only even numbers by applying ${P_1}^k$, ${P_2}^k$, or ${P_3}^k$ to $2$ repeatedly when $k$ is an odd number.

\begin{defn}\label{typeabc}
We define three types of sequences of integers:

\noindent
$(i)$ A sequence $\{b_0,b_1,b_2,...,b_n\}$ is said to be Type 1 if $0 \leq b_i < k$ for each $i = 0,1,2,...,n$.

\noindent
$(ii)$ A sequence $\{b_0,b_1,b_2,...,b_n\}$ is said to be Type 2 if there exists a non-negative number $t$ such that $0 \leq b_i < k$ for each $i = 0,1,2,...,t$ and $b_i \geq k$ for each $i = t+1,...,n$.

\noindent
$(iii)$ A sequence $\{b_0,b_1,b_2,...,b_n\}$ is said to be Type 3 if there exists a non-negative $t$ such that $0 \leq b_i < k$ for each $i = 0,1,2,...,t$ and $b_i < 0$ for each $i = t+1,...,n$.
\end{defn}

\begin{rem}
Sequences $(i)$, $(ii)$, and $(iii)$ in Example \ref{nonproperseq} are Type 1, Type 2, and Type 3 sequeneces, respectively.
\end{rem}

\begin{Lem}\label{therearethreetype}
Any sequence $\{b_0,b_1,b_2,...,b_n\}$ is Type 1, or Type 2, or Type 3, and nothing else.

\noindent
Regarding the criteria of these types, we have the following conditions:

\noindent
$(i)$ The sequence is Type 1 if and only if $0 \leq b_n < k$.

\noindent
$(ii)$ The sequence is Type 2 if and only if $ b_n \geq k$.

\noindent
$(iii)$ The sequence is Type 3 if and only if $b_n < 0$.
\end{Lem}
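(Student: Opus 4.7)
The plan is to establish two forward-invariance statements, namely that both the region $(-\infty,0)$ and the region $[k,\infty)$ are closed under each of the three maps $P_1^k, P_2^k, P_3^k$. Once these are in hand, any sequence starting at $b_0 = 2 \in [0,k)$ either never leaves $[0,k)$, or eventually enters one of the two trap regions and remains there, giving exactly the three types in the statement. As a preliminary I would note that because $k = 4m+3$ is odd, $1-k$ is even, and hence $2x+2$, $2x$, and $2x+1-k$ are all even whenever $x$ is even; by induction every term $b_i$ of the sequence is therefore an even integer. Consequently $b_i < 0$ is equivalent to $b_i \le -2$, and $b_i \ge k$ is equivalent to $b_i \ge k+1$, which makes the boundary inequalities strict and removes any edge-case ambiguity.

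The two invariance claims then reduce to direct inequalities. For $b \le -2$ one has $P_1^k(b) = 2b+2 \le -2$, $P_2^k(b) = 2b \le -4$, and $P_3^k(b) = 2b+1-k \le -3-k$, all of which are negative, so $(-\infty,0)$ is forward-invariant. For $b \ge k+1$ one has $P_1^k(b) \ge 2k+4$, $P_2^k(b) \ge 2k+2$, and $P_3^k(b) = 2b+1-k \ge k+3$, all strictly exceeding $k$, so $[k,\infty)$ is forward-invariant as well. These are routine one-line checks once parity has been nailed down.

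With invariance in place, the classification is immediate. Let $t$ be the largest index with $b_t \in [0,k)$; such a $t$ exists since $b_0 \in [0,k)$. If $t = n$ then every term lies in $[0,k)$ and the sequence is Type 1 with $b_n \in [0,k)$. Otherwise $b_{t+1}$ lies in one of the two trap regions, and by forward invariance every later term lies in the same region as $b_{t+1}$; this gives Type 2 when $b_{t+1} \ge k$ and Type 3 when $b_{t+1} < 0$, with $b_n$ always in the same region as $b_{t+1}$. Since the three conditions on $b_n$ (namely $b_n \in [0,k)$, $b_n \ge k$, $b_n < 0$) are pairwise exclusive and cover every integer, the type is uniquely determined by $b_n$, which is exactly statements (i)--(iii). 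There is no substantive obstacle; the only care needed is to keep the parity bookkeeping tidy so that the ``trap'' inequalities remain strict at each application of $P_i^k$.
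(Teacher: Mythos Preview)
Your proposal is correct and follows essentially the same approach as the paper's proof. The paper argues identically: once a term leaves $[0,k)$, it is either $\le -2$ (by evenness) and stays negative under all three maps, or it is $\ge k$ and stays above $k$ under all three maps; the criteria $(i)$--$(iii)$ then follow because the three conditions on $b_n$ are exhaustive and mutually exclusive. Your version is simply more explicit in writing out the invariance inequalities and in framing the argument via ``trap regions,'' but the content is the same.
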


\begin{proof}
Suppose that $0 \leq b_i < k$ for each $i = 0,1,2,...,n$. This is Type 1.

\noindent
Suppose that there exists $t$ such that $0 \leq b_i < k$ for each $i = 0,1,2,...,t$ and $0 \leq b_{t+1} < k$ is not true. Then, we have
$b_{t+1} < 0$ or $b_{t+1} \geq k$. We study these cases separately.

\noindent
If $b_{t+1} < 0$, then $b_{t+1} \leq -2$ since $b_{t+1}$ is even. Then, by applying ${P_1}^k,{P_2}^k,{P_3}^k$ repeatedly to $b_{t+1}$, we get only a negative number, and the sequence we construct is Type 3.

\noindent
If $b_{t+1} \geq k$, by applying ${P_1}^k,{P_2}^k,{P_3}^k$ repeatedly to $b_{t+1}$, we get numbers bigger than $k$, and the sequence we construct is Type 2.

\noindent
Therefore, $\{b_0,b_1,b_2,...,b_n\}$ is Type 1, or Type 2, or Type 3, and nothing else.

\noindent
$(i)$ If the sequence is Type 1, then $0 \leq b_n < k$.

\noindent
Conversely, if $0 \leq b_n < k$, then by Definition \ref{typeabc}, this sequence is neither Type 2 nor Type 3. Therefore, this sequence is Type 1.

\noindent
$(ii)$ If the sequence is Type 2, then $ b_n \geq k$.

\noindent
Conversely, if $ b_n \geq k$, then by Definition \ref{typeabc}, this sequence is neither Type 1 nor Type 3. Therefore, this sequence is Type 2.

\noindent
$(iii)$ If the sequence is Type 3, then $b_n < 0$.

\noindent
Conversely, if $b_n < 0$, then by Definition \ref{typeabc}, this sequence is neither Type 1 nor Type 2. Therefore, this sequence is Type 3.
\end{proof}

\begin{Lem}\label{type2subsequence}
	Let $t,n \in Z_{\geq 0}$ such that $t \leq n$.
	If $\{b_0,b_1,b_2,...,b_t\}$ is Type 2, then $\{b_0,b_1,b_2,...,b_n\}$ is Type 2.
\end{Lem}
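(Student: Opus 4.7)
The plan is to reduce everything to the simple characterization given in Lemma \ref{therearethreetype}(ii): a sequence is Type 2 if and only if its last term is $\geq k$. So instead of trying to juggle the existence of the threshold index $t$ appearing in Definition \ref{typeabc}(ii), I will just check that the last term of the extended sequence remains $\geq k$, and then invoke the lemma.

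First I would observe that since $\{b_0,b_1,\ldots,b_t\}$ is Type 2, Lemma \ref{therearethreetype}(ii) gives $b_t \geq k$. Next I would show the key monotonicity fact: if $x \geq k$ then $P_i^k(x) \geq k$ for $i=1,2,3$. This is a one-line calculation in each case, using $k \geq 3$: $P_1^k(x)=2x+2 \geq 2k+2 > k$, $P_2^k(x)=2x \geq 2k > k$, and $P_3^k(x)=2x+1-k \geq 2k+1-k = k+1 > k$. Then a trivial induction on $i$ from $t$ up to $n$ (using that $b_{i+1}$ is obtained from $b_i$ by applying one of $P_1^k,P_2^k,P_3^k$) yields $b_n \geq k$. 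Applying Lemma \ref{therearethreetype}(ii) once more in the reverse direction lets me conclude that $\{b_0,b_1,\ldots,b_n\}$ is Type 2.

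There is no real obstacle in this proof; the only tiny subtlety is remembering that the Type-2 threshold index $t$ in Definition \ref{typeabc}(ii) need not be reused in the extended sequence, and that the cleanest route is through the last-term characterization in Lemma \ref{therearethreetype}. Handling the $n=t$ case is immediate since the hypothesis already gives the conclusion.
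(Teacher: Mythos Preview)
Your proof is correct and close in spirit to the paper's, but the paper takes a slightly shorter path. Both arguments start from $b_t \geq k$. You then push this forward by verifying the monotonicity $x \geq k \Rightarrow P_i^k(x) \geq k$ and inducting to get $b_n \geq k$, after which Lemma~\ref{therearethreetype}(ii) (the last-term criterion) finishes. The paper instead observes directly that the presence of a term $b_t \geq k$ already rules out Type~1 (all terms in $[0,k)$) and Type~3 (terms in $[0,k)$ followed by negative terms) by Definition~\ref{typeabc}, and then invokes the trichotomy of Lemma~\ref{therearethreetype} to conclude Type~2. Your route is a touch more explicit and self-contained (you essentially re-prove the stability of the $\geq k$ region that is buried inside the proof of Lemma~\ref{therearethreetype}), while the paper's route is more economical since it exploits the full trichotomy rather than only the last-term criterion.
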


\begin{proof}
Since $b_t \geq k$, by Definition \ref{typeabc}, $\{b_0,b_1,b_2,...,b_n\}$ is not Type 1 or Type 3. Therefore, by Lemma \ref{therearethreetype}
$\{b_0,b_1,b_2,...,b_n\}$ is Type 2.
\end{proof}

\begin{Lem}\label{lemmaforwhichk}
For any even number $h$ with $0 \leq h < k$, we have the following statements: 

\noindent
$(i)$ If $0 \leq h \leq 2m$, then $0 \leq {P_2}^k(h) < {P_1}^k(h) < k$ and ${P_3}^k(h) < 0$.

\noindent
$(ii)$ If $2m+2 \leq h < k = 4m+3$, then $0 < {P_3}^k(h) < k$ and $k < {P_2}^k(h) < {P_1}^k(h)$.

\noindent
$(iii)$ If $k < {P_u}^k(h) $ for some $u \in \{1,2,3\}$, then $2m+2 \leq h $ and $u = 1$ or $u = 2$.

\noindent
Note that $h \neq 2m+1$ since $h$ is an even number.
\end{Lem}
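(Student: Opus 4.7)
The plan is to treat the three parts as direct computations, using only the definitions $P_1^k(x)=2x+2$, $P_2^k(x)=2x$, $P_3^k(x)=2x+1-k$, and the identity $k=4m+3$. The note that $h\neq 2m+1$ is automatic (since $2m+1$ is odd while $h$ is even), so parts (i) and (ii) together cover every admissible even $h$ in the range $0\le h<k$: the even values $0,2,\ldots,2m$ on one side and $2m+2,2m+4,\ldots,4m+2$ on the other.

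For part (i), I would assume $0\le h\le 2m$. Then $P_2^k(h)=2h\le 4m<k$ and obviously $P_2^k(h)\ge 0$, while $P_1^k(h)=2h+2\le 4m+2<4m+3=k$; the strict inequality $P_2^k(h)<P_1^k(h)$ is immediate from the definitions. For $P_3^k(h)$, compute $P_3^k(h)=2h+1-k\le 4m+1-(4m+3)=-2<0$. For part (ii), assume $2m+2\le h<k$; since $h$ is even and $k=4m+3$ is odd, we in fact have $h\le 4m+2$. Then $P_3^k(h)=2h+1-k=2h-4m-2$ lies between $2(2m+2)-4m-2=2$ and $2(4m+2)-4m-2=4m+2<k$, giving $0<P_3^k(h)<k$. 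Meanwhile $P_2^k(h)=2h\ge 4m+4>k$, and $P_2^k(h)<P_1^k(h)$ as before.

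For part (iii), I would argue by contrapositive: suppose either $h\le 2m$ or $u=3$, and show $P_u^k(h)\le k$. If $h\le 2m$, part (i) already gives $P_1^k(h)<k$, $P_2^k(h)<k$, and $P_3^k(h)<0<k$. If instead $u=3$, then using only $h\le 4m+2$ (which follows from $h<k$ and $h$ even), we get $P_3^k(h)=2h+1-k\le 2(4m+2)+1-(4m+3)=4m<k$. In either case $P_u^k(h)\le k$, so if $k<P_u^k(h)$ holds we must have $h\ge 2m+2$ (hence $h\ge 2m+2$, since $h$ is even and $2m+1$ is excluded) and $u\in\{1,2\}$, as required.

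There is no real obstacle here; the only things to keep track of are the parity of $h$ (so that $h\le 2m$ in case (i) and $h\le 4m+2$ in case (ii) are the tight even bounds) and the substitution $k=4m+3$ when bounding $P_3^k(h)$. Everything else is a one-line inequality from the definitions.
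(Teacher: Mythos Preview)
Your proof is correct and follows essentially the same approach as the paper: direct inequality computations for (i) and (ii), and a contrapositive argument for (iii) that reduces to the bounds already established in (i) and (ii). The only cosmetic difference is that the paper phrases (iii) as ``contraposition of (i) gives $h\ge 2m+2$, then apply (ii),'' whereas you split the contrapositive into the two cases $h\le 2m$ and $u=3$; the content is identical.
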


\begin{proof}
$(i)$ Suppose that $0 \leq h \leq 2m$. Then, we have $0 \leq 2h < 2h+2 \leq 4m+2 < 4m+3 = k$, and by Definition \ref{definitionp1p2p3},
we have $0 \leq {P_2}^k(h) < {P_1}^k(h) < k$.

\noindent
We also have $2h+1-k \leq 4m+1-(4m+3) = -2 < 0$, and by Definition \ref{definitionp1p2p3}, ${P_3}^k(h) < 0$.

\noindent
$(ii)$ Suppose that $2m+2 \leq h < k = 4m+3$. Then, we have
$2m+2 \leq h \leq k-1$, and 
$2(2m+2)+1-k \leq 2h+1-k \leq 2(k-1)+1-k$.

\noindent
Therefore, we have $2 \leq 2h+1-k \leq k-1 < k$.
By Definition \ref{definitionp1p2p3}, $0 < {P_3}^k(h) < k$.

\noindent
We also have $k < 4m+4 \leq 2h < 2h+2$, and by Definition \ref{definitionp1p2p3}, $k < {P_2}^k(h) < {P_1}^k(h)$.

\noindent
$(iii)$ Suppose that 
\begin{align}\label{ksmallerthp}
k < {P_u}^k(h) 
\end{align}
 for some $u \in \{1,2,3\}$.
 By the contraposition of $(i)$ in this lemma, $2m+2 \leq h $. By $(ii)$ of this lemma, $0 < {P_3}^k(h) < k$ and $k < {P_2}^k(h) < {P_1}^k(h)$.
 Therefore, by the inequality in (\ref{ksmallerthp}), we have $u = 1$ or $u = 2$.
\end{proof}
	
\begin{Lem}\label{lemmaforwhichk2}
A sequence $\{b_0,b_1,b_2,...,b_n\}$ is Type 1 if and only if 
for each $i$ we have one of the following cases:

\noindent
$(i)$ If $0 \leq b_i \leq 2m$, then $b_{i+1} = {P_1}^k(b_i)$ or $b_{i+1} = {P_2}^k(b_i)$.

\noindent
$(ii)$ If $2m+2 \leq b_i < k = 4m+3$, then $b_{i+1} = {P_3}^k(b_i)$.

\noindent
Note that $b_i \neq 2m+1$ since $b_i$ is an even number.
\end{Lem}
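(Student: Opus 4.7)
The plan is to derive both directions as essentially a bookkeeping exercise on top of Lemma \ref{lemmaforwhichk}, using the observation that every term $b_i$ in the sequence is even (since $k$ is odd and the three maps $P_u^k$ preserve parity when $k$ is odd), so $b_i \ne 2m+1$ and the two ranges $[0,2m]$ and $[2m+2,k-1]$ together account for all even integers in $[0,k)$.

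For the forward direction, I would assume the sequence is Type 1, so $0\le b_i<k$ for all $i$. Fix any $i<n$. Since $b_i$ is even and lies in $[0,k)$, either $0\le b_i\le 2m$ or $2m+2\le b_i<k$. In the first case, Lemma \ref{lemmaforwhichk}(i) gives $P_3^k(b_i)<0$, so if $b_{i+1}$ were equal to $P_3^k(b_i)$ we would violate $b_{i+1}\ge 0$; the only remaining options among the three maps are $P_1^k$ and $P_2^k$, which is case (i) of the lemma. In the second case, Lemma \ref{lemmaforwhichk}(ii) gives $P_1^k(b_i),P_2^k(b_i)>k$, so neither can equal $b_{i+1}$ (which is $<k$), forcing $b_{i+1}=P_3^k(b_i)$, which is case (ii).

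For the reverse direction, I would suppose the stated conditions hold and show $0\le b_i<k$ for every $i$. For $i\le n-1$ the assertion is immediate: the hypothesis itself specifies that $b_i$ lies in $[0,2m]$ or in $[2m+2,k)$, in either case within $[0,k)$. The only remaining point is the last term $b_n$, which the conditions do not directly constrain. To handle it I apply the transition rule at $i=n-1$: if $b_{n-1}\in[0,2m]$, then $b_n\in\{P_1^k(b_{n-1}),P_2^k(b_{n-1})\}$, and Lemma \ref{lemmaforwhichk}(i) gives $0\le b_n<k$; if $b_{n-1}\in[2m+2,k)$, then $b_n=P_3^k(b_{n-1})$, and Lemma \ref{lemmaforwhichk}(ii) gives $0<b_n<k$. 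Either way $b_n\in[0,k)$, so the sequence is Type 1.

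The argument is essentially a case-split plugged into Lemma \ref{lemmaforwhichk}, so I do not anticipate a substantive obstacle. The one minor pitfall to be careful of is that the hypothesis only constrains $b_i$ for $i\le n-1$, so the last term $b_n$ must be treated separately via the transition from $b_{n-1}$; aside from that, the proof is a routine verification.
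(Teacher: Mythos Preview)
Your proof is correct and follows exactly the approach the paper intends: the paper's own proof reads in its entirety ``This follows directly from Lemma \ref{lemmaforwhichk},'' and your argument supplies precisely the case analysis that makes this claim explicit. Your care in treating the final term $b_n$ separately (since the transition hypothesis only constrains $b_0,\dots,b_{n-1}$) is a detail the paper leaves implicit.
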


\begin{proof}
This follows directly from Lemma \ref{lemmaforwhichk}.
\end{proof}
	
\begin{Lem}\label{lemmaforlonger}
If a sequence $\{b_0,b_1,b_2,...,b_t\}$ is Type 1 and $t < n$, then 
we can define $b_i$ for $i = t+1, t+2,...,n$ so that the sequence $\{b_0,b_1,b_2,...,b_n\}$ is Type 1.
\end{Lem}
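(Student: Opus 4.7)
The plan is to prove this by a straightforward induction on $n-t$, with Lemma \ref{lemmaforwhichk} (equivalently Lemma \ref{lemmaforwhichk2}) doing essentially all of the work. The content of the statement is that from any term $b_t$ lying in $[0,k)$ we can always choose a next term $b_{t+1}$ that also lies in $[0,k)$; once this is established, iterating the argument produces the desired extension.

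For the base case I take $n = t+1$. Since $\{b_0,\dots,b_t\}$ is Type 1, $b_t$ is an even integer with $0 \le b_t < k$. Because $k = 4m+3$ is odd, $b_t$ cannot equal $2m+1$, so either $b_t \le 2m$ or $b_t \ge 2m+2$. In the first case, Lemma \ref{lemmaforwhichk}(i) gives $0 \le {P_2}^k(b_t) < {P_1}^k(b_t) < k$, so setting $b_{t+1} := {P_1}^k(b_t)$ (or ${P_2}^k(b_t)$) yields a term in $[0,k)$. In the second case, Lemma \ref{lemmaforwhichk}(ii) gives $0 < {P_3}^k(b_t) < k$, so setting $b_{t+1} := {P_3}^k(b_t)$ works. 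In either case the extended sequence has all terms in $[0,k)$, and therefore by Lemma \ref{therearethreetype}(i) it is Type 1.

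For the inductive step, assuming the claim for $n-1$ (with $t < n-1$), I first apply the inductive hypothesis to obtain a Type 1 extension $\{b_0,\dots,b_{n-1}\}$, and then apply the base case once more to produce $b_n$ with $b_n \in [0,k)$. The resulting sequence $\{b_0,\dots,b_n\}$ satisfies $0 \le b_n < k$, hence is Type 1 by Lemma \ref{therearethreetype}(i).

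There is no real obstacle here; the only subtlety worth flagging is the parity/odd-$k$ gap argument that excludes the value $2m+1$, which is what guarantees that one of the two regimes of Lemma \ref{lemmaforwhichk} always applies and no ``dead-end'' value of $b_t$ can arise.
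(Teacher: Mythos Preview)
Your proof is correct and follows essentially the same approach as the paper: both use Lemma~\ref{lemmaforwhichk} to show that from any even $b_i\in[0,k)$ one can always choose $b_{i+1}\in[0,k)$, splitting into the cases $b_i\le 2m$ and $b_i\ge 2m+2$. The only cosmetic difference is that you package the iteration as a formal induction on $n-t$, whereas the paper describes the step-by-step construction of $b_{t+1},\dots,b_n$ directly.
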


\begin{proof}
We define $b_{i+1}$ for $i = t,...,n-1$, and we consider two cases.

\noindent
\underline{Case $(a)$}
 If $0 \leq b_i \leq 2m$, let $b_{i+1} = {P_1}^k(b_i)$ or $b_{i+1} = {P_2}^k(b_i)$. Then, by $(i)$ of Lemma \ref{lemmaforwhichk}, we have $0 \leq b_{i+1} \leq k$.
 
 \noindent
\underline{Case $(b)$} If $2m+2 \leq b_i < k$, let $b_{i+1} = {P_3}^k(b_i)$. Then, by $(ii)$ of Lemma \ref{lemmaforwhichk}, we have $0 \leq b_{i+1} \leq k$.
By Case $(a)$ and Case $(b)$, we construct a sequence $\{b_0,b_1,b_2,...,b_n\}$ such that $0 \leq b_j \leq k$ for $j = 0,1,...,n$.
This sequence is Type 1.
\end{proof}

\begin{Lem}\label{lemmaforwhichk3}
A sequence $\{c_0,c_1,c_2,...,c_n\}$ is Type 2 if and only if there is a sequence $\{b_0,b_1,b_2,...,b_n\}$ that satisfies one of the following cases:

\noindent
$(i)$ $\{b_0,b_1,b_2,...,b_n\}$ is Type 1.

\noindent
$(ii)$ There is a non-negative number $j$ such that $b_t = c_t$ for $t = 0,1,...,j, 2m+2 \leq c_j=b_j < k$, $b_{j+1} = {P_3}^k(b_j)= {P_3}^k(c_j)$ and $c_{j+1} = {P_1}^k(c_j)$ or ${P_2}^k(c_j)$.
\end{Lem}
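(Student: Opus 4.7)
The plan is to prove the two implications of the \emph{if and only if} statement separately, using Lemma \ref{lemmaforwhichk} to control where a single application of ${P_1}^k$, ${P_2}^k$, or ${P_3}^k$ sends a value, together with Lemma \ref{lemmaforlonger} to extend a partial Type 1 sequence, and Lemma \ref{type2subsequence} to push Type 2 through an enlargement of the index range. My reading of the statement is that the witness $b$ is required to satisfy \emph{both} (i) and (ii) simultaneously: condition (i) is a global statement about $b$, while (ii) specifies how $b$ relates to $c$ at the branching index $j$.

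For the forward direction ($\Rightarrow$), assume $\{c_0,\ldots,c_n\}$ is Type 2. By Definition \ref{typeabc}(ii), there is a (unique) $j$ with $0 \le c_t < k$ for $t = 0,1,\ldots,j$ and $c_{j+1} \ge k$. Since $c_{j+1}$ is obtained from $c_j$ by one of ${P_1}^k$, ${P_2}^k$, ${P_3}^k$, Lemma \ref{lemmaforwhichk}(iii) forces $c_j \ge 2m+2$ and rules out ${P_3}^k$, so $c_{j+1} = {P_1}^k(c_j)$ or ${P_2}^k(c_j)$. Now define $b_t = c_t$ for $t \le j$ and $b_{j+1} = {P_3}^k(c_j)$; by Lemma \ref{lemmaforwhichk}(ii) this value lies strictly between $0$ and $k$, so the sequence $\{b_0,\ldots,b_{j+1}\}$ is Type 1. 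Lemma \ref{lemmaforlonger} extends it to a Type 1 sequence $\{b_0,\ldots,b_n\}$, giving a witness satisfying both (i) and (ii).

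For the backward direction ($\Leftarrow$), suppose $b$ satisfies (i) and (ii). From (ii), $c_j = b_j$ with $2m+2 \le c_j < k$ and $c_{j+1} \in \{{P_1}^k(c_j),{P_2}^k(c_j)\}$. Lemma \ref{lemmaforwhichk}(ii) then gives $c_{j+1} \ge k$, while the initial segment $c_0,\ldots,c_j$ coincides with $b_0,\ldots,b_j$ and hence lies in $[0,k)$ because $b$ is Type 1. Therefore $\{c_0,\ldots,c_{j+1}\}$ is Type 2, and Lemma \ref{type2subsequence} upgrades this to the full sequence $\{c_0,\ldots,c_n\}$ being Type 2.

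The only genuine content is the construction in the forward direction: verifying that $b_{j+1} = {P_3}^k(c_j)$ is a legal Type 1 continuation of $b_0,\ldots,b_j$, which is precisely what Lemma \ref{lemmaforwhichk}(ii) together with $c_j \ge 2m+2$ guarantees. Everything else is bookkeeping against Definition \ref{typeabc} and the earlier lemmas, so I do not expect a substantial obstacle.
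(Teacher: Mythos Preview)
Your proof is correct and follows essentially the same route as the paper's. Both directions use Lemma \ref{lemmaforwhichk}(iii) (resp.\ (ii)) to identify the branching index and the forced ${P_1}^k/{P_2}^k$ step, then Lemma \ref{lemmaforlonger} to extend the Type~1 witness; your argument is in fact slightly more explicit than the paper's, since you spell out that $b_{j+1}={P_3}^k(c_j)$ (the paper leaves this implicit in the construction of Lemma \ref{lemmaforlonger}) and you invoke Lemma \ref{type2subsequence} in the converse where the paper jumps directly to ``Type~2.'' Your reading of the statement as requiring \emph{both} (i) and (ii) matches how the paper's own proof treats it.
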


\begin{proof}
	Let $\{c_0,c_1,c_2,...,c_n\}$ be Type 2. Then, there is a non-negative number $j$ such that $0 \leq c_t < k$ for $t = 0,1,...,j$ and 
	$k < c_{j+1} $. By $(iii)$ of Lemma \ref{lemmaforwhichk}, we have 
	 $2m+2 \leq c_j < k$ and $c_{j+1} ={P_2}^k(c_j)$ or $ c_{j+1}={P_1}^k(c_j)$.
	 Let $b_i = c_i$ for $i = 0,...,j$. Then, $\{b_0,b_1,b_2,...,b_j\}$ is Type 1. By Lemma \ref{lemmaforlonger},
	 we can construct a sequence $\{b_0,b_1,b_2,...,b_n\}$ that is Type 1.
	 
	 \noindent
	Conversely, suppose that 
	there is a sequence $\{b_0,b_1,b_2,...,b_n\}$ that satisfies $(i)$ and $(ii)$ of this Lemma.
	Then, $0 \leq c_t < k$ for $t = 0,1,...,j$ and 
	$2m+2 \leq c_j=b_j < k$, and $c_{j+1} = {P_1}^k(c_j)$ or ${P_2}^k(c_j)$.
	Then, by $(ii)$ of Lemma \ref{lemmaforwhichk}, 
	$c_{j+1} ={P_2}^k(c_j)>k$ or $ c_{j+1}={P_1}^k(c_j)>k$.
	Therefore, $\{c_0,c_1,c_2,...,c_n\}$ is Type 2.
\end{proof}

\section{Sequence Generated by {\boldmath $x,y,z$}}\label{sequencemadebyxyz}

Let $x,y,z \in Z_{\geq0}$ in base $2$, so 
\begin{align}\label{xyzinbase}
	x = \sum_{i=0}^n x_i 2^i , y = \sum_{i=0}^n y_i 2^i \text{ and } z = \sum_{i=0}^n z_i 2^i \text{ with } x_i,y_i,z_i \in \{0,1\}.
\end{align}

Throughout this section, we suppose that

\begin{align}\label{defnofsequencesn11}
	x \oplus y \oplus z = 0
\end{align}
and

\begin{align}\label{defnofsequencesn21}
	x_n=z_n=1 \text{ \ and \ } y_n=0.
\end{align}

By (\ref{defnofsequencesn11}),
$x_i + y_i + z_i = 0 \ (mod \ 2)$, and hence 

\begin{align}\label{defnofsj00000}
	(x_i,y_i,z_i) = (1,0,1) \text{ or } (0,0,0) \text{ or } (1,1,0) \text{ or } (0,1,1).
\end{align}

\begin{defn}\label{defnofsequencesj}
We define a sequence of non-negative integers 
$s_0,s_1,...,s_n$ by 
\begin{align}\label{defnofsj00}
s_j=\sum\limits_{i = n-j}^n {({x_i}+{z_i}-k{y_i})} {2^{i+j-n}}
\end{align}
for $j = 0,1,2,...,n$.

\noindent
This sequence $s_0,s_1,...,s_n$ is said to be generated by $x,y,z$.
\end{defn}

\begin{Lem}\label{valueofs0sn}
	By (\ref{defnofsequencesn21}) and (\ref{defnofsj00}),
	\begin{align}
		&	s_0=\sum\limits_{i = n}^n {({x_i}+{z_i}-k{y_i})} {2^{i-n}} = x_n + z_n -ky_n = 2, \label{sequenceforj0} \\
		&	s_1=\sum\limits_{i = n-1}^n {({x_i}+{z_i}-k{y_i})} {2^{i+1-n}} =(x_{n-1} + z_{n-1} -ky_{n-1}) 2^0 \nonumber\\
		& + (x_n + z_n -ky_n)2^1 \label{sequenceforj1} \\
		&\text{and } \nonumber \\
		&	s_n=\sum\limits_{i = 0}^n {({x_i}+{z_i}-k{y_i})} {2^{i}} = x + z -ky. \label{xzkyrelation}
	\end{align}
\end{Lem}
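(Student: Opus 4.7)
The plan is to treat this lemma as a direct unpacking of Definition \ref{defnofsequencesj}, evaluated at the three index values $j=0$, $j=1$, and $j=n$, combined with the standing hypothesis (\ref{defnofsequencesn21}) and the base-$2$ expansion (\ref{xyzinbase}). Because all three statements follow by pure substitution into (\ref{defnofsj00}), the lemma has no genuine combinatorial content; I only need to be careful that the index range and the exponent of $2$ in each case reduce correctly.

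For $s_0$, I would set $j=0$ in (\ref{defnofsj00}). The range $i=n-j,\dots,n$ collapses to the single index $i=n$, and the exponent $i+j-n$ becomes $0$, so the formula reads $s_0 = x_n + z_n - k y_n$. Then I apply (\ref{defnofsequencesn21}), which says $x_n = z_n = 1$ and $y_n = 0$, to conclude $s_0 = 1 + 1 - k \cdot 0 = 2$.

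For $s_1$, I would set $j=1$, so the summation runs over $i=n-1$ and $i=n$, and the exponent $i+j-n = i+1-n$ takes the values $0$ and $1$ respectively. Writing out the two terms gives the stated expression for $s_1$; no further simplification is needed because the coordinates $x_{n-1}, y_{n-1}, z_{n-1}$ are not constrained by the hypotheses.

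For $s_n$, I would set $j=n$, making the range $i=0,\dots,n$ and the exponent $i+j-n = i$. The expression then splits by linearity of the sum as $\sum_{i=0}^n x_i 2^i + \sum_{i=0}^n z_i 2^i - k\sum_{i=0}^n y_i 2^i$, and invoking the base-$2$ representations from (\ref{xyzinbase}) identifies these three sums as $x$, $z$, and $y$, giving $s_n = x+z-ky$. The only thing to watch is that the closed forms of the index range and the exponent are read off correctly; there is no real obstacle since the lemma is essentially a bookkeeping statement that prepares the identifications used in the subsequent analysis of the sequence $\{s_j\}$.
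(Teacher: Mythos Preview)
Your proof is correct and follows the same approach as the paper, which simply states that the lemma follows directly from Definition~\ref{defnofsequencesj}. If anything, you have spelled out the substitutions $j=0,1,n$ and the use of (\ref{defnofsequencesn21}) and (\ref{xyzinbase}) in more detail than the paper does.
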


\begin{proof}
This lemma follows directly from Definition \ref{defnofsequencesj}.
\end{proof}

\begin{Lem}\label{threesequence}
$(i)$ If $(x_{n-j-1},y_{n-j-1},z_{n-j-1})=(1,0,1)$, then 
\begin{align}\label{caseof1}
	s_{j+1}=2s_j + 2 = {P_1}^k(s_j).
\end{align}
$(ii)$ If $(x_{n-j-1},y_{n-j-1},z_{n-j-1})=(0,0,0)$, then
\begin{align}\label{caseof2}
	s_{j+1}=2s_j ={P_2}^k(s_j).
\end{align}
$(iii)$ If $(x_{n-j-1},y_{n-j-1},z_{n-j-1})=(1,1,0)$ or $(0,1,1)$, then
\begin{align}\label{caseof3}
	s_{j+1}= 2s_j+1-k={P_3}^k(s_j).
\end{align}
\end{Lem}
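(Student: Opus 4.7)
The plan is to unpack the definition of $s_{j+1}$ via (\ref{defnofsj00}), split off the new $i = n-j-1$ term, and identify the rest as $2s_j$. Then each of the three cases is resolved by reading off the value of $x_{n-j-1} + z_{n-j-1} - k y_{n-j-1}$ from the assumed bit pattern and comparing with the formulas for ${P_1}^k, {P_2}^k, {P_3}^k$ in Definition \ref{definitionp1p2p3}.

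Concretely, first I would write
\begin{align*}
 s_{j+1} &= \sum_{i=n-j-1}^n (x_i + z_i - k y_i)\, 2^{i+j+1-n} \\
 &= (x_{n-j-1} + z_{n-j-1} - k y_{n-j-1})\,2^0 + \sum_{i=n-j}^n (x_i + z_i - k y_i)\, 2^{i+j+1-n} \\
 &= (x_{n-j-1} + z_{n-j-1} - k y_{n-j-1}) + 2 s_j,
\end{align*}
where the last equality uses $2^{i+j+1-n} = 2 \cdot 2^{i+j-n}$ and the definition (\ref{defnofsj00}) of $s_j$. This reduces the lemma to evaluating the coefficient $x_{n-j-1} + z_{n-j-1} - k y_{n-j-1}$ in each of the three allowed bit patterns listed in (\ref{defnofsj00000}).

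For case $(i)$, $(1,0,1)$ gives $x_{n-j-1} + z_{n-j-1} - k y_{n-j-1} = 1 + 1 - 0 = 2$, so $s_{j+1} = 2s_j + 2 = {P_1}^k(s_j)$. For case $(ii)$, $(0,0,0)$ gives $0$, so $s_{j+1} = 2 s_j = {P_2}^k(s_j)$. For case $(iii)$, both $(1,1,0)$ and $(0,1,1)$ yield $1 - k$, so $s_{j+1} = 2 s_j + 1 - k = {P_3}^k(s_j)$.

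There is no real obstacle here beyond bookkeeping: the only place to slip is in the exponent $i+j+1-n$ of $2$ when peeling off the new $i=n-j-1$ term, which must equal $0$ (confirming the constant contribution) and must otherwise factor as $2 \cdot 2^{i+j-n}$ for $i \geq n-j$. Once that is handled cleanly, the lemma is a direct substitution using (\ref{defnofsequencesn11})--(\ref{defnofsj00000}) and Definition \ref{definitionp1p2p3}.
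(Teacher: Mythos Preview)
Your proof is correct and follows essentially the same approach as the paper: both expand $s_{j+1}$ from (\ref{defnofsj00}), peel off the $i=n-j-1$ term to obtain the recurrence $s_{j+1}=2s_j+x_{n-j-1}+z_{n-j-1}-ky_{n-j-1}$, and then read off the three cases. Your version is slightly more explicit in carrying out the case evaluation, but the argument is the same.
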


\begin{proof}
By (\ref{defnofsj00}),
\begin{align}
	s_{j+1}=& \sum\limits_{i = n-j-1}^n {({x_i}+{z_i}-k{y_i})} {2^{i+j+1-n}}  \nonumber \\
	= & 2(\sum\limits_{i = n-j}^n {({x_i}+{z_i}-k{y_i})} {2^{i+j-n}})+x_{n-j-1}+z_{n-j-1}-k y_{n-j-1} \nonumber \\
	=& 2s_j+x_{n-j-1}+z_{n-j-1}-k y_{n-j-1}.  \label{defsjplus}
\end{align}

By (\ref{defsjplus}), we have $(i)$, $(ii)$, or $(iii)$.
\end{proof}

By Lemma \ref{threesequence}, $s_0,s_1,...,s_n$
is the sequence generated by applying ${P_1}^k$, ${P_2}^k$ and ${P_3}^k$
repeatedly to 2.

\section{Relationship Between the Inequality and the Sequence}\label{relation1}

\begin{Lem}\label{criteriaofineqa}
	For $x,y,z \in Z_{\geq0}$, the following conditions hold:
	
	\noindent
	$(i)$ $0 \leq x+z-ky < k$ if and only if $ y = \lfloor \frac{x+z}{k} \rfloor$;
	
	\noindent
	$(ii)$ $k \leq x+z-ky $ if and only if $ y < \lfloor \frac{x+z}{k} \rfloor$; and
	
	\noindent
	$(iii)$ $ x+z-ky < 0$ if and only if $ y > \lfloor \frac{x+z}{k} \rfloor$.
\end{Lem}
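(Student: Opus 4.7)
The plan is to reduce all three parts to the defining property of the floor function, namely that $q=\lfloor (x+z)/k \rfloor$ is the unique integer with $kq \leq x+z < k(q+1)$. Since the three left-hand conditions $\{x+z-ky<0\}$, $\{0\leq x+z-ky<k\}$, $\{k\leq x+z-ky\}$ partition $\mathbb{Z}$, and likewise the three right-hand conditions $\{y>\lfloor(x+z)/k\rfloor\}$, $\{y=\lfloor(x+z)/k\rfloor\}$, $\{y<\lfloor(x+z)/k\rfloor\}$ partition $\mathbb{Z}_{\geq 0}$, it suffices to prove one direction of each equivalence; the converses then follow automatically by this disjointness/exhaustion argument.

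First I would handle (i) directly: the inequality $0\leq x+z-ky<k$ rewrites as $ky\leq x+z<k(y+1)$, hence $y\leq (x+z)/k<y+1$, which is exactly the statement $y=\lfloor(x+z)/k\rfloor$. Both directions are immediate.

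Next, for (ii), assume $k\leq x+z-ky$, i.e.\ $k(y+1)\leq x+z$, so $y+1\leq(x+z)/k$, and taking floors gives $y+1\leq \lfloor(x+z)/k\rfloor$, hence $y<\lfloor(x+z)/k\rfloor$. For (iii), assume $x+z-ky<0$, i.e.\ $y>(x+z)/k\geq \lfloor(x+z)/k\rfloor$, and since $y$ is an integer this yields $y\geq\lfloor(x+z)/k\rfloor+1$, so $y>\lfloor(x+z)/k\rfloor$. The reverse implications in (ii) and (iii) then follow from the partition remark above, together with the already-established (i).

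I do not expect a genuine obstacle here; everything is mechanical manipulation of the floor function. The only subtlety is keeping the strict and non-strict inequalities straight and remembering that $y$ is required to be a non-negative integer so that $y>(x+z)/k$ does yield $y\geq \lfloor(x+z)/k\rfloor+1$. Once (i) is nailed down, invoking the mutual exclusivity of the three cases on each side gives (ii) and (iii) without further computation.
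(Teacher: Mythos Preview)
Your argument is correct and follows essentially the same approach as the paper: both prove $(i)$ by rewriting $0\le x+z-ky<k$ as $y\le (x+z)/k<y+1$ and invoking the defining property of the floor. The paper simply says ``similarly'' for $(ii)$ and $(iii)$, whereas you prove one direction of each and then appeal to the trichotomy on both sides, which is a perfectly acceptable and slightly more explicit way to finish.
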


\begin{proof}
	$(i)$ $0 \leq x+z-ky < k$ if and only if $ky \leq x+z < ky +k$ 
	if and only if $y \leq \frac{x+z}{k} < y+1$ if and only if 
	$ y = \lfloor \frac{x+z}{k} \rfloor$.\\
	Similarly, we prove $(ii)$ and $(iii)$, and we finish the proof.
\end{proof}

\begin{Lem}\label{lemmaforwhichk20}
	Let $x,y,z \in Z_{\geq0}$ such that 
\begin{align}\label{xyzoplus0}
x \oplus y \oplus z = 0,
\end{align}
$x_n = z_n = 1$, and $y_n = 0$,
and let $s_0,s_1,...,s_n$ be the sequence generated from $x,y,z$.
Then, $y = \lfloor \frac{x+z}{k} \rfloor $ if and only if 
$s_0,s_1,...,s_n$ is Type 1. For each $j$,
the following conditions hold:

\noindent
$(a)$ If $s_j \leq 2m$, then we have $(a.1)$ or $(a.2)$:

\noindent
$(a.1)$
$(x_{n-j-1},y_{n-j-1},z_{n-j-1})$$=(1,0,1)$ and 
\begin{align}
s_{j+1}={P_1}^k(s_j).
\end{align}
$(a.2)$ $(x_{n-j-1},y_{n-j-1},z_{n-j-1})$$=(0,0,0)$ and 
\begin{align}
s_{j+1}=2s_j ={P_2}^k(s_j).
\end{align}
$(b)$ If $s_j \geq 2m+2$, then $(x_{n-j-1},y_{n-j-1},z_{n-j-1})$ $=(1,1,0)$ or $(0,1,1)$, 
and 
\begin{align}
s_{j+1}= 2s_j+1-k={P_3}^k(s_j).
\end{align}
\end{Lem}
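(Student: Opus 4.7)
My plan is to obtain this Lemma directly from the results of Sections~\ref{studyofsequences} and~\ref{sequencemadebyxyz}, so that essentially nothing new needs to be computed.

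First I would establish the main equivalence. By equation~(\ref{xzkyrelation}) of Lemma~\ref{valueofs0sn}, we have $s_n = x + z - ky$, so Lemma~\ref{criteriaofineqa}$(i)$ gives $y = \lfloor \frac{x+z}{k} \rfloor$ if and only if $0 \leq s_n < k$. Combining this with Lemma~\ref{therearethreetype}$(i)$ immediately yields the equivalence with the sequence $s_0, s_1, \ldots, s_n$ being Type 1.

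Next I would assume Type 1 holds and verify the detailed case analysis $(a.1)$, $(a.2)$, $(b)$ for each $j \in \{0, 1, \ldots, n-1\}$. A preliminary observation is that every $s_j$ is even: $s_0 = 2$ and each of ${P_1}^k$, ${P_2}^k$, ${P_3}^k$ sends even numbers to even numbers (this uses that $k = 4m+3$ is odd, so $1-k$ is even). In particular $s_j = 2m+1$ is impossible, so the dichotomy $s_j \leq 2m$ versus $s_j \geq 2m+2$ is exhaustive. Now, if $s_j \leq 2m$, Lemma~\ref{lemmaforwhichk}$(i)$ gives ${P_3}^k(s_j) < 0$, so by Lemma~\ref{therearethreetype}$(iii)$ the recursion of Lemma~\ref{threesequence} cannot produce $s_{j+1} = {P_3}^k(s_j)$ without contradicting Type 1; the only remaining possibilities are $(x_{n-j-1}, y_{n-j-1}, z_{n-j-1}) = (1,0,1)$ or $(0,0,0)$ with $s_{j+1} = {P_1}^k(s_j)$ or ${P_2}^k(s_j)$, which are exactly $(a.1)$ and $(a.2)$. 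Symmetrically, if $s_j \geq 2m+2$, Lemma~\ref{lemmaforwhichk}$(ii)$ gives ${P_1}^k(s_j), {P_2}^k(s_j) > k$, and by Lemma~\ref{type2subsequence} together with Lemma~\ref{therearethreetype}$(ii)$ either of these choices would force the full sequence to be Type 2, again contradicting Type 1; hence $(x_{n-j-1}, y_{n-j-1}, z_{n-j-1}) = (1,1,0)$ or $(0,1,1)$ with $s_{j+1} = {P_3}^k(s_j)$, which is $(b)$.

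The argument is really an assembly of already-proved lemmas, so there is no serious obstacle. The one step that must be handled with care is the parity observation that excludes $s_j = 2m+1$; without it the dichotomy $s_j \leq 2m$ versus $s_j \geq 2m+2$ would have a gap. Everything else is bookkeeping between the four admissible triples coming from~(\ref{defnofsj00000}), the three functions ${P_u}^k$, and the three Types defined in Definition~\ref{typeabc}.
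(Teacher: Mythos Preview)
Your proposal is correct and follows essentially the same route as the paper: both derive the main equivalence from $s_n = x+z-ky$ via Lemma~\ref{criteriaofineqa}$(i)$ and Lemma~\ref{therearethreetype}$(i)$, and both obtain the case analysis from the interplay between Lemma~\ref{threesequence} and the range restrictions of Lemma~\ref{lemmaforwhichk}. The only cosmetic difference is that the paper cites Lemma~\ref{lemmaforwhichk2} as a black box for the dichotomy, whereas you unpack that one-line lemma inline; your explicit parity remark excluding $s_j = 2m+1$ is a nice touch that the paper leaves implicit.
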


\begin{proof}
Suppose that $y = \lfloor \frac{x+z}{k} \rfloor$.
Then, by Lemma \ref{criteriaofineqa} and (\ref{xzkyrelation}), we have $0 \leq s_n=x+z-ky < k$, and by $(i)$ of Lemma \ref{therearethreetype}, the sentence $s_0,s_1,...,s_n$ is Type 1.
Conversely, if the sentence $s_0,s_1,...,s_n$ is Type 1, then by Lemma \ref{criteriaofineqa}, (\ref{xzkyrelation}), and Lemma \ref{therearethreetype}, 
$y = \lfloor \frac{x+z}{k} \rfloor$.
By Lemma \ref{lemmaforwhichk2}, for each $j$,
we have the following $(a)$ or $(b)$.

\noindent
$(a)$ If $s_j \leq 2m$, then we have $(a.1)$ or $(a.2)$:

\noindent
$(a.1)$
If
\begin{align}
s_{j+1}={P_1}^k(s_j),
\end{align}
then by (\ref{caseof1}), $(x_{n-j-1},y_{n-j-1},z_{n-j-1})$$=(1,0,1)$.\\
$(a.2)$
If
\begin{align}
s_{j+1}=2s_j ={P_2}^k(s_j),
\end{align}
then by (\ref{caseof2}), $(x_{n-j-1},y_{n-j-1},z_{n-j-1})$ $=(0,0,0)$.\\
$(b)$ If $s_j \geq 2m+2$, 
\begin{align}
s_{j+1}= 2s_j+1-k={P_3}^k(s_j)
\end{align}
and $(x_{n-j-1},y_{n-j-1},z_{n-j-1})$ $=(1,1,0)$ or $(0,1,1)$.
\end{proof}

\begin{Lem}\label{nimsum0lemma}
Let $x \in Z_{\geq0}\ $, and let $y_i,z_i \in \{0,1\}$ such that $x_i\oplus y_i\oplus z_i=0 $ for $i = n, n-1,...,n-t$ for a fixed natural number $t$, and $x_n=1, y_n = 0, z_n=1.$

We define the sequence $s_j, j = 0,1,2,...,t$ by
\begin{align}\label{defnofsj002}
s_j=\sum\limits_{i = n-j}^n {({x_i}+{z_i}-k{y_i})} {2^{i+j-n}}.
\end{align}
Suppose that $0 \leq s_i < k$ for $i=1,2,...,t$. Then, there exist unique $y_i,z_i \in \{0,1\}$ \\
for $i = n-t-1, n-t-2,...,0$ such that 
\begin{align}\label{nimsum0xyz}
x\oplus y\oplus z=0 \text{ and } y = \lfloor \frac{x+z}{k} \rfloor,
\end{align}
where $y = \sum\limits_{i = 0}^n {{y_i}} {2^i}$ and $z = \sum\limits_{i = 0}^n {{z_i}} {2^i}$.
\end{Lem}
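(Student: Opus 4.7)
The plan is to extend the partial data $(y_i, z_i)_{i=n-t}^{n}$ downward one bit at a time, with each new bit forced by $x_{n-j-1}$ and $s_j$ via Lemma~\ref{lemmaforwhichk20}.

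First I would record the key parity observation: every $s_j$ is even. Indeed $s_0 = 2$ by~(\ref{sequenceforj0}), and since $k = 4m+3$ is odd, each of ${P_1}^k(h) = 2h+2$, ${P_2}^k(h) = 2h$, and ${P_3}^k(h) = 2h+1-k$ preserves evenness, so Lemma~\ref{threesequence} propagates evenness along the recurrence. In particular $s_j \neq 2m+1$, so whenever $0 \leq s_j < k$ exactly one of the two cases $s_j \leq 2m$ or $s_j \geq 2m+2$ from Lemma~\ref{lemmaforwhichk20} applies.

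Next I would construct the remaining bits by ascending induction on $j$, from $j = t$ up to $j = n-1$. At each step, assume $s_j \in [0, k)$. If $s_j \leq 2m$, set $y_{n-j-1} = 0$ and $z_{n-j-1} = x_{n-j-1}$; then $s_{j+1}$ equals ${P_1}^k(s_j)$ (when $x_{n-j-1}=1$) or ${P_2}^k(s_j)$ (when $x_{n-j-1}=0$), and either way lies in $[0, k)$ by Lemma~\ref{lemmaforwhichk}(i). If $s_j \geq 2m+2$, set $y_{n-j-1} = 1$ and $z_{n-j-1} = 1 - x_{n-j-1}$; then $s_{j+1} = {P_3}^k(s_j) \in [0, k)$ by Lemma~\ref{lemmaforwhichk}(ii). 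In both cases the new bits satisfy $x_{n-j-1} \oplus y_{n-j-1} \oplus z_{n-j-1} = 0$ by construction. Once the induction is exhausted, the full sequence $s_0, s_1, \dots, s_n$ is Type~1, so Lemma~\ref{lemmaforwhichk20} yields $y = \lfloor (x+z)/k \rfloor$, completing existence.

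For uniqueness, any valid extension makes $s_0, \dots, s_n$ Type~1 (by Lemma~\ref{lemmaforwhichk20}), so all $s_j$ lie in $[0, k)$. At stage $j$, Lemma~\ref{lemmaforwhichk20} restricts the admissible triples $(x_{n-j-1}, y_{n-j-1}, z_{n-j-1})$ to two options depending on the range of $s_j$, and once $x_{n-j-1}$ is fixed the pair $(y_{n-j-1}, z_{n-j-1})$ is forced, matching the recipe above. The main obstacle I would expect is precisely the parity fact at the outset: without verifying that $s_j$ is always even (hence never equal to $2m+1$), the clean two-case split on which the induction relies would have a gap in the interval $(2m, 2m+2)$, and both existence and uniqueness could fail.
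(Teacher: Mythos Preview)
Your proof is correct and follows essentially the same approach as the paper's: a bit-by-bit downward extension governed by whether $s_j \leq 2m$ or $s_j \geq 2m+2$, with Lemma~\ref{lemmaforwhichk} keeping each new $s_{j+1}$ in $[0,k)$ and Lemma~\ref{lemmaforwhichk20} delivering $y = \lfloor (x+z)/k \rfloor$ once the sequence is Type~1. Your explicit parity remark (ruling out $s_j = 2m+1$) and your more articulated uniqueness argument are helpful clarifications, but the underlying strategy matches the paper's proof.
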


\begin{proof}
	Let $x \in Z_{\geq0}\ $.
We write $x$ in base $2$, so \\
 \begin{align}\label{alignfxyz}
x = \sum\limits_{i = 0}^n {{x_i}} {2^i},
\end{align}

We define sequences $y_i,z_i \in \{0,1\}$ for $i = n-t-1, n-t-2,...,0$, and we define
$s_j, j = t+1,..., n-1,n$ using 
(\ref{defnofsj002}) step by step.

First, we define $y_{n-t-1}, z_{n-t-1}$ and $s_{t+1}$.\\
We have the following two cases:\\
\underline{Case $(a)$}
Suppose that 
\begin{align}\label{casea12}
0 \leq s_t \leq 2m.
\end{align}
Then, we have Subcase $(a.1)$ and Subcase $(a.2)$:\\
\underline{Subcase $(a,1)$}
If $x_{n-t-1}=1$, then let $(x_{n-t-1},y_{n-t-1},z_{n-t-1}) = (1,0,1)$. Then, by (\ref{caseof1}), 
$s_{t+1} = {P_1}^n(s_t)$,
and by Lemma \ref{lemmaforwhichk} and (\ref{casea12}), we have $0 \leq s_{t+1} < k$.\\
\underline{Subcase $(a,2)$}
If $x_{n-t-1}=0$, then let $(x_{n-t-1},y_{n-t-1},z_{n-t-1}) = (0,0,0)$. Then, by (\ref{caseof2}), 
$s_{t+1} = $${P_2}^n(s_t)$, and by 
Lemma \ref{lemmaforwhichk} and (\ref{casea12}), we have $0 \leq s_{t+1} < k$.\\
\underline{Case $(b)$}
Suppose that 
\begin{align}\label{caseb12}
2m+2 \leq s_t < k
\end{align}
Then, we have Subcase $(b.1)$ and Subcase $(b.2)$:\\
\underline{Subcase $(b,1)$}
If $x_{n-t-1}=1$, then let $(x_{n-t-1},y_{n-t-1},z_{n-t-1}) = (1,1,0)$. Then, by (\ref{caseof3}), $s_{t+1} = $${P_3}^n(s_t)$, and by 
Lemma \ref{lemmaforwhichk} and (\ref{caseb12}), we have $0 \leq s_{t+1} < k$.\\
\underline{Subcase $(b,2)$}
If $x_{n-t-1}=0$, then let $(x_{n-t-1},y_{n-t-1},z_{n-t-1}) = (0,1,1)$. Then, by (\ref{caseof3}), $s_{t+1} = $${P_3}^n(s_t)$, and by 
Lemma \ref{lemmaforwhichk} and (\ref{caseb12}), we have $0 \leq s_{t+1} < k$.\\
By Case $(a)$ and Case $(b)$, we have 
\begin{align}\label{nimsumxyzj1}
	x_{n-t-1} \oplus y_{n-t-1} \oplus z_{n-t-1} =0 
\end{align}
and
\begin{align}\label{nimsumxyzj12}
0 \leq s_{t+1} < k. 
\end{align}
Clearly, $y_{n-t-1}, z_{n-t-1}$ are unique, non-negative integers that satisfy (\ref{nimsumxyzj1}) and (\ref{nimsumxyzj12}) when $x$ is a given non-negative integer.

Next, we define $y_{n-t-2}, z_{n-t-2}$, and $s_{t+2}$ using a method very similar to that used in Case $(a)$ and Case $(b)$.
In this way, we construct sequences $y_i, i=n-t-3, n-t-4, ...,0$, $z_i, i = n-t-3, n-t-4, ...,0$ and $s_j, j = t+3,t+4,...,n$ such that 
\begin{align}\label{nimsum0xyz1}
	x_i \oplus y_i \oplus z_i =0 
\end{align}
and $0 \leq s_j < k$. Then, the sequence $s_n, s_{n-1},...,s_2,s_1,s_0$ is Type 1, and 
by Lemma \ref{lemmaforwhichk20}, $y = \lfloor \frac{x+z}{k} \rfloor $, where $y = \sum\limits_{i = 0}^n {{y_i}} {2^i}$ and $z = \sum\limits_{i = 0}^n {{z_i}} {2^i}$.
The uniqueness of $y$ and $z$ is clear from the procedure used to determine the value of $y_i$ and $z_i$.
\end{proof}

\begin{Lem}\label{lammeofkyxz12}
Let $x,y,z,v,w \in Z_{\geq0}\ $ such that 
\begin{align}\label{nimsuminequaxyz}
x\oplus y\oplus z=0 \text{ and } y = \lfloor \frac{x+z}{k} \rfloor
\end{align}
and
\begin{align}\label{nimsumineuvw2}
x\oplus v\oplus w=0 \text{ and } v < \lfloor \frac{x+w}{k} \rfloor.
\end{align}
Then, there exists $t \in Z_{\geq0}$ such that 
$y_{n-j} = v_{n-j}$ and $z_{n-j} = w_{n-j}$ for $j = 0,1,2,...,t$ and $y_{n-t-1} > v_{n-t-1}$.
In particular, $y > v$.
\end{Lem}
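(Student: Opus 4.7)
The plan is to run the sequence construction of Definition \ref{defnofsequencesj} on both triples and locate where the two sequences first diverge. Let $\{s_j\}_{j=0}^n$ be the sequence generated by $(x,y,z)$ and $\{s'_j\}_{j=0}^n$ the one generated by $(x,v,w)$. By the hypothesis $y = \lfloor (x+z)/k \rfloor$ and Lemma \ref{lemmaforwhichk20}, the first sequence is Type~1. Combining (\ref{xzkyrelation}), Lemma \ref{criteriaofineqa}$(ii)$, and Lemma \ref{therearethreetype}$(ii)$, we get $s'_n = x+w-kv \geq k$, so $\{s'_j\}$ is Type~2. Both sequences start at $s_0 = s'_0 = 2$ by (\ref{sequenceforj0}).

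The key observation is that, since the two triples share the first coordinate $x$, the operation (${P_1}^k$, ${P_2}^k$, or ${P_3}^k$) applied at step $j+1$ is determined by the second-coordinate bit at position $n-j-1$. By Lemma \ref{threesequence}, ${P_3}^k$ is used precisely when $y_{n-j-1}=1$, while $y_{n-j-1}=0$ prescribes ${P_1}^k$ or ${P_2}^k$, the choice among these two being forced by $x_{n-j-1}$. Consequently, when $s_j = s'_j$ holds, the equality $s_{j+1} = s'_{j+1}$ is equivalent to $y_{n-j-1} = v_{n-j-1}$; coupled with $x \oplus y \oplus z = x \oplus v \oplus w = 0$, this also yields $z_{n-j-1} = w_{n-j-1}$.

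Let $t$ be the largest index with $s_j = s'_j$ for $0 \leq j \leq t$. Since $s_0 = s'_0 = 2$, such a $t \geq 0$ exists; and $t < n$ because the two sequences, being of different types, cannot coincide everywhere. The observation above then yields $y_{n-j} = v_{n-j}$ and $z_{n-j} = w_{n-j}$ for $j = 0, 1, \ldots, t$, together with $y_{n-t-1} \neq v_{n-t-1}$. To pin down the direction of this inequality, note that $s_t = s'_t \in [0,k)$ since $\{s_j\}$ is Type~1. If $s_t \leq 2m$, then Lemma \ref{lemmaforwhichk20} forces $y_{n-t-1} = 0$; were $v_{n-t-1} = 1$ we would then have $s'_{t+1} = {P_3}^k(s_t) = 2s_t + 1 - k \leq 4m + 1 - k = -2 < 0$, making $\{s'_j\}$ Type~3 by Lemma \ref{therearethreetype}$(iii)$, a contradiction. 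Hence $s_t \geq 2m+2$, and $y_{n-t-1} = 1 > 0 = v_{n-t-1}$. The conclusion $y > v$ then follows from bit-by-bit comparison, since $y$ and $v$ agree in positions $n, n-1, \ldots, n-t$ and differ at position $n-t-1$ with $y$'s bit the larger one.

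The main obstacle I anticipate is the middle step: turning equality of the $s$-sequences into bit-equality of $y$ and $v$, so that the point of first divergence of the sequences genuinely locates the highest bit at which $y$ and $v$ disagree. Once this dictionary is in place, the Type~1 criterion of Lemma \ref{lemmaforwhichk20} together with the range computations for ${P_1}^k$, ${P_2}^k$, and ${P_3}^k$ in Lemma \ref{lemmaforwhichk} nail down the sign of the discrepancy essentially for free.
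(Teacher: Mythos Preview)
Your proof is correct and takes a somewhat different route from the paper's. The paper works only with the sequence $\{r_j\}$ attached to $(x,v,w)$, locates the index $t$ where this Type~2 sequence first leaves $[0,k)$, reads off $v_{n-t-1}=0$ from Lemma~\ref{lemmaforwhichk}$(iii)$, then \emph{constructs} an auxiliary pair $(v',w')$ by flipping that bit to $1$ (so the step becomes ${P_3}^k$ rather than ${P_1}^k$ or ${P_2}^k$) and invokes the uniqueness clause of Lemma~\ref{nimsum0lemma} to identify $(v',w')$ with $(y,z)$. You instead run both sequences in parallel, use the dictionary between the $y$-bit and the choice of $P_i^k$ to translate ``first divergence of the sequences'' directly into ``highest differing bit of $y$ and $v$'', and then let the Type~1 versus Type~2/Type~3 trichotomy (Lemmas~\ref{therearethreetype} and~\ref{lemmaforwhichk20}) decide the sign. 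The payoff is that you never touch the uniqueness argument of Lemma~\ref{nimsum0lemma}; the only implicit ingredient you share with the paper is that $s'_0=2$, i.e.\ $v_n=0$ and $w_n=1$, which both proofs use without comment (it follows from $x\oplus v\oplus w=0$, $x_n=1$, and $kv<x+w$: if $v_n=1$ then $w_n=0$, whence $x+w<2^{n+1}+2^n=3\cdot 2^n\le k\cdot 2^n$, contradicting $v\geq 2^n$).
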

\begin{proof}

We define a sequences of non-negative integers 
$r_0,r_1,...,r_n$ by 

\begin{align}\label{defnofsjj2}
	r_j=\sum\limits_{i = n-j}^n {({x_i}+{w_i}-k{v_i})} {2^{i+j-n}}.
\end{align}

By $(ii)$ of Lemma \ref{therearethreetype}, $(ii)$ of Lemma \ref{criteriaofineqa}, (\ref{xzkyrelation}), and (\ref{nimsumineuvw2}), the sequence $r_0,r_1,...,r_n$ is Type 2. Hence, 
there exists a natural number $t$ such that $0 \leq r_j < k$ for $j = 0,1,2,...,t$ and $k < r_{t+1}$.
By $(ii)$ of Lemma \ref{lemmaforwhichk}, $2m+2 \leq r_t < k$ and $r_{t+1} = {P_2}^k(r_t)$ or $r_{t+1} = {P_1}^k(r_t)$.

Therefore, by Lemma \ref{lemmaforwhichk}, (\ref{caseof1}), and (\ref{caseof2}), we have two cases:\\
Case $(a)$ Suppose that $x_{n-t-1} = 1$. Then, $r_{t+1} = {P_2}^k(r_t)>k$ and \\
$(x_{n-t-1},v_{n-t-1},w_{n-t-1})$$=(1,0,1)$.\\
Let $(v^{\prime}_{n-i},w^{\prime}_{n-i})$$=(v_{n-i},w_{n-i})$ for $i = 0,1,...,t$ and 
 $(v^{\prime}_{n-t-1},w^{\prime}_{n-t-1})$$=(1,0)$. Then,
 $x_{n-j} \oplus v^{\prime}_{n-j} \oplus w^{\prime}_{n-j} = 0$ for $j = 0,1,2,...,t+1$.
 By Lemma \ref{nimsum0lemma}, there exist
$v^{\prime}_i,w^{\prime}_i \in \{0,1\}$ for $i = n-t-1, n-t-2,...,0$ such that 
\begin{align}\label{nimsum0xyzbb}
	x\oplus v^{\prime}\oplus z=0 \text{ and } v^{\prime} = \lfloor \frac{x+w^{\prime}}{k} \rfloor,
\end{align}
where $v^{\prime}= \sum\limits_{i = 0}^n {{v^{\prime}_i}} {2^i}$ and $w^{\prime} = \sum\limits_{i = 0}^n {{w^{\prime}_i}} {2^i}$.
By Lemma \ref{nimsum0lemma}, 
 $v^{\prime}, w^{\prime}$ are unique non-negative integers that satisfy (\ref{nimsum0xyzbb}). Hence,
we have $y = v^{\prime}$ and $z = w^{\prime}$. Then, $y_{n-j} = v_{n-j}$ and $z_{n-j} = w_{n-j}$ for $j = 0,1,2,...,t$ and $y_{n-t-1} > v_{n-t-1}$.
In particular, $y > v$.

Case $(b)$ Suppose that $x_{n-t-1} = 0$. Then, $r_{t+1} = {P_1}^k(r_t)>k$ and \\
$(x_{n-t-1},v_{n-t-1},w_{n-t-1})$$=(0,0,0)$.\\
Let $(v^{\prime}_{n-t-1},w^{\prime}_{n-t-1})$$=(1,1)$.
Using a similar method to that used in Case $(a)$, we finish the proof.
\end{proof}

\begin{thm}\label{theoremfor4mplus3a}
Suppose that $x \oplus y \oplus z=0$ and $y \le \lfloor {\frac{x+z}{k}} \rfloor$. Then, the following conditions hold:\\
$(i)$ $u \oplus y \oplus z \ne 0$ for any $u\in Z_{\geq 0}$ with $u<x$;\\
$(ii)$ $x \oplus v \oplus z \ne 0$ for any $v\in Z_{\geq 0}$ with $v<y$;\\
$(iii)$ $x \oplus y \oplus w \ne 0$ for any $w\in Z_{\geq 0}$ with $w<z$;\\
$(iv)$ $x \oplus v \oplus w \ne 0$ for any $v,w\in Z_{\geq 0}$ with $v<y,w<z$ and $v = \lfloor {\frac{x+w}{k}} \rfloor$; and\\
$(v)$ $u \oplus v \oplus z \ne 0$ for some $u,v \in {Z_{ \ge 0}}$ with $u<x,v<y$ and $v=\lfloor {\frac{u+z}{k}}\rfloor$.
\end{thm}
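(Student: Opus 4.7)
The plan is to separate the five conclusions into two groups. Parts (i)--(iii) are immediate from the Nim identity: if $u\oplus y\oplus z=0$ then $u=y\oplus z=x$, contradicting $u<x$, and symmetrically for (ii) and (iii). These three parts do not use the floor hypothesis at all.

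For (iv), I would argue by contradiction: suppose $x\oplus v\oplus w=0$ together with $v<y$, $w<z$, and $v=\lfloor (x+w)/k\rfloor$. By Lemma~\ref{criteriaofineqa} and Lemma~\ref{therearethreetype} the sequence generated from $(x,v,w)$ is then of Type~1. I would then split on the slackness of the hypothesis $y\le\lfloor (x+z)/k\rfloor$. In Case~A, if $y=\lfloor (x+z)/k\rfloor$, the sequence generated from $(x,y,z)$ is also Type~1; after padding $y,z,v,w$ to a common binary length, both triples share the same $x$, and the Type~1 condition together with $x_n=1$ pins down $y_n=v_n=0$ and $z_n=w_n=1$, so the uniqueness clause of Lemma~\ref{nimsum0lemma} forces $(y,z)=(v,w)$, contradicting $v<y$. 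In Case~B, if $y<\lfloor (x+z)/k\rfloor$, the sequence from $(x,y,z)$ is Type~2 while that from $(x,v,w)$ remains Type~1; applying Lemma~\ref{lammeofkyxz12} with the roles of $(y,z)$ and $(v,w)$ interchanged yields $v>y$, again a contradiction.

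For (v), invoking the $x\leftrightarrow z$ symmetry of the move and sequence definitions in Sections~\ref{introforkyxzgame} and~\ref{sequencemadebyxyz}, the same contradiction argument as in (iv) establishes the analogous \emph{universal} statement that $u\oplus v\oplus z\ne 0$ for every admissible triple $(u,v)$ with $u<x$, $v<y$, and $v=\lfloor (u+z)/k\rfloor$. The stated existence claim then follows by exhibiting any single admissible pair, for instance $u=0$ in the non-degenerate regime $\lfloor z/k\rfloor<y$ (noting that in the degenerate regime, where $y=0$ or more generally $\lfloor z/k\rfloor\ge y$, no admissible pair exists at all, so the existence clause is understood in the presence of such a pair). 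The step I expect to be the main obstacle is Case~A of (iv): one must be careful with the bit-padding so that the uniqueness clause of Lemma~\ref{nimsum0lemma} genuinely applies to the two candidate pairs $(y,z)$ and $(v,w)$, and must confirm that the Type~1 initial condition $s_0=2$ holds simultaneously for both generated sequences before the uniqueness conclusion can be invoked.
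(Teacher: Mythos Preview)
Your approach to parts (i)--(iv) is essentially identical to the paper's: (i)--(iii) are immediate from the nim-sum identity, and (iv) proceeds by contradiction with the same two-case split on whether $y=\lfloor(x+z)/k\rfloor$ (invoke the uniqueness clause of Lemma~\ref{nimsum0lemma}) or $y<\lfloor(x+z)/k\rfloor$ (invoke Lemma~\ref{lammeofkyxz12} with the roles of $(y,z)$ and $(v,w)$ swapped). The paper's entire proof of (v) is the sentence ``same method as (iv),'' which is also what you do via the $x\leftrightarrow z$ symmetry.

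Your extra work on (v) comes from reading the printed ``for some'' literally. This is almost certainly a typo for ``for any'': the paper's one-line proof of (v) only makes sense for a universal claim, and the sole use of this theorem (in Theorem~\ref{thforkfrAtoB}) requires that \emph{every} move from a position in $A_k$ land in $B_k$, hence needs the universal version of (v). Your universal argument by symmetry is correct and is all that is required; the attempt to manufacture a witness for the existential reading is unnecessary and, as you yourself observe, fails vacuously when $y=0$. So drop that paragraph and simply state (v) as the $x\leftrightarrow z$ mirror of (iv).
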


\begin{proof}
Here, $(i)$, $(ii)$, and $(iii)$ come directly from definition of nim-sum $\oplus$.\\
 We suppose that $x \oplus v \oplus w = 0$ and $v = \lfloor {\frac{x+w}{k}} \rfloor$ for some $v,w \in {Z_{ \ge 0}}$ with $v<y,w<z$. 
If $y < \lfloor {\frac{x+z}{k}} \rfloor $, then by Lemma \ref{lammeofkyxz12}, we have $y<v$. This contradicts the fact $v<y$.
If $y = \lfloor {\frac{x+z}{k}} \rfloor $, then by Lemma \ref{nimsum0lemma},
 we have $y=v$. This contradicts the fact $v<y$. Therefore, $x \oplus v \oplus w \ne 0$, and we have $(iv)$\\
$(v)$ This case can be proved using the same method as that used in $(iv)$.
\end{proof}

\begin{Lem}\label{lemmaforimportcase}
Suppose that 
\begin{align}\label{conditionp1}
y \le 	\lfloor {\frac{x+z}{k}} \rfloor
\end{align}
and
\begin{align}\label{conditionp2}
x_i +y_i +z_i =0\ (mod\ 2) \ \text{for } i=m+1,m+2,...,n.
\end{align}
We define $s_j$ for $j=0,1,2,...,n-m-1$ by
 \begin{align}\label{defnofsjj1b}
s_j=\sum\limits_{i = n-j}^n {({x_i}+{z_i}-k{y_i})} {2^{i+j-n}}.
\end{align}
Then, $s_0,s_1,...,s_{n-m-1}$ is Type 1 or Type 2.
\end{Lem}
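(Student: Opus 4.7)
The plan is to invoke Lemma \ref{therearethreetype} to reduce the claim to showing that $s_{n-m-1} \geq 0$, and then to deduce this non-negativity from the hypothesis $y \leq \lfloor (x+z)/k \rfloor$ by a parity argument on the $s_j$. More precisely, Lemma \ref{therearethreetype} implies that the sequence $\{s_0, s_1, \ldots, s_{n-m-1}\}$ is Type 1 or Type 2 exactly when $s_{n-m-1} \geq 0$, so it suffices to rule out $s_{n-m-1} < 0$.

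Next I would establish the key parity fact: every $s_j$ with $0 \leq j \leq n-m-1$ is an \emph{even} integer. Using the recurrence $s_{j+1} = 2 s_j + (x_{n-j-1} + z_{n-j-1} - k y_{n-j-1})$ derived in (\ref{defsjplus}), and the fact that $k = 4m+3$ is odd, the parity of $x_i + z_i - k y_i$ coincides with the parity of $x_i + y_i + z_i$. By the hypothesis (\ref{conditionp2}), the latter is $0 \pmod 2$ for every $i \in \{m+1, \ldots, n\}$; hence $s_0 = x_n + z_n - k y_n$ is even, and inductively every $s_j$ with $j \leq n-m-1$ is even, since each step from $s_j$ to $s_{j+1}$ only uses a bit of index $n-j-1 \geq m+1$.

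For the final step I would expand the sum defining $s_n$ to obtain
\[
s_n \;=\; 2^{m+1}\, s_{n-m-1} + D, \qquad D \;:=\; \sum_{i=0}^{m} (x_i + z_i - k y_i)\, 2^i,
\]
and bound $D \leq 2 \sum_{i=0}^m 2^i = 2^{m+2} - 2$, using $x_i + z_i \leq 2$ and $k y_i \geq 0$. Now suppose for contradiction that $s_{n-m-1} < 0$. By the parity fact above, $s_{n-m-1} \leq -2$, so $2^{m+1} s_{n-m-1} \leq -2^{m+2}$. Combining this with the bound on $D$ gives $s_n \leq -2^{m+2} + (2^{m+2} - 2) = -2 < 0$, which contradicts $s_n = x + z - k y \geq 0$ (Lemma \ref{valueofs0sn} together with hypothesis (\ref{conditionp1})). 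Hence $s_{n-m-1} \geq 0$, finishing the proof.

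The parity step is the crux. Without it one could only conclude $s_{n-m-1} \leq -1$ and hence $s_n \leq 2^{m+1} - 2$, which is entirely consistent with $s_n \geq 0$ and yields no contradiction. It is precisely the extra factor of two that evenness provides, together with the two ingredients driving that evenness (namely $k$ being odd and the nim-sum condition on the top bits), that creates just enough slack to force $s_n < 0$ in the hypothetical negative case.
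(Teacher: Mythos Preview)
Your argument is correct. The paper takes a different route: it introduces auxiliary integers $u,v,w$ obtained from $x,y,z$ by zeroing out the bottom $m{+}1$ bits, asserts that condition (\ref{conditionp1}) yields $v \le \lfloor (u+w)/k \rfloor$, and then applies Lemmas \ref{valueofs0sn} and \ref{therearethreetype} to the full-length sequence $r_0,\dots,r_n$ generated by $u,v,w$, observing finally that $r_j=s_j$ for $j\le n-m-1$. Both proofs ultimately rest on the same fact, namely $s_{n-m-1}\ge 0$ (which is exactly $u+w-kv\ge 0$), but you establish it directly via the decomposition $s_n=2^{m+1}s_{n-m-1}+D$ together with the bound $D\le 2^{m+2}-2$. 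Your parity step is the crucial ingredient here, and it is worth noting that the paper's ``by (\ref{conditionp1})'' justification for $v\le\lfloor(u+w)/k\rfloor$ is not valid from (\ref{conditionp1}) alone---one can check that for $k=3$, $x=z=3$, $y=2$ the floor inequality fails---so (\ref{conditionp2}) must enter, and it does so precisely through the evenness of $s_{n-m-1}$ that you isolated. In that sense your version is both more elementary (no auxiliary triple, no extended sequence $r_j$) and more transparent about where each hypothesis is used.
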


\begin{proof}
Let $(u_i,v_i,w_i)=(x_i,y_i,z_i)$ for $i=m+1,m+2,...,n$ and 
$(u_i,v_i,w_i)=(0,0,0)$ for $i=0,1,2,...,m$.
Then, by (\ref{conditionp1}), we have $\lfloor \frac{v}{2^{m+1}} \rfloor \le \frac{\lfloor \frac{u}{2^{m+1}} \rfloor+\lfloor \frac{w}{2^{m+1}} \rfloor}{k}$. Multiplying both sides of the inequality by $2^{m+1}$, we get
 \begin{align}\label{ineqvux}
v \le \lfloor {\frac{u+w}{k}} \rfloor .
 \end{align}

We define $r_j$ for $j=0,1,2,...,n$ by
 \begin{align}\label{defnofsjj1}
r_j=\sum\limits_{i = n-j}^n {({u_i}+{w_i}-k{v_i})} {2^{i+j-n}}.
\end{align}
Then, by (\ref{ineqvux}), Lemma \ref{criteriaofineqa}, Lemma \ref{valueofs0sn}, and Lemma \ref{therearethreetype},
 $r_0,r_1,...,r_n$ is Type 1 or Type 2. Since $s_j = r_j$ for $j=0,1,...,n-m-1$, we prove this lemma.
\end{proof}

\begin{thm}\label{theoremfor4mplus3b}
Suppose that $x \oplus y \oplus z \ne 0$ and $y \le \lfloor {\frac{x+z}{k}} \rfloor$.\\
Then, at least one of the following statements is true:\\
$(i)$ $x \oplus y \oplus w = 0$ for some $w \in {Z_{ \ge 0}}$ with $w<z$ and $y \le \lfloor \frac{x+w}{k} \rfloor$;\\
$(ii)$ $x \oplus v \oplus z = 0$ for some $v \in {Z_{ \ge 0}}$ with $v < y \le \lfloor \frac{x+z}{k} \rfloor$;\\
$(iii)$ $u \oplus y \oplus z = 0$ for some $u \in {Z_{ \ge 0}}$ with $u<x$ and $y \le \lfloor \frac{u+z}{k} \rfloor$;\\
$(iv)$ $x \oplus v \oplus w = 0$ for some $v,w \in {Z_{ \ge 0}}$ with $v<y,w<z$ and $v=\lfloor {\frac{x+w}{k}}\rfloor$; or\\
$(v)$ $u \oplus v \oplus z = 0$ for some $u,v \in {Z_{ \ge 0}}$ with $u<x,v<y$ and $v=\lfloor {\frac{u+z}{k}}\rfloor$.
\end{thm}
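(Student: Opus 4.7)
Let $T:=x\oplus y\oplus z$ and let $p$ be its highest nonzero bit, so $T_p=1$ and $T_i=0$ for $i>p$; in particular, $(x_i,y_i,z_i)$ is one of the four nim-sum-zero triples for every $i>p$. I proceed by case analysis on the bits of $x,y,z$ at position $p$.

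If $y_p=1$, then $x_p\oplus z_p=0$, so $v:=x\oplus z$ agrees with $y$ on all bits above $p$ and satisfies $v_p=0<1=y_p$, giving $v<y$; combined with the hypothesis $y\le\lfloor(x+z)/k\rfloor$, this yields case~(ii). If $y_p=0$, then exactly one of $x_p,z_p$ equals $1$, and by the $x\leftrightarrow z$ symmetry of the problem (which interchanges (iii)/(v) with (i)/(iv)) I may assume $x_p=1,z_p=0$. Then $u:=y\oplus z$ satisfies $u<x$ by the same bit argument, and $u\oplus y\oplus z=0$; if $y\le\lfloor(u+z)/k\rfloor$, case~(iii) holds with this $u$.

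The remaining subcase is $y>\lfloor(u+z)/k\rfloor$, in which I construct $u^*,v^*$ witnessing case~(v). Consider the sequence $s_0,s_1,\dots,s_n$ generated by $(u,y,z)$ as in Definition~\ref{defnofsequencesj}; it is Type~3 by Lemma~\ref{valueofs0sn} and Lemma~\ref{criteriaofineqa}(iii). Let $t$ be the least index with $s_{t+1}<0$. Since $s_{t+1}={P_3}^k(s_t)<0$ with $s_t$ a non-negative even integer, Lemma~\ref{lemmaforwhichk}(i) forces $s_t\le 2m$, and Lemma~\ref{threesequence} forces $(u_{n-t-1},y_{n-t-1},z_{n-t-1})\in\{(1,1,0),(0,1,1)\}$. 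I replace this triple by the nim-sum-zero triple that keeps $z_{n-t-1}$ but applies ${P_2}^k$ or ${P_1}^k$ instead of ${P_3}^k$, namely $(0,0,0)$ or $(1,0,1)$; the resulting value at index $t+1$ lies in $[0,k)$ by Lemma~\ref{lemmaforwhichk}(i). Then the bit-by-bit extension in the proof of Lemma~\ref{nimsum0lemma} (with $z$ playing the role of the given integer) uniquely determines $u^*_i,v^*_i$ for $i<n-t-1$ so that the complete sequence for $(u^*,v^*,z)$ is Type~1, and Lemma~\ref{lemmaforwhichk20} yields $u^*\oplus v^*\oplus z=0$ and $v^*=\lfloor(u^*+z)/k\rfloor$.

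\textbf{The main obstacle} is verifying the strict inequalities $u^*<x$ and $v^*<y$. The bound $v^*<y$ is immediate because $v^*$ agrees with $y$ on bits above $n-t-1$ and $v^*_{n-t-1}=0<1=y_{n-t-1}$. The bound $u^*<x$ is subtler because the $(0,1,1)\mapsto(1,0,1)$ substitution \emph{increases} $u^*$ at bit $n-t-1$; I plan to resolve it by proving $n-t-1<p$, after which $u^*$ agrees with $u$ on all bits $\ge p$ and $u^*_p=u_p=0<1=x_p$ gives $u^*<x$ by bit comparison. To establish $n-t-1<p$, I note that at bit $p$ the $(u,y,z)$-triple is $(0,0,0)$, which applies ${P_2}^k$ and gives $s_{n-p}=2s_{n-p-1}\ge 0$ provided $s_{n-p-1}\ge 0$; the latter follows from a parity argument using $ky\le x+z$ and $x_H\oplus y_H\oplus z_H=0$, where the subscript $H$ denotes the integer formed by bits above $p$. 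Indeed, $ky_H-(x_H+z_H)$ is a multiple of $2^{p+1}$ bounded above by $x_L+z_L-ky_L<2^{p+2}$, so it lies in $\{0,2^{p+1}\}$; the value $2^{p+1}$ contradicts the parity relation $Y\equiv X+Z\pmod 2$ (with $Y:=y_H/2^{p+1}$, etc., forced by the nim-sum zero condition) combined with $kY=X+Z+1$ and $k$ odd. Hence $s_{n-p-1}=2^{-(p+1)}(x_H+z_H-ky_H)\ge 0$, and the first sign flip of the $(u,y,z)$-sequence lies strictly below bit $p$.
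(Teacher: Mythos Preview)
Your argument is correct. It follows the paper's overall plan—locate the top disagreeing bit $p$ of $x\oplus y\oplus z$, dispose of the easy case $y_p=1$ via option~(ii), and in the remaining case manufacture a nim-sum-zero triple by manipulating the $\{s_j\}$ sequence—but the organisation and the key technical step differ from the paper's proof.

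The paper, in its Case~(a) (and the symmetric Case~(b)), works directly with the partial $(x,y,z)$-sequence and branches according to whether it is Type~1 or Type~2 (invoking Lemma~\ref{lemmaforimportcase}), then branches again after flipping the offending bit of $z$; this yields~(i) in the Type~2 branches and~(iv) in the final Type~1 branch. You instead first attempt the plain Nim move $u=y\oplus z$ for option~(iii), and only when that fails (i.e.\ the $(u,y,z)$-sequence is Type~3) do you locate the first negative index $t{+}1$, flip the $(u,y)$-bits there while preserving the $z$-bit, and extend via Lemma~\ref{nimsum0lemma} to obtain option~(v). The crucial verification that $u^*<x$—which has no clean counterpart in the paper's write-up—reduces to $n-t-1<p$, and your parity argument for $s_{n-p-1}\ge 0$ (using $X\oplus Y\oplus Z=0\Rightarrow X+Y+Z$ even together with $k$ odd) is a direct, self-contained replacement for Lemma~\ref{lemmaforimportcase}. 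Combined with the ``once negative, always negative'' property implicit in Lemma~\ref{therearethreetype}, $s_{n-p-1}\ge 0$ indeed forces $s_j\ge 0$ for all $j\le n-p$, so the first sign flip lies at a bit below $p$. Your approach avoids the nested Type~1/Type~2 subcase analysis and makes the strict inequalities $u^*<x$, $v^*<y$ transparent; the paper's approach, on the other hand, produces options~(i) and~(iii) (not merely~(iv)/(v)) in more situations, which is irrelevant for the theorem but slightly more informative about which move works.
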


\begin{proof}
Suppose that $x_i +y_i +z_i =0\ (mod\ 2)$ for $i=n, n-1,...,n-m+1$ and $x_{n-m} +y_{n-m} +z_{n-m} \neq 0\ (mod\ 2)$.
We consider three cases:\\
\underline{Case $(a)$} Suppose that $z_{n-m} =1$ and $x_{n-m} =y_{n-m} =0$. 

 We define $s_j $ for $j = 0,1,2,...,m$ by 
 \begin{align}\label{defnofsjj10}
 	s_j=\sum\limits_{i = n-j}^n {({x_i}+{z_i}-k{y_i})} {2^{i+j-n}}.
 \end{align}
By Lemma \ref{lemmaforimportcase}, the sequence $\{s_0,s_1,...,s_{m-1}\}$
is Type 1 or Type 2.
We have two subcases:\\
\underline{Subcase$(a.1)$} Suppose that the sequence $s_j $ for $j = 0,1,2,...,m-1$ is Type 2. 
Let $w_i = x_i+y_i \ (mod \ 2)$ for $i=n-m, n-m-1,...,1,0$ and $w_i = z_i$ for $i=n,n-1,...,n-m+1$.
We define 
 $r_j $ for $j = 0,1,2,...,n$ by 
 \begin{align}\label{defnofsjj11}
 	r_j=\sum\limits_{i = n-j}^n {({x_i}+{w_i}-k{y_i})} {2^{i+j-n}}.
 \end{align}
Since the sequence $\{s_0,s_1,...,s_{m-1}\}$ is Type 2 and $r_j=s_j$ for $j=0,1,...,m-1$,
by Lemma \ref{type2subsequence}, the sequence $\{r_0,r_1,...,r_{n}\}$ is Type 2.
Therefore, we have $y \le \lfloor {\frac{x+w}{k}} \rfloor$. Then, we have $(i)$.

\noindent
\underline{Subcase $(a.2)$} Suppose that the sequence $s_j $ for $j = 0,1,2,...,m-1$ is Type 1. 
Let $w_i = z_i$ for $i=n,n-1,...,n-m+1$ and $w_{n-m} = 0$.
We define $r_j $ for $j = 0,1,2,...,m$ by 
 \begin{align}\label{defnofsjj12}
 	r_j=\sum\limits_{i = n-j}^n {({x_i}+{w_i}-k{y_i})} {2^{i+j-n}}.
 \end{align}
 Then, we have two subsubcases:
 
 \noindent
 \underline{Subsubcase $(a.2.1)$} Suppose that the sequence $r_j $ for $j = 0,1,2,...,m$ is Type 2. 
 Then,
let $w_i = x_i+y_i \ (mod \ 2)$ for $i=n-m-1,...,1,0$.
We define 
$r_j $ for $j = m+1,m+2,...,n$ by 
\begin{align}\label{defnofsjj13}
	r_j=\sum\limits_{i = n-j}^n {({x_i}+{w_i}-k{y_i})} {2^{i+j-n}}.
\end{align}
Since the sequence $\{r_0,r_1,...,r_{m}\}$ is Type 2, 
by Lemma \ref{type2subsequence}, the sequence $\{r_0,r_1,...,r_{n}\}$ is Type 2.
Therefore, we have $y \le \lfloor {\frac{x+w}{k}} \rfloor$. Then, we have $(i)$.

\noindent
\underline{Subsubcase $(a.2.2)$} Suppose that the sequence $r_j $ for $j = 0,1,2,...,m$ is Type 1. 
By Lemma \ref{nimsum0lemma}, there exist unique $v_i,w_i \in \{0,1\}$ for $i = n-m-1,...,0$ such that 
\begin{align}\label{nimsum0xyz2}
	x\oplus u\oplus w=0 \text{ and } u = \lfloor \frac{x+w}{k} \rfloor,
\end{align}
 Then, we have $(iv)$.
 
 \noindent
 \underline{Case $(b)$} Suppose that $x_{n-m} =1$ and $z_{n-m} =y_{n-m} =0$. We can use the same method used in Case $(a)$.
 
 \noindent
 \underline{Case $(c)$} Suppose that $y_{n-m} =1$ and $z_{n-m} =x_{n-m} =0$. Let $v_i = x_i+y_i$ for $i=n-m, n-m-1, n-m-2,...,1,0$.
 Then, we have $x \oplus v \oplus z = 0$ and $v < y \le \lfloor \frac{x+z}{k} \rfloor$, and this is $(ii)$ of this lemma.
\end{proof}

\begin{defn}\label{defofABkybiggerxz}
Here, we define sets of positions of chocolate bars.\\
Let $A_{k}=\{(x,y,z);x,y,z\in Z_{\geq 0},y \leq \lfloor \frac{x+z}{k} \rfloor$ and $x\oplus y \oplus z=0\}$, $B_{k}=\{(x,y,z);x,y,z\in Z_{\geq 0},y \leq \lfloor \frac{x+z}{k} \rfloor$, and $x\oplus y \oplus z\neq 0\}$.
\end{defn}

\begin{thm}\label{theoremforkyxzchocol}
Let $k=4m+3$. Then, $A_k$ and $B_k$ are, respectively, the sets of $\mathcal{P}$-positions
and $\mathcal{N}$-positions
of the chocolate bar game that satisfies the inequality $y \leq \lfloor \frac{x+z}{k} \rfloor$.
\end{thm}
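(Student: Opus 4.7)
The plan is to verify the standard back-and-forth induction that identifies $A_k$ with the $\mathcal{P}$-positions and $B_k$ with the $\mathcal{N}$-positions. Concretely, I will check three items: (a) the terminal position $(0,0,0)$ lies in $A_k$; (b) from every position in $A_k$, every move listed in Definition \ref{defofmovek} leads to a position in $B_k$; and (c) from every position in $B_k$ there exists a move in Definition \ref{defofmovek} leading to a position in $A_k$. Once these are established, a routine induction (on $x+y+z$, say) together with Theorem \ref{thmforsumofgame}(i) finishes the argument.

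Item (a) is immediate: $0 \oplus 0 \oplus 0 = 0$ and $0 \le \lfloor 0/k \rfloor$. For item (b), let $(x,y,z)\in A_k$. Any move yields a position of one of the five shapes in Definition \ref{defofmovek}. The first three shapes $(u,y,z),(x,v,z),(x,y,w)$ are ruled out of $A_k$ by parts (i), (ii), (iii) of Theorem \ref{theoremfor4mplus3a}. For the fifth shape $(x,\min(y,\lfloor \frac{x+w}{k}\rfloor),w)$ with $w<z$, I split on whether $y\le \lfloor \frac{x+w}{k}\rfloor$: if so, the result is $(x,y,w)$, covered by (iii); if not, the result is $(x,\lfloor\frac{x+w}{k}\rfloor,w)$ with $\lfloor\frac{x+w}{k}\rfloor<y$ and the new second coordinate equal to $\lfloor\frac{x+w}{k}\rfloor$, exactly the hypothesis of (iv). The fourth shape is handled symmetrically via (i) and (v). The $\min$ in both composite moves automatically guarantees the resulting triple still satisfies $v\le\lfloor\frac{u+w}{k}\rfloor$, so it is a legal game position and hence lies in $B_k$.

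For item (c), let $(x,y,z)\in B_k$. Theorem \ref{theoremfor4mplus3b} asserts that at least one of its five conclusions holds, each providing a target position with nim-sum $0$ and satisfying the required inequality, i.e.\ lying in $A_k$. It remains to check that each such target is reachable through $movek$. Conclusions (i)--(iii) give direct moves of the first three types. For conclusion (iv), the target $(x,v,w)$ with $v=\lfloor\frac{x+w}{k}\rfloor<y$ is produced by the fifth type of move, because $\min(y,\lfloor\frac{x+w}{k}\rfloor)=\lfloor\frac{x+w}{k}\rfloor=v$; conclusion (v) is similarly produced by a move of the fourth type. In every case the move is legal and the target lies in $A_k$.

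The substantive mathematical work has already been done in Theorems \ref{theoremfor4mplus3a} and \ref{theoremfor4mplus3b}, which use the sequence machinery of Sections \ref{studyofsequences}--\ref{sequencemadebyxyz} to link the nim-sum $x\oplus y\oplus z=0$ to the inequality $y\le\lfloor\frac{x+z}{k}\rfloor$. Thus the proof here is essentially bookkeeping. The one genuine subtlety I expect to be the main obstacle is the correct interpretation of the $\min$ in the composite moves (types 4 and 5): I must carefully justify, in both directions, that conditions of the form ``$v=\lfloor\frac{x+w}{k}\rfloor$ and $v<y$'' in Theorems \ref{theoremfor4mplus3a} and \ref{theoremfor4mplus3b} exactly match the image of $\min(y,\lfloor\frac{x+w}{k}\rfloor)$ in Definition \ref{defofmovek} once one accounts for the degenerate case $y\le\lfloor\frac{x+w}{k}\rfloor$ (where the composite move collapses to a pure $z$-reduction and must instead be treated under part (iii) of the respective theorem).
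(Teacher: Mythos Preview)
Your proposal is correct and follows essentially the same approach as the paper, which proves the theorem by establishing the two closure statements (your items (b) and (c)) as Theorems \ref{thforkfrAtoB} and \ref{thforkfrBtoA}, invoking Theorems \ref{theoremfor4mplus3a} and \ref{theoremfor4mplus3b} respectively, and then finishing with the standard $\mathcal{P}/\mathcal{N}$ induction. Your treatment is in fact more careful than the paper's on the point you flagged as the ``one genuine subtlety'': the paper simply asserts that each of the five move-shapes in Definition \ref{defofmovek} is handled by the corresponding clause of Theorem \ref{theoremfor4mplus3a} (and conversely for Theorem \ref{theoremfor4mplus3b}), whereas you explicitly split the composite moves according to whether $y\le\lfloor\frac{x+w}{k}\rfloor$ and show how the degenerate case collapses to a pure coordinate-reduction.
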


To prove Theorem \ref{theoremforkyxzchocol}, we need two additional theorems.
First, we prove that starting with an element of $A_k$, any move leads to an element of $B_k$.

\begin{rem}\label{remarkforot}
For $k=4m+1$, a conjecture for the generic formula for $\mathcal{P}$-positions is presented in Subsection \ref{4mplus1}.
When $k$ is an even number, there seems to be any formula for $\mathcal{P}$-positions. See Subsection \ref{evencase}.
\end{rem}

\begin{thm}\label{thforkfrAtoB}
For any $(x,y,z) \in A_k$, we have $movek((x,y,z)) \subset B_k$.
\end{thm}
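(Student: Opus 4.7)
The plan is to reduce this to Theorem \ref{theoremfor4mplus3a}, whose five clauses were designed precisely to handle the five kinds of moves available in $movek$. Fix $(x,y,z)\in A_k$, so $x\oplus y\oplus z=0$ and $y\le \lfloor\frac{x+z}{k}\rfloor$. By Definition \ref{defofmovek}, any successor $(x',y',z')$ has one of five shapes, and for each shape I need to verify both membership conditions for $B_k$: that the position is still admissible, i.e.\ $y'\le \lfloor\frac{x'+z'}{k}\rfloor$, and that $x'\oplus y'\oplus z'\ne 0$.

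First I would dispatch the three ``pure'' moves. For $(u,y,z)$ with $u<x$ (a valid chocolate bar, so $y\le\lfloor\frac{u+z}{k}\rfloor$ comes for free from being a legal move) part $(i)$ of Theorem \ref{theoremfor4mplus3a} gives $u\oplus y\oplus z\ne 0$. For $(x,v,z)$ with $v<y$, admissibility is immediate from $v<y\le\lfloor\frac{x+z}{k}\rfloor$, and part $(ii)$ gives the nim-sum claim. For $(x,y,w)$ with $w<z$ the argument is symmetric to the first case, using part $(iii)$.

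Next I would handle the two ``combined'' moves where a cut in the $x$- or $z$-direction forces a simultaneous reduction in $y$. Consider $(u,\min(y,\lfloor\frac{u+z}{k}\rfloor),z)$ with $u<x$. Split on the value of the min: if it equals $y$, the move is already covered by the Type~1 analysis above; if it equals $\lfloor\frac{u+z}{k}\rfloor<y$, then admissibility holds by construction, and part $(v)$ of Theorem \ref{theoremfor4mplus3a} (applied with the roles of $v,u$ as there) gives $u\oplus\lfloor\frac{u+z}{k}\rfloor\oplus z\ne 0$. The remaining move $(x,\min(y,\lfloor\frac{x+w}{k}\rfloor),w)$ with $w<z$ is handled symmetrically using part $(iv)$.

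The substance is entirely inside Theorem \ref{theoremfor4mplus3a}, so this theorem itself is merely a careful bookkeeping of cases. The only real subtlety is the min-splits in the last two moves, where one must recognize that the subcase $\min=y$ reduces to a previously-treated move type, while the subcase $\min=\lfloor\frac{\cdot+\cdot}{k}\rfloor<y$ is precisely the hypothesis of parts $(iv)$--$(v)$; missing this split, or conflating the two, is the only thing that could go wrong.
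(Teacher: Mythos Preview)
Your proof is correct and follows exactly the same route as the paper: both reduce immediately to Theorem~\ref{theoremfor4mplus3a} and check the five move types one by one. In fact your write-up is slightly more careful than the paper's own proof, which simply lists the five cases and says ``we can use Theorem~\ref{theoremfor4mplus3a}''; you make explicit the min-split in the combined moves and the admissibility check, neither of which the paper spells out.
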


\begin{proof}
Let $(x,y,z) \in A_k$. Then, we have
\begin{align}
 x \oplus y \oplus z \oplus =0
 \end{align}
 and
 \begin{align}
y \leq \lfloor \frac{x+z}{k} \rfloor.
 \end{align}
 Suppose that we move from $(x,y,z)$ to $(p,q,r)$, i.e., $(p,q,r) \in movek((x,y,z))$. We prove that $(p,q,r) \in B_k$.
 
 \noindent
 Since $movek((x,y,z))=\{(u,y,z);u<x\} \cup \{(x,v,z);v<y\} \cup \{(x,y,w);w<z\} \cup \{(u,\min(y, \lfloor \frac{u+z}{k} \rfloor ),z);u<x\} \cup \{(x,\min(y, \lfloor \frac{x+w}{k} \rfloor ),w);w<z\}$, where $u,v,w \in Z_{\ge 0}$,
 we have one of the following cases:
 
\noindent
$(1)$ $(p,q,r)= (u,y,z)$ with $u<x$;
 
\noindent
$(2)$ $(p,q,r)= (x,v,z)$ with $v<y$;
 
\noindent
$(3)$ $(p,q,r)= (x,y,w)$ with $w<z$;
 
\noindent
$(4)$ $(p,q,r)= (u,\min(y,\lfloor \frac{u+z}{k} \rfloor ),z)$ with $u<x$; or 

\noindent
$(5)$ $(p,q,r)= (x,\min(y,\lfloor \frac{x+w}{k} \rfloor ),w)$ with $w<z$.

\noindent
For each of these cases, we can use Theorem \ref{theoremfor4mplus3a} to get $p \oplus q \oplus r \oplus \ne 0$.
\end{proof}

Next, we prove that starting with an element of $B_k$, there is a proper move that leads to an element of $A_k$.

\begin{thm}\label{thforkfrBtoA}
Let $(x,y,z) \in B_k$, then $movek((x,y,z))\cap A_k \ne \phi$.
\end{thm}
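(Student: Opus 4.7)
The plan is to invoke Theorem \ref{theoremfor4mplus3b} directly and then, in each of the five resulting cases, exhibit a position that lies simultaneously in $movek((x,y,z))$ and in $A_k$.

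Since $(x,y,z)\in B_k$, we have $x\oplus y\oplus z\neq 0$ together with $y\le \lfloor\frac{x+z}{k}\rfloor$, so Theorem \ref{theoremfor4mplus3b} applies and one of the conditions $(i)$--$(v)$ of that theorem holds. I would handle the three ``pure'' cases first. In case $(i)$, the position $(x,y,w)$ satisfies $w<z$, hence $(x,y,w)\in movek((x,y,z))$ by the third set in Definition \ref{defofmovek}; and by hypothesis $x\oplus y\oplus w=0$ with $y\le \lfloor\frac{x+w}{k}\rfloor$, so $(x,y,w)\in A_k$. In case $(ii)$, the position $(x,v,z)$ lies in $movek((x,y,z))$ because $v<y$; and $v<y\le \lfloor\frac{x+z}{k}\rfloor$ together with $x\oplus v\oplus z=0$ places it in $A_k$. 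In case $(iii)$, by symmetry, $(u,y,z)\in movek((x,y,z))\cap A_k$.

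The slightly more delicate cases are $(iv)$ and $(v)$, where one has to verify that the "combined" move appearing in Definition \ref{defofmovek} actually produces the desired position. In case $(iv)$, Theorem \ref{theoremfor4mplus3b} supplies $v,w$ with $w<z$, $v<y$, $v=\lfloor\frac{x+w}{k}\rfloor$, and $x\oplus v\oplus w=0$. Since $v<y$ we have $\min\bigl(y,\lfloor\frac{x+w}{k}\rfloor\bigr)=\lfloor\frac{x+w}{k}\rfloor=v$, and therefore the position $\bigl(x,\min(y,\lfloor\frac{x+w}{k}\rfloor),w\bigr)=(x,v,w)$ belongs to $movek((x,y,z))$ via the fifth set in Definition \ref{defofmovek}. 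It lies in $A_k$ because $x\oplus v\oplus w=0$ and $v=\lfloor\frac{x+w}{k}\rfloor$ trivially satisfies $v\le \lfloor\frac{x+w}{k}\rfloor$. Case $(v)$ is handled identically using the fourth set of Definition \ref{defofmovek}, namely $(u,v,z)=\bigl(u,\min(y,\lfloor\frac{u+z}{k}\rfloor),z\bigr)$.

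The main obstacle I anticipate is purely bookkeeping: one must match each conclusion of Theorem \ref{theoremfor4mplus3b} with the correct clause of Definition \ref{defofmovek} and confirm that the inequality $q\le \lfloor\frac{p+r}{k}\rfloor$ required for membership in $A_k$ is automatic. The equality $v=\min(y,\lfloor\frac{x+w}{k}\rfloor)$ in cases $(iv)$ and $(v)$ is the only nontrivial verification, and it follows immediately from the strict inequality $v<y$ guaranteed by Theorem \ref{theoremfor4mplus3b}. No further combinatorial argument is needed, since Theorem \ref{theoremfor4mplus3b} has already absorbed all the hard number-theoretic content.
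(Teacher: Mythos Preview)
Your proposal is correct and follows exactly the paper's approach: invoke Theorem~\ref{theoremfor4mplus3b} and match each of its five conclusions to a clause of Definition~\ref{defofmovek}. You have simply spelled out more carefully than the paper does the verification in cases $(iv)$ and $(v)$ that $\min\bigl(y,\lfloor\tfrac{x+w}{k}\rfloor\bigr)=v$ (respectively $\min\bigl(y,\lfloor\tfrac{u+z}{k}\rfloor\bigr)=v$), which is indeed immediate from $v<y$.
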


\begin{proof}
Let $(x,y,z) \in B_k$. Then, we have
\begin{align}
x \oplus y \oplus z \oplus \ne 0
\end{align}
and
\begin{align}
y \leq \lfloor \frac{x+z}{k} \rfloor.
\end{align}
Then, we have one of the five cases of Theorem \ref{theoremfor4mplus3b}.
Since $movek((x,y,z))=\{(u,y,z);u<x\} \cup \{(x,v,z);v<y\} \cup \{(x,y,w);w<z\} \cup \{(u,\min(y, \lfloor \frac{u+z}{k} \rfloor ),z);u<x\} \cup \{(x,\min(y, \lfloor \frac{x+w}{k} \rfloor ),w);w<z\}$, there exists $(p,q,r) \in movek ((x,y,z))$ such that 
$p\oplus q\oplus r= 0$. Therefore, $(p,q,r) \in movek((x,y,z))\cap A_k$
\end{proof}

By Theorem \ref{thforkfrAtoB} and \ref{thforkfrBtoA}, we finish the proof of Theorem \ref{theoremforkyxzchocol}. Starting the game with a position $(x,y,z)\in A_{k}$, by Theorem \ref{thforkfrAtoB}, any option (move) leads to position $(p,q,r)$ in $B_k$. From this position $(p,q,r)$, by Theorem \ref{thforkfrBtoA}, the opposing player can choose a proper option that leads to a position in $A_k$. Note that any option reduces some of the numbers of the position. In this way, the opposing player can always reach a position in $A_k$, winning by reaching $(0,0,0)\in A_{k}$. Therefore, $A_k$ is the set of $\mathcal{P}$-positions.

Starting the game with a position $(x,y,z)\in B_{k}$, by Theorem \ref{thforkfrBtoA}, we can choose a proper option that leads to a position $(p,q,r)$ in $A_k$. From $(p,q,r)$, any option by the opposing player leads to a state in $B_k$. In this way, we win the game by reaching $(0, 0, 0)$. Therefore, $B_k$ is the set of $\mathcal{N}$-positions.

By Theorem \ref{theoremforkyxzchocol}, $(x,y,z)$ is a $\mathcal{P}$-position if and only if $x\oplus y \oplus z=0 $. Then,
it is natural to wonder whether the Grundy number of a position $(x,y,z)$ is equal to $x \oplus y \oplus z$.
The Grundy number of a position does not equal $x\oplus y\oplus z$, and Example \ref{exampofgrundynotnim} presents a counter example.

Example \ref{exambymathematica} shows that the number of positions whose Grundy number is equal to the nim-sum is smaller than the number of positions whose Grundy number is not equal to the nim-sum.

\begin{exam}\label{exampofgrundynotnim}
In the chocolate game that satisfies the inequality 
$y \leq \lfloor \frac{x+z}{3} \rfloor $, the Grundy number of a position $(x,y,z)$ is not always equal to $x \oplus y \oplus z$.
We show this by example. 
By Definition \ref{defofmexgrundy} and Definition \ref{defofmovek},
\begin{align}\label{grundydeff}
\mathcal{G}((x,y,z))= \textit{mex}\{\mathcal{G}((u,v,w)): (u,v,w) \in move((x,y,z))\}.
\end{align}

We calculate Grundy numbers for the positions of chocolates in Figures \ref{grundyex000}, \ref{grundyex100}, \ref{grundyex001}, \ref{grundyex101}, \ref{grundyex002}, \ref{grundyex102}, and \ref{grundyex112} by using (\ref{grundydeff}).

\noindent
$(i)$ Since $(0,0,0)$ is the end position, by Definition \ref{defofmexgrundy}, we have $\mathcal{G}((0,0,0)) = 0$. 

\noindent
$(ii)$ Here, $movek((1,0,0)) =\{(0,0,0)\}$ and $\mathcal{G}((0,0,0)) = 0$. Hence, by Definition \ref{defofmexgrundy}, $\mathcal{G}((1,0,0)) = 1$. 

\noindent
$(iii)$ Similarly, we have $\mathcal{G}((0,0,1)) = 1$. 

\noindent
$(iv)$ Here, $movek((1,0,1)) =\{(1,0,0),(0,0,1)\}$, $\mathcal{G}((1,0,0)) = 1$, and $\mathcal{G}((0,0,1)) = 1$. Hence, by Definition \ref{defofmexgrundy}, $\mathcal{G}((1,0,1)) = 0$.

\noindent
$(v)$ Here, $movek((0,0,2)) =\{(0,0,0),(0,0,1)\}$, $\mathcal{G}((0,0,0)) = 0$, and $\mathcal{G}((0,0,1)) = 1$. Hence, by Definition \ref{defofmexgrundy}. $\mathcal{G}((0,0,2)) = 2$.

\noindent
$(vi)$ Here, $movek((1,0,2)) =\{(1,0,0),(1,0,1),(0,0,2)\}$, $\mathcal{G}((1,0,0)) = 1$, $\mathcal{G}((1,0,1)) = 0$, and $\mathcal{G}((0,0,2)) = 2$. Hence, by Definition \ref{defofmexgrundy}, $\mathcal{G}((1,0,2)) = 3$.

\noindent
$(vii)$ Here, $movek((1,1,2)) =\{(1,0,0),(1,0,1),(1,0,2),(0,0,2)\}$, $\mathcal{G}((1,0,0)) = 1$, 
$\mathcal{G}((1,0,1)) = 0$, $\mathcal{G}((1,0,2)) = 3$, and $\mathcal{G}((0,0,2)) = 2$. Hence, by Definition \ref{defofmexgrundy}, $\mathcal{G}((1,1,2)) = 4$.

\noindent
By $(vii)$, we have $\mathcal{G}((1,1,2)) = 4 \ne 1 \oplus 1 \oplus 2$.
	
\begin{figure}[!htb]
	\begin{minipage}[!htb]{0.45\columnwidth}
		\centering
	\includegraphics[width=0.3\columnwidth,bb=0 0 57 34]{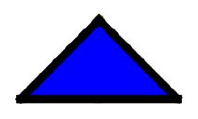}
	\caption{Position (0,0,0)}
	\label{grundyex000}
	\end{minipage}
	\begin{minipage}[!htb]{0.45\columnwidth}
		\centering
	\includegraphics[width=0.5\columnwidth,bb=0 0 56 31]{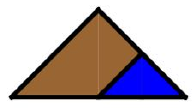}
	\caption{Position (1,0,0)}
	\label{grundyex100}
	\end{minipage}
\end{figure}

\begin{figure}[!htb]
	\begin{minipage}[!htb]{0.45\columnwidth}
		\centering
		\includegraphics[width=0.6\columnwidth,bb=0 0 56 31]{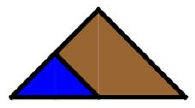}
		\caption{Position (0,0,1)}
		\label{grundyex001}
	\end{minipage}
	\begin{minipage}[!htb]{0.45\columnwidth}
		\centering
		\includegraphics[width=0.6\columnwidth,bb=0 0 85 46]{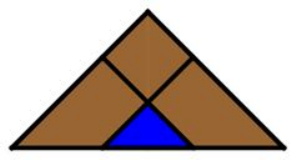}
		\caption{Position (1,0,1)}
		\label{grundyex101}
	\end{minipage}		
\end{figure}

\begin{figure}[!htb]
	\begin{minipage}[!htb]{0.45\columnwidth}
		\centering
		\includegraphics[width=0.6\columnwidth,bb=0 0 85 46]{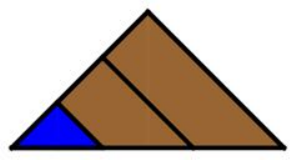}
		\caption{Position (0,0,2)}
		\label{grundyex002}
	\end{minipage}
	\begin{minipage}[!htb]{0.45\columnwidth}	
		\centering	
		\includegraphics[width=0.6\columnwidth,bb=0 0 57 24]{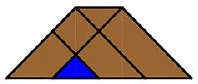}
		\caption{Position (1,0,2)}		
		\label{grundyex102}
	\end{minipage}
\end{figure}		

\begin{figure}[!htb]
		\centering
		\includegraphics[width=0.3\columnwidth,bb=0 0 57 30]{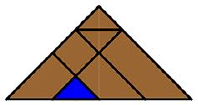}
		\caption{Position (1,1,2)}		
		\label{grundyex112}
\end{figure}		
	
In the next section, Example \ref{exambymathematica2} and Example \ref{examplecgsuitet2} present calculations by computer that show the Grundy number of a position $(x,y,z)$ is not always equal to $x \oplus y \oplus z$ in this game.
\end{exam}

\section{Computer Program for the Triangle Chocolate Bar Game}\label{sectionforcomputer}

\subsection{Computer Program to Calculate the $\mathcal{P}$-positions in the Chocolate Bar Game}
In this subsection, we present computer programs that show $\{(x,y,z);x,y,z\in Z_{\geq 0},y \leq \lfloor \frac{x+z}{k} \rfloor$ and $x\oplus y \oplus z=0\}$ is the set of $\mathcal{P}$-positions.

Example \ref{exambymathematica} presents a Mathematica program, and 
Example \ref{exambycgsuite} presents a Combinatorial Game Suite $($CGSuite$)$ program. 

\begin{exam}\label{exambymathematica}
	Here, let $k=3$.\\
	$(i)$ This Mathematica program
	presents the list $\{(x,y,z):x,y,z\in Z_{\geq 0},x \leq 20, y \leq 20, z \leq 20, y \leq \lfloor \frac{x+z}{k} \rfloor$, where $(x,y,z)$ is $\mathcal{P}$-position$\} - $
	$\{(x,y,z):x,y,z\in Z_{\geq 0},x \leq 20, y \leq 20, z \leq 20, y \leq \lfloor \frac{x+z}{k} \rfloor$ and $x\oplus y \oplus z=0\}$ that
	is a list of $\mathcal{P}$-positions whose nim-sum is not zero.
	
\begin{verbatim}
	k = 3; ss = 20; al = 
	Flatten[Table[{a, b, c}, {a, 0, ss}, {b, 0, ss}, {c, 0, ss}], 2];
	allcases = Select[al, (1/k) (#[[1]] + #[[3]]) >= #[[2]] &];
	move[z_] := Block[{p}, p = z;
	Union[Table[{t1, Min[Floor[(1/k) (t1 + p[[3]])], p[[2]]], 
	p[[3]]}, {t1, 0, p[[1]] - 1}],
	Table[{p[[1]], t2, p[[3]]}, {t2, 0, p[[2]] - 1}],
	Table[{p[[1]], Min[Floor[(1/k) (t3 + p[[1]])], p[[2]]], t3}, 
	{t3, 0, p[[3]] - 1}]]]
	Mex[L_] := Min[Complement[Range[0, Length[L]], L]];
	Gr[pos_] := Gr[pos] = Mex[Map[Gr, move[pos]]]
	pposition = Select[allcases, Gr[#] == 0 &];
	Select[pposition, BitXor[#[[1]], #[[2]], #[[3]]] > 0 &]
\end{verbatim}

The output shows that the list is empty, which implies that the nim-sum of a $\mathcal{P}$-position is zero.
	
\begin{verbatim}
{}
\end{verbatim}
	
	\noindent
	$(ii)$ The next Mathematica program presents the list $\{(x,y,z):x,y,z\in Z_{\geq 0},x \leq 20, y \leq 10, z \leq 20, y \leq \lfloor \frac{x+z}{k} \rfloor$, where $ (x,y,z)$ is $\mathcal{N}$-position$\} \cap \{(x,y,z):x,y,z\in Z_{\geq 0},x \leq 20, y \leq 20, z \leq 20, y \leq \lfloor \frac{x+z}{k} \rfloor$ and $x\oplus y \oplus z=0\}$
	that is a list of $\mathcal{N}$-positions whose nim-sum is zero.
	\begin{verbatim}
	Select[Complement[allcases, pposition], 
	BitXor[#[[1]], #[[2]], #[[3]]] == 0 &]
	\end{verbatim}
	This produces the following list.
	\begin{verbatim}
	{}
	\end{verbatim}
	The output shows that the list is empty, which implies that the nim-sum of a $\mathcal{N}$-position is not zero.
	
	By $(i)$ and $(ii)$, $(x,y,z)$ is a $\mathcal{P}$-position if and only if $x\oplus y \oplus z=0$.
\end{exam}

\begin{exam}\label{exambycgsuite}
Here, let $k=3$. This $($CGSuite version1.1.1$)$ program shows 
$\{(x,y,z);x,y,z\in Z_{\geq 0},y \leq \lfloor \frac{x+z}{k} \rfloor$ and $x\oplus y \oplus z=0\}$ is the set of $\mathcal{P}$-positions.

\noindent
$(i)$ First, we open the following file using CGSuite.
\begin{small}
\begin{verbatim}
class Choco3D extends ImpartialGame
var x,y,z,k;
method Choco3D(x,y,z,k)
end

override method Options(Player player)
result := [];

// x
for x1 from 0 to x-1 do
result.Add(Choco3D(x1,y.Min(((x1+z)/k).Floor),z,k));
end

// y
for y1 from 0 to y-1 do
result.Add(Choco3D(x,y1,z,k));
end

// z
for z1 from 0 to z-1 do
result.Add(Choco3D(x,y.Min(((x+z1)/k).Floor),z1,k));
end

result.Remove(this);

if x==0 and y==0 and z==0 then
return {};
else
return result;
end
end

override property ToString.get
return "Choco3D("+x.ToString+","+y.ToString+","
+z.ToString+","+k.ToString+")";
end
end
\end{verbatim}
\end{small}

\noindent
	$(ii)$ By typing the following command, we get the lists in $(a)$ and $(b)$.
	
\noindent
	$(a)$ $\{(x,y,z):x,y,z\in Z_{\geq 0},x \leq 20, y \leq 20, z \leq 20, y \leq \lfloor \frac{x+z}{k} \rfloor$, where $(x,y,z)$ is $\mathcal{P}$-position$\} - \{(x,y,z):x,y,z\in Z_{\geq 0}, x \leq 20, y \leq 20, z \leq 20, y \leq \lfloor \frac{x+z}{k} \rfloor$ and $x\oplus y \oplus z=0\}$ that
	is a list of $\mathcal{P}$-positions whose nim-sum is not zero. The output is an empty set. 
	
\noindent
	$(b)$ $\{(x,y,z):x,y,z\in Z_{\geq 0},x \leq 20, y \leq 20, z \leq 20, y \leq \lfloor \frac{x+z}{k} \rfloor$, where $ (x,y,z)$ is $\mathcal{N}$-position$\} \cap \{(x,y,z):x,y,z\in Z_{\geq 0}, x \leq 20, y \leq 20, z \leq 20, y \leq \lfloor \frac{x+z}{k} \rfloor$ and $x\oplus y \oplus z=0\}$
	that is a list of $\mathcal{N}$-positions whose nim-sum is zero.
	The output is an empty set. 
	
	\begin{small}
		\begin{verbatim}
		x:=20;
		z:=20;
		y:=20;
		k:=3;
		setA:={};
		setB:={};
		for z1 from 0 to z do
			for x1 from 0 to x do
				for y1 from 0 to y.Min(((z1+x1)/k).Floor) do
					if examples.Choco3D(x1,y1,z1,k).CanonicalForm== 0 then
						if *x1+*y1+*z1!=0 then
							setA.Add([x1,y1,z1]);
						end
					else
						if *x1+*y1+*z1==0 then
							setB.Add([x1,y1,z1]);
						end
					end
				end
			end
		end
		Worksheet.Print("The Set of (Grundy Number = 0 and Nim-Sum > 0) -> " 
		+ setA.ToString);
		Worksheet.Print("The Set of (Grundy Number > 0 and Nim-Sum = 0) -> " 
		+ setB.ToString);
		\end{verbatim}
	\end{small}
	
\begin{figure}[!htb]
	\centering
	\includegraphics[width=0.8\columnwidth,bb=0 0 380 42]{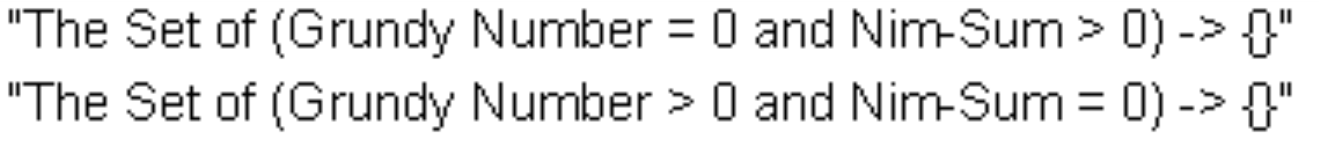}
	\caption{Result (1)}
	\label{CGSuiteResult1}
\end{figure}
	
Since the list in $(a)$ is empty, we have $x \oplus y \oplus z =0$ for any $\mathcal{P}$-position $(x,y,z)$.

Since the list in $(b)$ is empty, we have $x \oplus y \oplus z \ne 0$ for any $\mathcal{N}$-position $(x,y,z)$.

Therefore, $(x,y,z)$ is a $\mathcal{P}$-position if and only if $x \oplus y \oplus z = 0$.
\end{exam}

\subsection{Computer Program to Compare the Grundy Numbers and Nim-Sum of Positions in the Chocolate Bar Game}
In this subsection, we present computer programs that compare the Grundy number 
$\mathcal{G}((x,y,z))$ and $x\oplus y \oplus z$ for a position $(x,y,z)$.
Example \ref{exambymathematica2} presents a Mathematica program, and Example \ref{examplecgsuitet2} presents a CGSuite program.

\begin{exam}\label{exambymathematica2}
	Here, let $k=3$. This Mathematica program
	calculates the list $\mathcal{G}((x,y,z)):x,y,z\in Z_{\geq 0},x \leq 20, y \leq 20, z \leq 20, y \leq \lfloor \frac{x+z}{k} \rfloor\}$.
	\begin{verbatim}
	k=3;ss=20;al=
	Flatten[Table[{a,b,c},{a,0,ss},{b,0,ss},{c,0,ss}],2];
	allcases=Select[al,(1/k)(#[[1]]+#[[3]])>=#[[2]] &];
	move[z_]:=Block[{p},p=z;
	Union[Table[{t1,Min[Floor[(1/k)(t1+p[[3]])],p[[2]]],
	p[[3]]},{t1,0,p[[1]]-1}],
	Table[{p[[1]],t2,p[[3]]},{t2,0,p[[2]]-1}],
	Table[{p[[1]],Min[Floor[(1/k)(t3+p[[1]])],p[[2]]],t3},
	{t3,0,p[[3]]-1}]]]
	Mex[L_]:=Min[Complement[Range[0,Length[L]],L]];
	Gr[pos_]:=Gr[pos]=Mex[Map[Gr,move[pos]]]
	pposition=Select[allcases,Gr[#]==0 &];
	\end{verbatim}	
	
	\begin{verbatim}	
	nimequal=Select[allcases,BitXor[#[[1]],#[[2]],#[[3]]]==Gr[#] &]
	//Length
	\end{verbatim}
	The output of this code is the number of positions whose Grundy numbers are equal to nim-sum.
	\begin{verbatim}
	977
	\end{verbatim}	
	
	\begin{verbatim}
	nimnonequal=Select[allcases,!(BitXor[#[[1]],#[[2]],#[[3]]]
	==Gr[#]) &]//Length
	\end{verbatim}
	The output of this code is the number of positions whose Grundy numbers are not equal to nim-sum.
	\begin{verbatim}
	2257
	\end{verbatim}	
\end{exam}
	
\begin{exam}\label{examplecgsuitet2}
Here, let $k=3$. This CGSuite program
calculates the number of positions whose Grundy numbers are equal to nim-sum and the number of positions whose Grundy numbers are not equal to nim-sum. 

	\begin{small}
	\begin{verbatim}
		x:=20;
		z:=20;
		y:=20;
		k:=3;
		nimequal:=0;
		nimnonequal:=0;
		for z1 from 0 to z do
			for x1 from 0 to x do
				for y1 from 0 to y.Min(((z1+x1)/k).Floor) do
					if examples.Choco3D(x1,y1,z1,k).CanonicalForm==0 then
						if *x1+*y1+*z1==0 then
							nimequal:=nimequal+1;
						else
							nimnonequal:=nimnonequal+1;
						end
					else
						if examples.Choco3D(x1,y1,z1,k).CanonicalForm== *x1+*y1+*z1 then
							nimequal:=nimequal+1;
						else
							nimnonequal:=nimnonequal+1;		
						end
					end
				end
			end
		end
		Worksheet.Print("The Number of Grundy Number == Nim-Sum -> " 
		+ nimequal.ToString);
		Worksheet.Print("The Number of Grundy Number != Nim-Sum -> " 
		+ nimnonequal.ToString);
	\end{verbatim}
	\end{small}
	
\begin{figure}[!htb]
	\centering
	\includegraphics[width=0.8\columnwidth,bb=0 0 371 44]{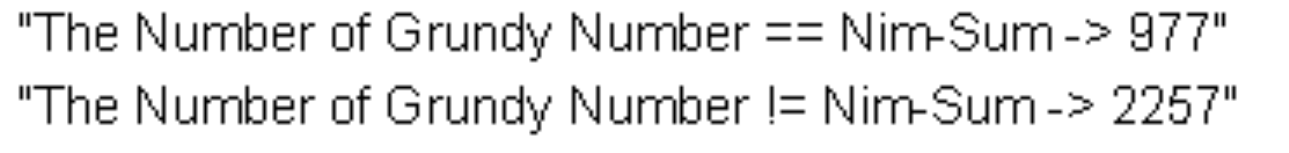}
	\caption{Result (2)}
	\label{CGSuiteResult2}
\end{figure}	
\end{exam}

\subsection{Computer Program to Calculate the $\mathcal{P}$-positions in the Chocolate Bar Game When {\boldmath $4m+1$} for Some {\boldmath $m \in Z_{\geq 0}$}}\label{4mplus1}
Let $k=4m+1$. In this subsection, we present computer programs that show 
$\{(x,y,z);x,y,z\in Z_{\geq 0}, y \leq \lfloor \frac{x+z}{k} \rfloor$ and $(x-1)\oplus y \oplus (z-1)=0\}$ is the set of $\mathcal{P}$-positions when $k=4m+1$ for some $m \in Z_{\geq 0}$.

Example \ref{exambymathematicafor5} presents a Mathematica program, and 
Example \ref{exambycgsuitefor5} presents a Combinatorial Game Suite (CGSuite) program.

\begin{exam}\label{exambymathematicafor5}
	Here, let $k=5$. 
	
	\noindent
	$(i)$ This Mathematica program
	presents the list $\{(x,y,z):x,y,z\in Z_{\geq 0},x \leq 20, y \leq 10, z \leq 20, y \leq \lfloor \frac{x+z}{k} \rfloor$, where $(x,y,z)$ is $\mathcal{P}$-position$\} - \{(x,y,z):x,y,z\in Z_{\geq 0},x \leq 20, y \leq 10, z \leq 20, y \leq \lfloor \frac{x+z}{k} \rfloor$ and $(x-1)\oplus y \oplus (z-1)=0\}$ that
	is a list of $\mathcal{P}$-positions $(x,y,z)$ such that $(x-1)\oplus y \oplus (z-1) \ne 0$.
	
	\begin{verbatim}
	k = 5; al = 
	Flatten[Table[{a,b,c},{a,0,20},{b,0,10},{c,0,20}],2];
	allcases=Select[al,(1/k)(#[[1]]+#[[3]])>=#[[2]] &];
	move[z_]:=Block[{p},p=z;
	Union[Table[{t1,Min[Floor[(1/k)(t1+p[[3]])],p[[2]]],
	p[[3]]},{t1,0,p[[1]]-1}],
	Table[{p[[1]],t2,p[[3]]},{t2,0,p[[2]]-1}],
	Table[{p[[1]],Min[Floor[(1/k)(t3+p[[1]])],p[[2]]],t3},
	{t3,0,p[[3]]-1}]]]
	Mex[L_]:=Min[Complement[Range[0,Length[L]],L]];
	Gr[pos_]:=Gr[pos]=Mex[Map[Gr,move[pos]]]
	pposition=Select[allcases,Gr[#]==0 &];
	Select[pposition,BitXor[#[[1]]-1,#[[2]],#[[3]]-1]>0 &]
	\end{verbatim}
	The output shows that the list is empty, which implies that the nim-sum $(x-1)\oplus y \oplus (z-1)=0$ for any $\mathcal{P}$-position $(x,y,z)$.
	\begin{verbatim}
	{}
	\end{verbatim}
	
	\noindent
	$(ii)$ The next Mathematica program presents the list $\{(x,y,z):x,y,z\in Z_{\geq 0},x \leq 20, y \leq 10, z \leq 20, y \leq \lfloor \frac{x+z}{k} \rfloor$, where $ (x,y,z)$ is $\mathcal{N}$-position$\} \cap $
	$\{(x,y,z):x,y,z\in Z_{\geq 0},x \leq 20, y \leq 10, z \leq 20, y \leq \lfloor \frac{x+z}{k} \rfloor$ and $(x-1)\oplus y \oplus (z-1)=0\}$
	that is a list of $\mathcal{N}$-positions $(x,y,z)$ such that $(x-1)\oplus y \oplus (z-1)=0$.
	\begin{verbatim}
	Select[Complement[allcases,pposition],
	BitXor[#[[1]]-1,#[[2]],#[[3]]-1]==0 &]
	\end{verbatim}
	This produces the following list.
	\begin{verbatim}
	{}
	\end{verbatim}
	
	The output shows that the list is empty, which implies that the nim-sum of a $\mathcal{N}$-position is not zero.\\
	By $(i)$ and $(ii)$, $(x,y,z)$ is a $\mathcal{P}$-position if and only if $(x-1)\oplus y \oplus (z-1)=0$.
\end{exam}

\begin{exam}\label{exambycgsuitefor5}
	Here, let $k=5$. \\
	$(i)$ This $($CGSuite version1.1.1$)$ program shows 
	$\{(x,y,z);x,y,z\in Z_{\geq 0},y \leq \lfloor \frac{x+z}{k} \rfloor$ and $(x-1)\oplus y \oplus (z-1)=0\}$ is the set of $\mathcal{P}$-positions.
	
	First, we open the code in $(i)$ of Example \ref{exambycgsuite}.
	
	\noindent
	$(ii)$
	By typing the following command, we get the list $\{(x,y,z):x,y,z\in Z_{\geq 0},x \leq 20, y \leq 10, z \leq 20, y \leq \lfloor \frac{x+z}{k} \rfloor, $ where $(x,y,z)$ is $\mathcal{P}$-position$\} - \{(x,y,z):x,y,z\in Z_{\geq 0},x \leq 20, y \leq 10, z \leq 20, y \leq \lfloor \frac{x+z}{k} \rfloor$ and $(x-1)\oplus y \oplus (z-1)=0\}$ that
	is a list of $\mathcal{P}$-positions $(x,y,z)$ such that $(x-1)\oplus y \oplus (z-1) \ne 0$. The output is an empty set.
	
	By typing the following command, we get the list $\{(x,y,z):x,y,z\in Z_{\geq 0},x \leq 20, y \leq 10, z \leq 20, y \leq \lfloor \frac{x+z}{k} \rfloor$, where $ (x,y,z)$ is $\mathcal{N}$-position$\} \cap \{(x,y,z):x,y,z\in Z_{\geq 0},x \leq 20, y \leq 10, z \leq 20, y \leq \lfloor \frac{x+z}{k} \rfloor$ and $(x-1)\oplus y \oplus (z-1)=0$
	that is a list of $\mathcal{N}$-positions $(x,y,z)$ such that $(x-1)\oplus y \oplus (z-1)=0$.
	The output is an empty set.
	
	\begin{small}
		\begin{verbatim}
		x:=20;
		z:=20;
		y:=10;
		k:=5;
		setA:={};
		setB:={};
		for z1 from 0 to z do
			for x1 from 0 to x do
				for y1 from 0 to y.Min(((z1+x1)/k).Floor) do
					if examples.Choco3D(x1,y1,z1,k).CanonicalForm==0 then
						if ((x1-1).NimSum(y1)).NimSum(z1-1)>0 then
							setA.Add([x1,y1,z1]);
						end
					else
						if ((x1-1).NimSum(y1)).NimSum(z1-1)==0 then
							setB.Add([x1,y1,z1]);
						end
					end
				end
			end
		end
		Worksheet.Print("The list of P-positions and Nim-Sum > 0 -> " 
		+ setA.ToString);
		Worksheet.Print("The list of N-positions and Nim-Sum = 0 -> " 
		+ setB.ToString);
		\end{verbatim}
	\end{small}

\begin{figure}[!htb]	
	\centering
	\includegraphics[width=0.8\columnwidth,bb=0 0 319 45]{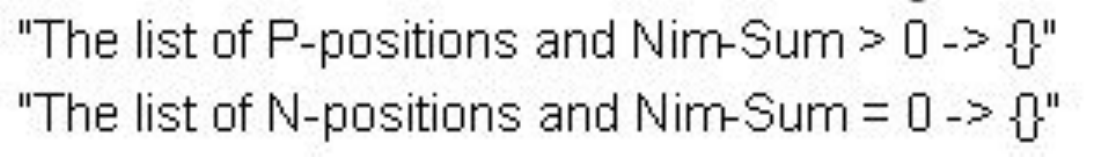}
	\caption{Result (3)}
	\label{CGSuiteResult1for5}
\end{figure}
\end{exam}

\begin{conjecture}
When $k = 4m+1$ for some $m \in Z_{ \geq 0}$,
$(x,y,z)$ is a $\mathcal{P}$-position if and only if $(x-1)\oplus y \oplus (z-1)=0$.
\end{conjecture}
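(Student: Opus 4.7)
The plan is to mirror the argument developed in Sections \ref{studyofsequences}--\ref{relation1}, using the substitution $X = x-1$, $Z = z-1$ (valid for $x, z \geq 1$) to reduce the conjectured $\mathcal{P}$-position condition $(x-1)\oplus y \oplus (z-1) = 0$ to the familiar form $X \oplus y \oplus Z = 0$. With this change of variables, I would define an analogue of the generating sequence in Definition \ref{defnofsequencesj}, retain the same three update functions $P_1^k, P_2^k, P_3^k$ (now evaluated at $k = 4m+1$), and classify the resulting sequences via the Type 1/Type 2/Type 3 trichotomy of Definition \ref{typeabc}.

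The second step is to re-prove the arithmetic core, namely the $k=4m+1$ analogues of Lemmas \ref{lemmaforwhichk}--\ref{lemmaforwhichk3}, \ref{nimsum0lemma}, and \ref{lammeofkyxz12}, and to deduce the corresponding versions of Theorems \ref{theoremfor4mplus3a} and \ref{theoremfor4mplus3b}. Once those are in hand, the transition statements $A \to B$ and $B \to A$ of Theorems \ref{thforkfrAtoB} and \ref{thforkfrBtoA} should go through almost verbatim, with $A_k$ and $B_k$ replaced by their $(x-1)\oplus y\oplus (z-1)$ counterparts, and the full $\mathcal{P}/\mathcal{N}$ characterization will follow by the same inductive bookkeeping used for Theorem \ref{theoremforkyxzchocol}. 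Boundary cases with $x = 0$ or $z = 0$ will need a small separate treatment, since then one of $x-1$ or $z-1$ is negative and the conjecture must be given an interpretation; the natural convention is to declare such positions to be $\mathcal{N}$-positions whenever they differ from the terminal $(0,0,0)$, which matches the move structure directly.

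The principal obstacle I foresee is that the clean trichotomy of Lemma \ref{lemmaforwhichk} breaks down when $k=4m+1$. For $k=4m+3$ the half-point $(k-1)/2 = 2m+1$ is odd and therefore unreachable by even sequence values, so the even $h \in [0,k)$ split cleanly between the regime where $P_1^k(h),P_2^k(h)\in[0,k)$ and $P_3^k(h)<0$, and the regime where only $P_3^k(h)\in[0,k)$. For $k=4m+1$ the corresponding half-point $(k-1)/2 = 2m$ is itself an even number, and one computes $P_2^k(2m) = 4m \in [0,k)$ together with $P_3^k(2m) = 0 \in [0,k)$, so two distinct update rules can keep the sequence in range and the dichotomy collapses. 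The role of the $-1$ offsets in the conjecture should be to introduce, when the binary expansions of $x-1$ and $z-1$ are unwound, carries whose cumulative effect resolves this ambiguity and singles out a unique admissible sequence. Making that carry analysis precise, and in particular recovering the uniqueness half of Lemma \ref{nimsum0lemma} in this shifted setting, is where I expect the bulk of the technical effort to concentrate.
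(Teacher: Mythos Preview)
The paper does not prove this statement; it is explicitly stated as a \emph{conjecture} and the only supporting evidence in the paper is computational (the Mathematica and CGSuite runs in Subsection~\ref{4mplus1}). There is therefore no proof in the paper against which to compare your proposal.

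That said, your plan is a sensible attack, and you have already put your finger on the genuine obstruction. For $k=4m+3$ the dichotomy of Lemma~\ref{lemmaforwhichk} works because the midpoint $(k-1)/2=2m+1$ is odd and hence never a value of the even sequence $\{s_j\}$; this is exactly what forces, at each step, a \emph{unique} choice of $P_i^k$ keeping the sequence in $[0,k)$, and that uniqueness is what drives Lemma~\ref{nimsum0lemma} and hence Lemma~\ref{lammeofkyxz12}. For $k=4m+1$ the midpoint $2m$ is even and reachable, and at $h=2m$ one has $P_2^k(2m)=4m\in[0,k)$ and $P_3^k(2m)=0\in[0,k)$ simultaneously, so two continuations remain in range and the uniqueness argument collapses. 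Your hope that the $-1$ shifts on $x$ and $z$ generate borrow/carry effects that restore uniqueness is plausible, but as written the proposal does not carry this out: you would need a concrete replacement for Lemma~\ref{lemmaforwhichk} (and hence for Lemmas~\ref{lemmaforwhichk2}--\ref{nimsum0lemma}) that accounts for the shifted variables, and until that is done the analogues of Theorems~\ref{theoremfor4mplus3a} and~\ref{theoremfor4mplus3b} remain unproved. Your treatment of the boundary $x=0$ or $z=0$ also needs care: the paper's own computer check relies on Mathematica's two's-complement semantics for \texttt{BitXor} on negative arguments, under which $(-1)\oplus y\oplus(z-1)$ vanishes only at $(0,0,0)$ among positions with $x=0$, so the convention you propose is consistent with the numerics but should be justified combinatorially.
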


\subsection{Computer Program to Calculate the $\mathcal{P}$-positions in the Chocolate Bar Game for Some Even Number {\boldmath $k$}}\label{evencase}

In this subsection, we present computer programs that present $\mathcal{P}$-positions of chocolate game when $k$ is an even number.

Example \ref{exambymathematicafor2} presents a Mathematica program, and 
Example \ref{exambycgsuitefor2} presents a Combinatorial Game Suite (CGSuite) program. 

\begin{exam}\label{exambymathematicafor2}
	Here, let $k=2$. This Mathematica program
	presents the list $\{(x,y,z):x,y,z\in Z_{\geq 0},x \leq 10, y \leq 10, z \leq 10, y \leq \lfloor \frac{x+z}{k} \rfloor$, where $(x,y,z)$ is a $\mathcal{P}$-position$\}$.
	
\begin{verbatim}
	k=2;ss=10;al=
	Flatten[Table[{a,b,c},{a,0,ss},{b,0,ss},{c,0,ss}],2];
	allcases=Select[al,(1/k)(#[[1]]+#[[3]])>=#[[2]] &];
	move[z_]:=Block[{p},p=z;
	Union[Table[{t1,Min[Floor[(1/k)(t1+p[[3]])],p[[2]]],
	p[[3]]},{t1,0,p[[1]]-1}],
	Table[{p[[1]],t2,p[[3]]},{t2,0,p[[2]]-1}],
	Table[{p[[1]],Min[Floor[(1/k)(t3+p[[1]])],p[[2]]],t3},
	{t3,0,p[[3]]-1}]]]
	Mex[L_]:=Min[Complement[Range[0,Length[L]],L]];
	Gr[pos_]:=Gr[pos]=Mex[Map[Gr,move[pos]]]
	pposition=Select[allcases,Gr[#]==0 &]
\end{verbatim}

	By the following output, there seems to be no generic formula for $ \mathcal{P}$-position.
	
\begin{verbatim}
{(0,0,0),(1,0,1),(1,1,2),(2,0,2),(2,1,1),(3,0,3),(3,1,4),
(3,2,5),(3,3,6),(3,4,7),(3,5,8),(4,0,4),(4,1,3),(4,2,6),
(4,3,5),(4,4,8),(4,5,7),(5,0,5),(5,1,6),(5,2,3),(5,3,4),
(5,4,9),(5,5,10),(6,0,6),(6,1,5),(6,2,4),(6,3,3),(6,4,10),
(6,5,9),(7,0,7),(7,1,8),(7,2,9),(7,3,10),(7,4,3),(7,5,4),
(8,0,8),(8,1,7),(8,2,10),(8,3,9),(8,4,4),(8,5,3),(9,0,9),
(9,1,10),(9,2,7),(9,3,8),(9,4,5),(9,5,6),(10,0,10),
(10,1,9),(10,2,8),(10,3,7),(10,4,6),(10,5,5)}
\end{verbatim}
\end{exam}

\begin{exam}\label{exambycgsuitefor2}
Here, let $k=2$. This $($CGSuite version1.1.1$)$ program presents the list $\{(x,y,z):x,y,z\in Z_{\geq 0},x \leq 10, y \leq 10, z \leq 10, y \leq \lfloor \frac{x+z}{k} \rfloor$, where $(x,y,z)$ is a $\mathcal{P}$-position$\}$.
First, we open the code in $(i)$ of Example \ref{exambycgsuite}.
By typing the following command, we get the list $\{(x,y,z):x,y,z\in Z_{\geq 0},x \leq 10, y \leq 10, z \leq 10, y \leq \lfloor \frac{x+z}{k} \rfloor$, where $(x,y,z)$ is a $\mathcal{P}$-position$\}$. 

By the following output, there seems to be no generic formula for $ \mathcal{P}$-position.

	\begin{small}
	\begin{verbatim}
	x:=10;
	z:=10;
	y:=10;
	k:=2;
	setA:={};
	for z1 from 0 to z do
		for x1 from 0 to x do
			for y1 from 0 to y.Min(((z1+x1)/k).Floor) do
				if examples.Choco3D(x1,y1,z1,k).CanonicalForm==0 then
					setA.Add([x1,y1,z1]);
				end
			end
		end
	end
	Worksheet.Print(setA.ToString);
	\end{verbatim}
\end{small}

\begin{figure}[!htb]
	\centering
	\includegraphics[width=0.9\columnwidth,bb=0 0 507 102]{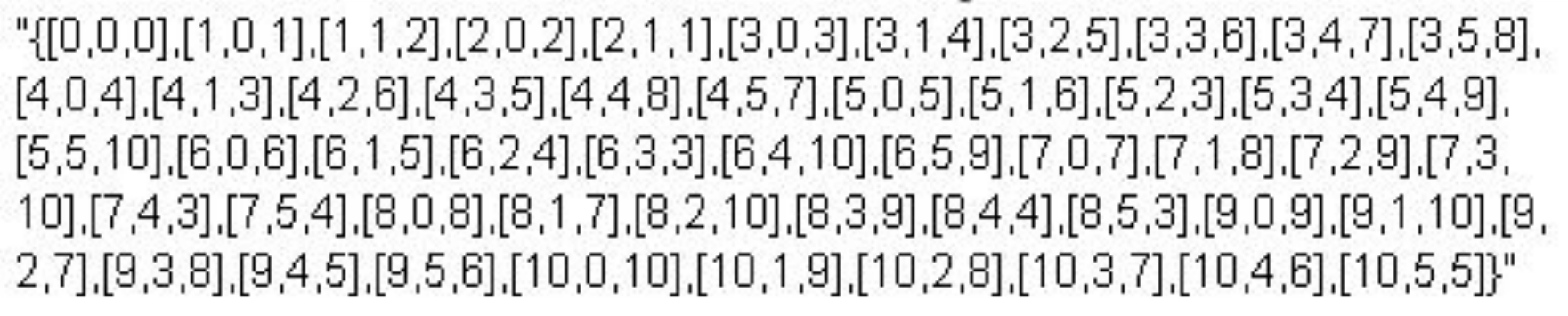}	
	\label{CGSuiteResult1for2}
	\caption{Result (4)}
\end{figure}
\end{exam}

\end{document}